\def\eps{\varepsilon}
\def\sinc{\mathrm{sinc}}
\def\pdt2{\partial_t^2}
\def\pdx2{\partial_x^2}
\newcommand{\norm}[1]{\left\Vert#1\right\Vert}
\newcommand{\normmm}[1]{{\left\vert\kern-0.25ex\left\vert\kern-0.25ex\left\vert #1
    \right\vert\kern-0.25ex\right\vert\kern-0.25ex\right\vert}}
\newcommand{\abs}[1]{\left\vert#1\right\vert}
\def\RR{{\mathbb{R}}}
\def\ZZ{{\mathbb{Z}}}
\def\ii{\mathrm{i}}
\newtheorem{mydef}{Definition}
\newtheorem{mytheo}{Theorem}
\newtheorem{cor}{Corollary}
\newtheorem{lem}{Lemma}
\newtheorem{assum}{Assumption}
\newtheorem{prop}{Proposition}
\newtheorem{algo}{Algorithm}
\newtheorem{rem}{Remark}
\def\ii{\textmd{i}}
\def\RR{\mathbb{R}}
\def\ZZ{\mathbb{Z}}
\def\aa{\mathbf{a}}
\def\bb{\mathbf{b}}
\def\AA{\mathbf{A}}
\def\BB{\mathbf{B}}
\def\cc{\mathbf{c}}
\def\vv{\mathbf{v}}
\def\zz{\mathbf{z}}
\def\FF{\mathbf{F}}
\def\NN{\mathbf{N}}
\def\www{\mathbf{w}}
\def\WW{\boldsymbol{\Omega}}
\def\MM{\mathcal{M}}
\def\sinc{\mathrm{sinc}}
\def\no{\noindent}
\title{Optimal convergence and long-time conservation of exponential integration for Schr\"{o}dinger equations in a normal or highly oscillatory regime}
\author{Bin Wang\,\footnote{School of Mathematics and Statistics,
Xi'an Jiaotong University, Xi'an, Shannxi   710049, P.R.China.
E-mail:~{\tt wangbinmaths@xjtu.edu.cn}} \and Yaolin Jiang
\thanks{School of Mathematics and Statistics,
Xi'an Jiaotong University, Xi'an, Shannxi   710049, P.R.China. E-mail:~{\tt
yljiang@mail.xjtu.edu.cn}}}
\begin{document}
\maketitle

\begin{abstract}
In this paper, we formulate and analyse exponential integrations when applied to nonlinear Schr\"{o}dinger equations in a normal or highly oscillatory regime. A kind of exponential integrators with energy  preservation, optimal convergence and long time
near conservations of actions, momentum and density will be formulated and
analysed. To this end, we derive continuous-stage exponential integrators and show that the integrators can exactly preserve the
energy of Hamiltonian systems. Three practical energy-preserving integrators are presented. It is shown that these
integrators  exhibit optimal convergence and have  near conservations of actions, momentum and
density over long times. A numerical experiment is carried out to support all the theoretical
results presented in this paper. Some applications of the integrators to other kinds of ordinary/partial differential equations are also presented.
\medskip

\no{Keywords:} Schr\"{o}dinger equations; exponential integration; energy-preserving methods; optimal convergence; modulated Fourier
expansion

\medskip
\no{MSC:} 65P10, 65M70.

\end{abstract}

%=============================================================================================

\section{Introduction}\label{sec:introduction}
The main aim of this paper is to present the {formulation and analysis of
exponential integration when
applied to the nonlinear Schr\"{o}dinger equation (NSE) with periodic boundary conditions} (see \cite{Chartier15,Chartier16})
\begin{equation}\label{sch system}
\left\{\begin{aligned} &\mathrm{i}u_{t}(t,x)=-{\frac{1}{\varepsilon}}\triangle u
(t,x)+\lambda|u(t,x)|^{2}u(t,x),\ \ (t,x)\in[0,T]\times
[-\pi,\pi]^d,\\ & u(0,x)=u^0(x),\ \ \ \ \ \ x\in [-\pi,\pi]^d,
\end{aligned}\right.
\end{equation}
  where  $\lambda$ is a parameter and
 {$\varepsilon$ determines the regime of the solution. In this paper, we consider two different regimes: the normal regime $\varepsilon=1$ and the  highly oscillatory regime $0 < \varepsilon\ll1$ which means that the solution is  highly oscillatory.} It is known that the solution of this
equation exactly conserves the following energy
\begin{equation}\label{H}
H[u,\bar{u}]=\frac{1}{2(2 \pi)^d}\int_{[-\pi,\pi]^d}\Big({\frac{1}{\varepsilon}} |\nabla u|^2
+\frac{1}{2}\lambda|u|^4\Big)dx,
\end{equation}
where $|\cdot|$ denotes the Euclidean norm. Apart from this, the
solution also has the conservations of the momentum
\begin{equation}\label{momentum1}
K[u,\bar{u}]=\ii\frac{1}{(2 \pi)^d}\int_{[-\pi,\pi]^d}(u \nabla
\bar{u}-\bar{u}\nabla u)dx,
\end{equation}
  and of the density or mass
\begin{equation}\label{momentum}
m[u,\bar{u}]=\ii\frac{1}{(2 \pi)^d}\int_{[-\pi,\pi]^d}|u|^2dx.
\end{equation}
 For the    linear Schr\"{o}dinger equation, its solution exactly conserves the
actions
\begin{equation}\label{actions}
I_j(u,\bar{u})= \frac{1}{2}\abs{u_j}^2,\qquad j\in \ZZ^d,
\end{equation}
where $u_j$ is defined by  $u(t,x)=\sum\limits_{j\in
\ZZ^d}u_j(t)e^{\ii(j\cdot x)}$ with $j\cdot x=j_1x_1+\cdots+j_dx_d$.
For nonlinear equation \eqref{sch system}, it has been shown that
 these actions are
approximately conserved over long times under conditions of small
initial data and non-resonance  (see \cite{18,19}).
In this paper, only cubic  Schr\"{o}dinger equation with $x \in
[-\pi,\pi]^d$ is considered
for  brevity, although all our ideas, algorithms and analysis can be
easily extended to the solutions of other NSEs.

As is known, NSEs  often arise in a wide range
of applications such as in  fiber optics,  physics, quantum
transport and other applied sciences, and we refer the reader to
\cite{Faou12,jin11,Lubich2020}. In order to
effectively solve NSEs,   various numerical
methods   have been developed and researched in recent decades. With
regard to some related methods of this topic, we refer the reader to
exponential-type integrators (see, e.g.
\cite{Berland07,Besse17,Cano15,Celledoni08,12,Dujardin09,wang-2016}),
splitting methods (see, e.g.
\cite{Iserles14,Besse02,Chartier16,Eilinghoff16,19,Lubich08,Thalhammer12}),
multi-symplectic methods (see, e.g. \cite{Berland07}),
Fourier integrators (see, e.g.
\cite{Faou14,Ostermann19,Ostermann17}), waveform relaxation
algorithms  (see, e.g. \cite{Gander-19})  and other effective
methods (see, e.g.
\cite{Balac16,Bao14,Bejenaru06,Germain09,Islas01,Kishimoto09}).

In the last two decades,  {structure-preserving
algorithms of Hamiltonian partial differential equations (PDEs) have
also been received
 much attention  and  we refer to
\cite{Bhatt17,hairer2006,Islas01,wubook2018}.
Amongst the typical subjects of structure-preserving algorithms  are energy-preserving
(EP) schemes (see, e.g.
 \cite{Dahlby10,Sonnend15,Gong18,jiang20,Shen19,Sonnend14,IMA2018,Wang2019}).} One important property of EP methods is
that they can exactly preserve the energy of the considered system.
 On the other hand, long-time conservation properties of different methods
when applied to  Hamiltonian systems have been researched in many
research publications (see, e.g.
\cite{12,18,19,lubich19,hairer2006}). All
the long-time analyses can be achieved by using the technique of
modulated Fourier expansions, which was developed by Hairer and
Lubich in \cite{Hairer00}.

With regard to the existing researches on these two topics for
Schr\"{o}dinger equations, we have comments as follows:

a) Concerning EP methods for NSEs, although the
average vector field  method (see \cite{Celledoni12}) and
Hamiltonian Boundary Value Methods (see \cite{Brugnano-18}) were
considered, exponential EP methods have not been studied well
for Schr\"{o}dinger equations in the literature. Recently, the
authors in \cite{Wang2019new} derived a kind of exponential
collocation methods, but the energy conservation only holds under
some special conditions. Exponential structure-preserving Runge-Kutta
methods have been studied in \cite{Bhatt17} for first-order ODEs and
the methods are shown to exactly preserve conformal symplecticity
and  decay (or growth) rates in linear and quadratic invariants.
However, energy-preserving exponential Runge-Kutta   methods have
not been considered there. Exponential EP  integrators as well as their convergence have
not been established rigorously for NSEs.

b) For the long time analysis of numerical methods applied to
NSEs, there have also been many publications,
and we refer the reader to
 \cite{12,17-Gauckler,18,19}. Unfortunately, however, all the methods described in these publications are not EP methods.
Too little attention has been paid to the long term analysis of EP
methods in other qualitative aspects for solving NSEs in the literature.

The above facts motivate this paper and the  main contributions will be made as follows:

A) By using the idea of continuous-stage methods, we formulate a
kind of exponential  integration. This formulation will provide novel energy-preserving
methods and this will be discussed in detail in  Sect. \ref{sec:method}.

B) For the obtained EP methods, we analyze their optimal convergence for the first time.   We prove by using the averaging technique \cite{Chartier16}, that some schemes exhibit improved error bounds for highly oscillatory NSEs (Sect. \ref{sec:convergence}).

C) It is also shown that these EP
integrators have near conservations of actions, momentum and
density   over long times by using modulated Fourier expansions (Sect.
\ref{sec:long time con}).  %To our knowledge, this is the first work
%that investigates the long-time behaviour of EP
%integrators on Schr\"{o}dinger equations by using modulated Fourier
%expansions.

After these steps,  a novel kind of exponential  integration with energy  preservation, optimal convergence  and long time
near conservations of actions, momentum and
density   is obtained.
 All the theoretical results presented in
this paper will be supported numerically by a numerical experiment
 carried out in Sect. \ref{sec:num exp}. The last section concerns
 some applications of the integrators and some issues which will be
 studied further.

%%%%%%%%%%%%%%%%%%%%%%%%%%%%%%%%%%%%%%%%%%%%%%%%%%%%%%%%%%%%%%%%%%%%%%%%%%%%%%%%%%%%%%%%%%%%%%%%%%%%%%%%%%%%%%%%%%%%%%%%%%%%%%%%%%%%%%%%%%%%%%%%%%%%%%%%%%%%%%%%%%%%%%%%%%%%%%%%%%%%%%%%%%%%%%%%%%%%%%%%%

\section{Energy-preserving  exponential integrators}\label{sec:method}
In order to derive energy-preserving exponential integrators, we
consider the simple but   classical way: Duhamel formulation of the equation and the discretization of the integral, which has been used in many publications (see, e.g. \cite{Bao14,Besse17,Bhatt17,Cano15,Celledoni08,12,Dujardin09,Hochbruck2010,Li_Wu(sci2016),Ostermann17}). Although this formulation is not new, the obtained methods will have some advantages    and we will make some important notes in Remark \ref{rem1} below.

Rewrite the NSE \eqref{sch system}  as
\begin{equation}\label{sch ode}
\begin{aligned}
&\frac{\partial u }{\partial t}(t,x)=\ii \mathcal{A}u(t,x)+
f(u(t,x)), \ \  \ \ u(0,x)=u^0(x),
\end{aligned}
\end{equation}
where $\mathcal{A}$ is   the   differential operator defined  by $
(\mathcal{A}u)(t,x)=  {\frac{1}{\varepsilon}}\triangle u(t,x)$ and $f(u)=-
\ii\lambda|u|^{2}u$. The Duhamel principle of this system gives
\begin{equation}\label{Duhamel formu}
\begin{aligned}
u(t_n+h,x)=e^{ \ii h \mathcal{A}}u(t_n,x)+  h \int_{0}^1
e^{(1-\xi)\ii h \mathcal{A}}f(u(t_n+\xi h,x))d\xi
\end{aligned}
\end{equation}
with the time stepsize $h$ and $t_n=nh$. Then we define  the operator-argument
functions $\varphi_j$ %(see \cite{Hochbruck2010})
by
\begin{equation}\label{series}
\varphi_0(\ii t\mathcal{A}):=e^{\ii t \mathcal{A}},\ \
\varphi_j(\ii t\mathcal{A}):=\int_{0}^{1} e^{\ii (1-\xi)t
\mathcal{A}}\frac{\xi^{j-1}}{(j-1)!}{\rm d}\xi ,\quad j=1,2,\dots.
\end{equation}

We deal with the integral appearing in
\eqref{Duhamel formu} by the idea of continuous-stage methods and
define the novel integrators as follows.

\begin{mydef} \label{def:CSEI}{({\textbf{Exponential time integrators.}})} For solving the NSE \eqref{sch system}, a continuous-stage
  exponential time integrator is defined as
follows:
\begin{equation}
\begin{aligned}
u^{n+\tau}(x)&=\Phi^{\tau h}(u^{n}(x)):=C_{\tau}(\mathcal{V})u^n(x)+ h \int_{0}^{1}A_{\tau,\sigma}(\mathcal{V})f(u^{n+\sigma}(x))d\sigma,\ \ 0\leq \tau\leq1,\ n=0,1,\ldots,
\end{aligned}\label{cs ei}%
\end{equation}
where $\mathcal{V}=\ii h\mathcal{A}$,  $C_{\tau}(\mathcal{V})$ and $A_{\tau,\sigma}(\mathcal{V})$ are bounded operator-argument
functions and   $C_{\tau}(\mathcal{V})$ is required to
satisfy
%\begin{equation}\begin{aligned}%\label{spe cond}
$C_{c_{j}}(\mathcal{V})= e^{c_{j}\mathcal{V}}\  \textmd{for}\
j=0, \ldots , s$
with the fitting nodes $c_{j}$ and $s\geq1$. It is required that  $c_0=0$ and
$c_{s}=1$. %For some special integrators,  $C_{\tau}(\mathcal{V})$ can be chosen as  $ e^{\tau\mathcal{V}}$, which means that $s$ can be sufficiently large.
 The numerical solution after one time stepsize $h$ is obtained by letting $\tau=1$ in \eqref{cs ei}.
\end{mydef}
\begin{rem}\label{rem1}
Although this exponential time integrator is formulated by the Duhamel formulation and the discretization of the integral, which is a very simple and  classical way, it is important to note that this scheme has the following advantages.
\begin{itemize}

  \item At the first sight, for a $p$-th order exponential integrator, it will produce errors of order $\mathcal{O}\big(\frac{h^p}{\eps^p}\big)$ when it is used to solving \eqref{sch system} with a time step size $h$.
However, for the scheme \eqref{cs ei} presented above, we will show  that some obtained methods exhibit improved error bounds such as $\mathcal{O}\big(\frac{h^2}{\eps}\big)$ or $\mathcal{O}\big(\frac{h^3}{\eps^2}\big)$.

  \item  We have noticed that some   novel methods with improved or uniform accuracy  have been presented (see, e.g. \cite{Bao14,Chartier15,Chartier16,Ostermann19,Ostermann17}). These methods have good even better convergence result than the methods given in this paper but they do not have   energy, actions, momentum and
density conservations. Based on the  scheme \eqref{cs ei}, we will obtain some  energy-preserving
exponential integrators with improved error bounds. We will also show that this scheme \eqref{cs ei} can provide  methods   with near conservations of actions, momentum and
density   over long times. In other words, the scheme \eqref{cs ei} can produce some practical methods with three properties simultaneously:
energy preservation, improved error bounds and near conservations of actions, momentum and
density.

\end{itemize}

\end{rem}

For the integrator  \eqref{cs ei}, its energy conservation property is shown as follows.

\begin{mytheo} \label{thm:ep} \textbf{(Energy-preserving  conditions.)}
Let  $\mathcal{K}=hJ\mathcal{M}$ with  $ \mathcal{M}=\left(
   \begin{array}{cc}
     \mathcal{A} & 0 \\
    0 & \mathcal{A} \\
   \end{array}
 \right)$ and $J=\left(
      \begin{array}{cc}
        0 & -1 \\
        1 & 0 \\
      \end{array}
    \right)$. If the coefficients of the   scheme   \eqref{cs ei}  satisfy
\begin{equation}
\left\{\begin{aligned} &A_{0,\sigma}(\mathcal{K})=\mathbf{0},\\
&(e^{\mathcal{K}})^\intercal  \mathcal{M} A_{1,\tau}(\mathcal{K}) \mathcal{K}+(C'_{\tau}(\mathcal{K}))^\intercal   \mathcal{M}=\mathbf{0},\\
&\mathcal{K}^\intercal  (A_{1,\tau}(\mathcal{K}))^\intercal
\mathcal{M} A_{1,\sigma}(\mathcal{K})
\mathcal{K}+\mathcal{M}A'_{\tau,\sigma}(\mathcal{K}) \mathcal{K}
+(\mathcal{M} A'_{\sigma,\tau} (\mathcal{K}) \mathcal{K})^\intercal   =\mathbf{0},\\
\end{aligned}\right.
\label{epcond}%
\end{equation}
with $ C_{\tau}^{'}(\mathcal{K})=\frac{d}{{\rm
d}\tau}C_{\tau}(\mathcal{K})$    and $
A'_{\tau,\sigma}(\mathcal{K})=\frac{\partial}{\partial\tau}A_{\tau,\sigma}(\mathcal{K}),$
 then  the   integrator  \eqref{cs ei} exactly preserves the energy \eqref{H}, i.e.,
$H[u^{n+1},\bar{u}^{n+1}]=H[u^{n},\bar{u}^{n}]$ for $ n=0,1,\ldots.$
\end{mytheo}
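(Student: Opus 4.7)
My plan is to recast the NSE as a real Hamiltonian system by writing $u = p + \ii q$ and $z = (p, q)^\intercal$, so that the scheme \eqref{cs ei} takes the vectorial form $z^{n+\tau} = C_\tau(\mathcal{K}) z^n + h\int_0^1 A_{\tau,\sigma}(\mathcal{K}) F(z^{n+\sigma})\, \mathrm{d}\sigma$, with $F(z) = -J\nabla U(z)$, where $U$ is the quartic part of the Hamiltonian and the total energy splits as $H = H_0 + U$ with $H_0$ quadratic. A key algebraic identity I will use repeatedly is $\mathcal{K}^\intercal \mathcal{M} + \mathcal{M}\mathcal{K} = 0$, which follows from $\mathcal{K} = hJ\mathcal{M}$, $J^\intercal = -J$, and the self-adjointness $\mathcal{M}^\intercal = \mathcal{M}$; exponentiating yields $(e^\mathcal{K})^\intercal \mathcal{M}\, e^\mathcal{K} = \mathcal{M}$, so that $e^\mathcal{K}$ is $\mathcal{M}$-isometric. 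This isometry is the engine of the conservation argument.

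Next I would decompose $H[z^{n+1}] - H[z^n] = (H_0[z^{n+1}] - H_0[z^n]) + (U[z^{n+1}] - U[z^n])$. For the quadratic part, I expand $\langle z^{n+1}, \mathcal{M} z^{n+1}\rangle$ using the scheme at $\tau = 1$; the $\mathcal{M}$-isometry of $e^\mathcal{K}$ cancels the pure $z^n z^n$ contribution, leaving a cross term with kernel $(e^\mathcal{K})^\intercal \mathcal{M} A_{1,\sigma}(\mathcal{K})$ and a quadratic-in-$F$ term with kernel $A_{1,\sigma}^\intercal \mathcal{M} A_{1,\rho}$, writing $F^\sigma := F(z^{n+\sigma})$. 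For the quartic part, the fundamental theorem of calculus together with $\nabla U = JF$ (from $F = -J\nabla U$ and $J^{-1} = -J$) gives $U[z^{n+1}] - U[z^n] = -\int_0^1 \langle F^\tau, J\dot z^{n+\tau}\rangle\, \mathrm{d}\tau$, into which I substitute the $\tau$-derivative $\dot z^{n+\tau} = C'_\tau(\mathcal{K}) z^n + h\int_0^1 A'_{\tau,\sigma}(\mathcal{K}) F^\sigma\, \mathrm{d}\sigma$ obtained by differentiating the scheme. The condition $A_{0,\sigma}(\mathcal{K}) = 0$, together with $C_0(\mathcal{K}) = I$ from the fitting node $c_0 = 0$, ensures the starting consistency $z^{n+0} = z^n$ and anchors the computation at $\tau = 0$.

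Collecting all pieces, $H[z^{n+1}] - H[z^n]$ reduces to a bilinear cross form in $(z^n, F^\sigma)$ plus a quadratic form in $(F^\tau, F^\sigma)$. Vanishing of the cross form for arbitrary data is equivalent to $h(e^\mathcal{K})^\intercal \mathcal{M} A_{1,\sigma}(\mathcal{K}) = (C'_\sigma(\mathcal{K}))^\intercal J$; right-multiplying by $\mathcal{K}$ and using the consequence $J\mathcal{K} = -h\mathcal{M}$ of the key identity produces exactly the second relation of \eqref{epcond}. Symmetrizing the quadratic-in-$F$ integrand in $(\tau, \sigma)$ and setting its symmetric part to zero gives $h^2 A_{1,\tau}^\intercal \mathcal{M} A_{1,\sigma} + hJA'_{\tau,\sigma}(\mathcal{K}) - h(A'_{\sigma,\tau}(\mathcal{K}))^\intercal J = 0$, which is equivalent to the third relation of \eqref{epcond} by the analogous manipulations using $\mathcal{K} = hJ\mathcal{M}$. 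The main obstacle I anticipate is the careful bookkeeping of transposes when pushing $J$ and $\mathcal{M}$ through the operator-argument functions; the identity $\mathcal{K}^\intercal \mathcal{M} = -\mathcal{M}\mathcal{K}$ must be applied systematically so that the three conditions emerge in exactly the displayed form. Justifying these manipulations in the infinite-dimensional setting is routine given the self-adjointness of $-\Delta$ under periodic boundary conditions.
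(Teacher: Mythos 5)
Your proposal is correct and follows essentially the same route as the paper: pass to the real Hamiltonian form, use the $\mathcal{M}$-isometry $(e^{\mathcal{K}})^\intercal\mathcal{M}e^{\mathcal{K}}=\mathcal{M}$ for the quadratic part, the fundamental theorem of calculus along the stages for the quartic part (anchored by $A_{0,\sigma}=\mathbf{0}$ and $C_0=I$), and symmetrization in $(\tau,\sigma)$ for the quadratic-in-$F$ term. The one step to phrase carefully is the direction of implication when you right-multiply by $\mathcal{K}$ (which is not invertible on the zero Fourier mode): the paper sidesteps this by writing the nonlinearity as $h^{-1}\mathcal{K}\tilde{g}$ with $\tilde{g}=\mathcal{M}^{-1}\nabla_y U$ from the outset, so that the kernels of the resulting bilinear forms are literally the left-hand sides of \eqref{epcond} and the assumed conditions apply directly in the sufficiency direction.
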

 \begin{proof}
By letting $u = p + \textmd{i}q,$  we rewrite the equation
\eqref{sch system}
 as a  infinite-dimensional real Hamiltonian system
\begin{equation}\label{new sch}
\begin{aligned}
\frac{\partial y }{\partial t}=J\mathcal{M}y+J\nabla_y U(y)\ \ \ \
y_0(x)= \left(
                       \begin{array}{c}
                        \textmd{Re}(u_0(x)) \\
                                 \textmd{Im}(u_0(x)) \\
                       \end{array}
                     \right),
\end{aligned}
\end{equation}
where $y=\left(
                             \begin{array}{c}
                               p \\
                               q \\
                             \end{array}
                           \right)$ and $U(y)=-\frac{\lambda}{4}(p^2+q^2)^2 .$
The energy  of this system  accordingly  becomes
 \begin{equation}\label{rea H}
\begin{aligned}
&\mathcal{H}(p,q) =\frac{-1}{2(2 \pi)^d}\int_{\mathbf{T}^d}\Big(
p \mathcal{A}p+q \mathcal{A}q-\frac{\lambda}{2}(p^2+q^2)^2 \Big)dx.\\
% =&\frac{1}{2} (\mathcal{A}p,p)+\frac{1}{2}
%(\mathcal{A}q,q)-\frac{\lambda}{4}  \int_{\mathbf{T}^d}
%  (p^2+q^2)^2 dx.
\end{aligned}
\end{equation}
Our continuous-stage
  exponential integrator \eqref{cs ei}  applying to \eqref{new sch}
  gives
\begin{equation}
\left\{\begin{aligned}
Y^{n+\tau}(x)&=C_{\tau}(\mathcal{K})y^n(x)+ h \int_{0}^{1}A_{\tau,\sigma}(\mathcal{K})g(Y^{n+\sigma}(x))d\sigma,\ \ 0\leq \tau\leq1,\\
y^{n+1}(x)&=e^{\mathcal{K}}y^n(x)+h \displaystyle\int_{0}^{1}A_{1,\tau}(\mathcal{K})  g(Y^{n+\tau}(x))d\tau,\\
\end{aligned}\right.\label{cs ei-real}%
\end{equation}
 where $g(y)=J\nabla_y U(y).$

 Inserting the numerical scheme \eqref{cs ei-real} into \eqref{rea H} yields
\begin{equation}
\begin{aligned}\label{H2}
 \mathcal{H}[y^{n+1} ]
%=&\frac{1}{2(2 \pi)^d}\int_{\mathbf{T}^d}\Big\{\frac{1}{2}(y^n)^\intercal   \mathcal{M} y^n
%+h(y^n)^\intercal (e^{\mathcal{K}})^\intercal \mathcal{M}I_1+\frac{1}{2}h^2 (I_1)^\intercal \mathcal{M}I_1+U(y^{n+1})\Big \}dx\\
=&\frac{-1}{2(2
\pi)^d}\int_{\mathbf{T}^d}\Big\{\frac{1}{2}(y^n)^\intercal
\mathcal{M} y^n
+(y^n)^\intercal (e^{\mathcal{K}})^\intercal \mathcal{M}\int_{0}^{1}A_{1,\tau}(\mathcal{K}) \mathcal{K} \tilde{g}(Y^{n+\tau})d\tau\\
&+\frac{1}{2}  \int_{0}^{1}\big(A_{1,\tau}(\mathcal{K}) \mathcal{K} \tilde{g}(Y^{n+\tau})\big)^\intercal d\tau \mathcal{M}\int_{0}^{1}A_{1,\tau}(\mathcal{K}) \mathcal{K} \tilde{g}(Y^{n+\tau})d\tau+U(y^{n+1})\Big \}dx,
\end{aligned}
\end{equation}
where $\tilde{g}=\mathcal{M}^{-1}\nabla_y U(y)$ and we have used the   result  $ (e^{\mathcal{K}})^\intercal \mathcal{M}e^{\mathcal{K}}=\mathcal{M}$ (see  \cite{Li_Wu(sci2016)}).
 It follows from the first condition of  \eqref{epcond}   that $Y^{n}=y^n$ and $ Y^{n+1}=y^{n+1}.$
Then one arrives at
\begin{equation*}
\begin{aligned}
&U(y^{n+1})-U(y^{n})
=\int_{0}^1 \big(\nabla_y U(Y^{n+\tau})\big)^\intercal dY^{n+\tau}\\
=& \int_{0}^1 \big(\nabla_y U(Y^{n+\tau})\big)^\intercal d\Big(C_{\tau}(\mathcal{K})y^n+ h \int_{0}^{1}A_{\tau,\sigma}(\mathcal{K})g(Y^{n+\sigma})d\sigma\Big)\\
=&(y^n)^\intercal \int_{0}^1 (C'_{\tau}(\mathcal{K}))^\intercal   \mathcal{M} \tilde{g}(Y^{n+\tau})d\tau + \int_{0}^{1} \int_{0}^{1}
 \big(\tilde{g}(Y^{n+\tau})\big)^\intercal \mathcal{M} A'_{\tau,\sigma}(\mathcal{K})\mathcal{K}\tilde{g}(Y^{n+\sigma})d\tau d\sigma.
\end{aligned}
\end{equation*}
Therefore,  using the above results and the second   condition of  \eqref{epcond}, we obtain
\begin{equation*}
\begin{aligned}%\label{H3}
&\mathcal{H}[y^{n+1} ]-\mathcal{H}[y^{n} ]\\
=&\frac{-1}{2(2 \pi)^d}\int_{\mathbf{T}^d}\frac{1}{2}\int_{0}^{1} \int_{0}^{1} \big(\tilde{g}(Y^{n+\tau})\big)^\intercal \Big\{(\mathcal{K})^\intercal  (A_{1,\tau}(\mathcal{K}))^\intercal \mathcal{M} A_{1,\sigma}(\mathcal{K}) \mathcal{K}+2\mathcal{M}A'_{\tau,\sigma}(\mathcal{K}) \mathcal{K}\Big \} \tilde{g}(Y^{n+\sigma})d\tau d\sigma dx\\
=&\frac{-1}{2(2 \pi)^d}\int_{\mathbf{T}^d}\frac{1}{2}\int_{0}^{1} \int_{0}^{1} \big(\tilde{g}(Y^{n+\sigma})\big)^\intercal \Big\{(\mathcal{K})^\intercal  (A_{1,\sigma}(\mathcal{K}))^\intercal \mathcal{M} A_{1,\tau}(\mathcal{K}) \mathcal{K}+2\mathcal{M}A'_{\sigma,\tau}(\mathcal{K}) \mathcal{K}\Big \}  \tilde{g}(Y^{n+\tau}) d\sigma d\tau dx\\
=& \frac{-1}{2(2 \pi)^d}\int_{\mathbf{T}^d}\frac{1}{2}\int_{0}^{1}
\int_{0}^{1} \big(\tilde{g}(Y^{n+\tau})\big)^\intercal
\Big\{(\mathcal{K})^\intercal  (A_{1,\tau}(\mathcal{K}))^\intercal
\mathcal{M} A_{1,\sigma}(\mathcal{K})
\mathcal{K} +(2\mathcal{M}A'_{\sigma,\tau}(\mathcal{K})
\mathcal{K})^\intercal\Big \} \tilde{g}(Y^{n+\sigma})  d\tau d\sigma
dx.
\end{aligned}
\end{equation*}
It is clear from the third  equality  of \eqref{epcond} that
\begin{equation*}
\begin{aligned}%\label{H3}
2(\mathcal{H}[y^{n+1} ]-\mathcal{H}[y^{n} ])
= &\frac{-1}{2(2 \pi)^d}\int_{\mathbf{T}^d} \int_{0}^{1}
\int_{0}^{1} \big(\tilde{g}(Y^{n+\tau})\big)^\intercal
\Big\{(\mathcal{K})^\intercal  (A_{1,\tau}(\mathcal{K}))^\intercal
\mathcal{M} A_{1,\sigma}(\mathcal{K})
\mathcal{K}\\& +\mathcal{M}A'_{\sigma,\tau}(\mathcal{K})
\mathcal{K}+(\mathcal{M}A'_{\sigma,\tau}(\mathcal{K})
\mathcal{K})^\intercal\Big \} \tilde{g}(Y^{n+\sigma})  d\tau d\sigma
dx\\=&0.
\end{aligned}
\end{equation*} The proof is
completed.
\hfill \end{proof}

  In what follows,  we present three practical
energy-preserving algorithms based on the scheme \eqref{cs ei} and on the conditions \eqref{epcond} of energy preservation.
The coefficients are obtained by solving the conditions \eqref{epcond} and we omit the details of calculations for brevity.  %convergence and long time conservations, which will be given in this section and the next two sections.

%\subsection{Energy-preserving algorithms}
\begin{algo} \label{alg:1} \textbf{(Energy-preserving algorithm 1.)}
For the integrator given in Definition \ref{def:CSEI},
 consider $s=1$ and define a practical method \eqref{cs ei} with the coefficients
$$C_{\tau}(\mathcal{V})=  (1-\tau)I+ \tau e^{\mathcal{V}},\
   A_{\tau,\sigma}(\mathcal{V})=\tau\varphi_1(\mathcal{V}).
$$
 We shall refer to this integrator  by EP1.\end{algo}

\begin{algo} \label{alg:2} \textbf{(Energy-preserving algorithm 2.)}  We   choose   $s=2$ and the coefficients of   \eqref{cs ei} are given by
\begin{equation*}
\begin{array}{ll}
  &C_{\tau}(\mathcal{V})=  \frac{(\tau-1)(\tau-m)}{m} I+\frac{\tau(\tau-1)}{m(m-1)}e^{m\mathcal{V}}+\frac{\tau(m-\tau)}{m-1}
 e^{\mathcal{V}},\ A_{\tau,\sigma}(\mathcal{V})=\sum\limits_{l=1}^2\sum\limits_{n=1}^2a_{ln}(\mathcal{V})\tau^l\sigma^{n-1},
\end{array}%\label{2 eei}%
\end{equation*}
where $m$ is a   parameter   required that $m\neq0,1$, and
\begin{equation*}
\begin{array}{ll}
%   &  b_{1}(\mathcal{V})=(1+1/m)\varphi_1(\mathcal{V})-1/m\varphi_1((1-m)\mathcal{V}),\\
% &b_{2}(\mathcal{V})=-2/m(\varphi_1(\mathcal{V})-\varphi_1((1-m)\mathcal{V})),\\
 &a_{11}(\mathcal{V})=
 \frac{1+m}{m(1-m)}\varphi_1(m\mathcal{V})+\frac{m+1}{m-1}\varphi_1(\mathcal{V})+\frac{1}{1-m}\varphi_1((1-m)\mathcal{V}),\\
 &a_{22}(\mathcal{V})=
 \frac{2}{m(1-m)}\big(\varphi_1(m\mathcal{V})-\varphi_1(\mathcal{V})+\varphi_1((1-m)\mathcal{V})\big),\\
  &a_{21}(\mathcal{V})=(1+1/m)\varphi_1(\mathcal{V})-1/m\varphi_1((1-m)\mathcal{V})-a_{11}(\mathcal{V}),\\ &a_{12}(\mathcal{V})=-2/m(\varphi_1(\mathcal{V})-\varphi_1((1-m)\mathcal{V}))-a_{22}(\mathcal{V}).
\end{array}
\end{equation*}
As an example of this  method, we choose  $m=1/2$ and denoted it by EP2.\end{algo}

\begin{algo} \label{alg:3} \textbf{(Energy-preserving algorithm 3.)} As another example, we choose $s=3$ and
\begin{equation*}
\begin{aligned}
  C_{\tau}(\mathcal{V})=& \sum\limits_{k=0}^3l_k(\tau)e^{c_k\mathcal{V}},
 \
 % A_{1,\tau}(\mathcal{V})= \sum\limits_{k=1}^3b_{k}(\mathcal{V})\tau^{k-1},\
  A_{\tau,\sigma}(\mathcal{V})=\sum\limits_{l=1}^3\sum\limits_{n=1}^3a_{ln}(\mathcal{V})\tau^l\sigma^{n-1},
\end{aligned}%\label{4 eei}%
\end{equation*}
where $l_j(\tau)=\prod_{k\neq j}\frac{\tau-c_k}{c_j-c_k}$ for
$j=0,\ldots,3$ and
\begin{equation*}
\begin{aligned}
%b_{j}(\mathcal{V})=&-C_{j0}\varphi_{1,1}-C_{j1}(1-c_1)\varphi_{1,1-c_1}-C_{j2}(1-c_2)\varphi_{1,1-c_2},\  j=1,2,3,\\
a_{jj}(\mathcal{V})
=&-\big(c_1C_{j0}C_{j1}\varphi_{1,c_1}+c_2C_{j0}C_{j2}\varphi_{1,c_2}+(c_2-c_1)C_{j1}C_{j2}\varphi_{1,c_2-c_1}\\
 &+C_{j0}C_{j3}\varphi_{1,1}+(1-c_1)C_{j1}C_{j3}\varphi_{1,1-c_1}+(1-c_2)C_{j2}C_{j3}\varphi_{1,1-c_2}\big)/j,\  j=1,2,3,\\
 a_{j+1,1}(\mathcal{V})
=&-\big(c_1C_{j1}C_{00}\varphi_{1,c_1}+c_2C_{j2}C_{00}\varphi_{1,c_2}+(c_2-c_1)C_{j2}C_{01}\varphi_{1,c_2-c_1}\\
 &+C_{j3}C_{00}\varphi_{1,1}+(1-c_1)C_{j3}C_{01}\varphi_{1,1-c_1}+(1-c_2)C_{j3}C_{02}\varphi_{1,1-c_2}\big)/j,\ j=1,2,\\
   a_{1,j+1}(\mathcal{V})
=&-\big(c_1C_{j0}C_{01}\varphi_{1,c_1}+c_2C_{j0}C_{02}\varphi_{1,c_2}+(c_2-c_1)C_{j1}C_{02}\varphi_{1,c_2-c_1}\\
 &+C_{j0}C_{03}\varphi_{1,1}+(1-c_1)C_{j1}C_{03}\varphi_{1,1-c_1}+(1-c_2)C_{j2}C_{03}\varphi_{1,1-c_2}\big)/j,\
  j=1,2,\\
     a_{32}(\mathcal{V})
=&-\big(c_1C_{21}C_{10}\varphi_{1,c_1}+c_2C_{22}C_{10}\varphi_{1,c_2}+(c_2-c_1)C_{22}C_{11}\varphi_{1,c_2-c_1}\\
 &+C_{23}C_{10}\varphi_{1,1}+(1-c_1)C_{23}C_{11}\varphi_{1,1-c_1}+(1-c_2)C_{23}C_{12}\varphi_{1,1-c_2}\big),\\
      a_{23}(\mathcal{V})
=&-\big(c_1C_{20}C_{11}\varphi_{1,c_1}+c_2C_{20}C_{12}\varphi_{1,c_2}+(c_2-c_1)C_{21}C_{12}\varphi_{1,c_2-c_1}\\
 &+C_{20}C_{13}\varphi_{1,1}+(1-c_1)C_{21}C_{13}\varphi_{1,1-c_1}+(1-c_2)C_{22}C_{13}\varphi_{1,1-c_2}\big).
\end{aligned}
\end{equation*}
Here we choose %$c_1= (2 +  \frac{1}{\sqrt[3]{2}} + \sqrt[3]{2})/3,\ c_2= (2 -2\sqrt[3]{2} - \sqrt[3]{4})/6$
$c_1= 1/3,\ c_2= \frac{1}{18}(14+(71-9\sqrt{58})^{\frac{1}{3}} +(71+9\sqrt{58})^{\frac{1}{3}} )$
  and use the notations
\begin{equation*}
\begin{array}{ll}
\varphi_{1,1}=\varphi_1(\mathcal{V}),\
&\varphi_{1,c_1}=\varphi_1(c_1\mathcal{V}),\qquad \qquad \
\varphi_{1,c_2}=\varphi_1(c_2\mathcal{V}),\\
\varphi_{1,1-c_1}=\varphi_1((1-c_1)\mathcal{V}),\
&\varphi_{1,1-c_2}=\varphi_1((1-c_2)\mathcal{V}),\ \ \
\varphi_{1,c_2-c_1}=\varphi_1((c_2-c_1)\mathcal{V}),\\
 C_{00} =  \frac{c_1 +
c_2 + c_1c_2}{-c_1c_2},\ &C_{01} = \frac{c_2}{(-1 + c_1) c_1 (c_1 -
c_2)},\quad \ \ C_{02}
=\frac{-c_1}{(c_1 - c_2) (-1 + c_2) c_2)}, \\
C_{10} =  \frac{2 (1 + c_1 + c_2)}{c_1 c_2},\ &C_{11} =  \frac{2(1 +
c_2)}{(-c_1 + c_1^2) (-c_1 + c_2)},\ \ \  \   C_{12} =\frac{2 (1 +
c_1)}{(c_1 - c_2)
(-1 + c_2) c_2},\\
C_{20} =\frac{ -3}{c_1 c_2},\ &C_{21} = \frac{3}{(-1 + c_1) c_1 (c_1
- c_2)},\ \ \ \ \  C_{22} =\frac{-3}{(c_1 - c_2) (-1 + c_2) c_2)}.
\end{array}
\end{equation*}
We shall refer to this semi-discrete integrator  by EP3.

\end{algo}

 The presented three  algorithms EP1-EP3 are obtained by considering the conditions \eqref{epcond} of energy preservation and this shows that all of them are energy-preserving schemes. It is noted that some more energy-preserving schemes can be derived from other value of $s$ and \eqref{epcond} and  we omit them  for brevity.  The main observation of the paper is that some  of these energy-preserving algorithms show optimal error bound and good near conservations of actions, momentum and
density   over long times. All of these observations will be illustrated by numerical experiments in Sect. \ref{sec:num exp}.  The next two sections are devoted to the optimal convergence and long time conservations in actions, momentum and density.

\section{Optimal convergence}\label{sec:convergence}
In this section, we analyze the convergence  of the presented three schemes EP1-EP3.

\subsection{Notations and auxiliary results}
In this part, we present some auxiliary results which will be used in the analysis.

For the exact solution to \eqref{sch system}, we require the following assumption.
\begin{assum}\label{AE} It is assumed that the initial value $u^0(x)$ is chosen
in $H^{\alpha}$ with the sufficiently large exponent $\alpha > 0$. Then the exact solution to \eqref{sch system} is sufficiently regular.
\end{assum}

In the analysis of convergence, we will
 reparametrize the time variable $t$ as
\begin{equation}\label{rescaling}
\kappa:=t/\eps.
\end{equation}
By letting
\begin{equation}\label{imp rea}
w(\kappa,x):=u(t,x),
\end{equation}
it is obtained that $$w_{\kappa}(\kappa,x)=\frac{\partial}{\partial \kappa} u(t,x)=\eps u_t(t,x).
$$
Thus in this section, we consider the following equivalent long-term
NSE (\cite{Chartier16})
\begin{equation}\label{lon sch system}
\left\{\begin{aligned} &\mathrm{i}w_{\kappa}(\kappa,x)=-\triangle w
(\kappa,x)+\varepsilon \lambda|w(\kappa,x)|^{2}w(\kappa,x),\ \ (\kappa,x)\in[0,T/\varepsilon]\times
[-\pi,\pi]^d,\\ & w(0,x)=w^0(x):=u^0(x),\ \ \ \ \ \ x\in [-\pi,\pi]^d,
\end{aligned}\right.
\end{equation}
which helps to zoom-in to see the different scales between  $\eps$ and time step, and to see the averaging effect  which will be used in the proof of the convergence.
The solution of \eqref{lon sch system} satisfies the following properties.
\begin{mytheo}\label{cathm} (See \cite{Castella15}.) For any $\varepsilon > 0$ and $w^0 \in H^{\alpha}$,  there exists a constant
$T > 0$ such that, the long-term NSE \eqref{lon sch system} has a unique solution which satisfies
$$ w\in C^0([0,T/\varepsilon];H^{\alpha})\bigcap  C^1([0,T/\varepsilon];H^{\alpha-2})$$ and
 $$\norm{w(\kappa,\cdot)}_{H^{\alpha}}\leq K \norm{w^0}_{H^{\alpha}}\  \textmd{for \  any}\ \kappa\in[0,T/\varepsilon],$$
where $\alpha > d/2+2$ and $K > 1$.
\end{mytheo}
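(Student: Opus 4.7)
The plan is to obtain local well-posedness of the rescaled equation \eqref{lon sch system} in $H^{\alpha}$ by a standard fixed-point argument, and then to upgrade the local-in-time existence to the long time interval $[0,T/\varepsilon]$ by exploiting the fact that the nonlinearity in the rescaled variable $\kappa$ carries the small prefactor $\varepsilon$. Throughout I would use the algebra property of $H^{\alpha}$, which holds because $\alpha > d/2$.

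First I would set up the mild formulation
\begin{equation*}
w(\kappa,\cdot)=e^{\ii\kappa\triangle}w^{0}-\ii\varepsilon\lambda\int_{0}^{\kappa}e^{\ii(\kappa-s)\triangle}\,\abs{w}^{2}w(s,\cdot)\,ds,
\end{equation*}
and carry out a standard Picard/contraction-mapping argument on the closed ball of radius $2\norm{w^{0}}_{H^{\alpha}}$ in $C^{0}([0,\kappa_{*}];H^{\alpha})$. Because $e^{\ii\kappa\triangle}$ is an isometry on $H^{\alpha}$ and $\abs{w}^{2}w$ is controlled in $H^{\alpha}$ by $C\norm{w}_{H^{\alpha}}^{3}$ (algebra estimate, using $\alpha>d/2$), the nonlinear operator is Lipschitz on the ball with constant $\lesssim \varepsilon \kappa_{*}\norm{w^{0}}_{H^{\alpha}}^{2}$. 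This yields a unique mild solution on a short interval $[0,\kappa_{*}]$ and, by the smoothing-free Strichartz-type estimate, upgrades to a strong solution in $C^{0}([0,\kappa_{*}];H^{\alpha})$.

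The crucial step is the a priori $H^{\alpha}$ estimate that extends the solution up to time $T/\varepsilon$. Applying $\langle\nabla\rangle^{\alpha}$ to \eqref{lon sch system}, pairing with $\langle\nabla\rangle^{\alpha}\bar{w}$ in $L^{2}$ and taking real parts, the Laplacian term drops (skew-adjointness of $\ii\triangle$) and one obtains
\begin{equation*}
\frac{1}{2}\frac{d}{d\kappa}\norm{w(\kappa,\cdot)}_{H^{\alpha}}^{2}
=\varepsilon\lambda\,\Im\!\int_{[-\pi,\pi]^{d}}\langle\nabla\rangle^{\alpha}(\abs{w}^{2}w)\,\overline{\langle\nabla\rangle^{\alpha}w}\,dx
\;\leq\; C\,\varepsilon\,\norm{w(\kappa,\cdot)}_{H^{\alpha}}^{4},
\end{equation*}
again by the algebra property. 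This Bernoulli-type inequality integrates explicitly: as long as $C\,\varepsilon\,\kappa\,\norm{w^{0}}_{H^{\alpha}}^{2}<\tfrac{1}{2}$, the norm stays bounded by $2\norm{w^{0}}_{H^{\alpha}}$. Choosing $T>0$ so that $CT\norm{w^{0}}_{H^{\alpha}}^{2}\leq 1/4$ (a condition independent of $\varepsilon$), the bound $\norm{w(\kappa,\cdot)}_{H^{\alpha}}\leq K\norm{w^{0}}_{H^{\alpha}}$ holds on all of $[0,T/\varepsilon]$ with $K=2$. Combining the uniform bound with local well-posedness and a standard continuation argument yields global existence on $[0,T/\varepsilon]$.

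Finally, the $C^{1}$ regularity in time follows immediately from the equation: since $w\in C^{0}([0,T/\varepsilon];H^{\alpha})$ and $\alpha-2>d/2$, the right-hand side $\ii\triangle w-\ii\varepsilon\lambda\abs{w}^{2}w$ lies in $C^{0}([0,T/\varepsilon];H^{\alpha-2})$ by the algebra property applied at regularity $\alpha>\alpha-2$. The main obstacle in the argument is not the local theory, which is classical, but the extraction of an $\varepsilon$-independent existence time on the stretched horizon $[0,T/\varepsilon]$; this is what forces the energy estimate to be carried out with careful tracking of the $\varepsilon$ factor that arises only after the rescaling \eqref{rescaling}, and it is precisely this averaging-friendly cancellation that makes the rescaled formulation preferable to a direct analysis of \eqref{sch system}.
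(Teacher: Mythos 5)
Your argument is essentially correct, but note that the paper does not prove this statement at all: Theorem \ref{cathm} is imported verbatim from \cite{Castella15} as a black box, so there is no in-paper proof to compare against. What you have written is the standard self-contained argument, and it does the job: the mild formulation plus the algebra property of $H^{\alpha}$ for $\alpha>d/2$ gives local well-posedness, and because the nonlinearity in the $\kappa$ variable carries the factor $\varepsilon$, both the contraction time and the a priori bound naturally live on an interval of length $O(1/\varepsilon)$; equivalently, undoing the rescaling, the statement is nothing more than $\varepsilon$-uniform local well-posedness of \eqref{sch system} on $[0,T]$, which holds because $e^{\ii t\triangle/\varepsilon}$ is unitary on $H^{\alpha}$ for every $\varepsilon$. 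Three small points to tidy up. First, the differential inequality $\frac{d}{d\kappa}\norm{w}_{H^{\alpha}}^{2}\leq C\varepsilon\norm{w}_{H^{\alpha}}^{4}$ is derived formally; to make it rigorous you should either run it on smooth (truncated/regularized) data and pass to the limit, or phrase the continuation argument purely through the Duhamel integral bound, which avoids differentiating the norm. Second, the appeal to a ``smoothing-free Strichartz-type estimate'' is unnecessary and slightly misleading on the torus: for $\alpha>d/2$ the algebra property alone shows the mild solution solves the equation in $C^{0}(H^{\alpha})\cap C^{1}(H^{\alpha-2})$, and indeed the $C^{1}$ claim needs only $|w|^{2}w\in H^{\alpha}\hookrightarrow H^{\alpha-2}$, not $\alpha-2>d/2$. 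Third, your constant bookkeeping actually yields $K=\sqrt{2}$ on the stated ball rather than $2$; this is immaterial since the theorem only asserts some $K>1$, and your choice of $T$ depending on $\norm{w^{0}}_{H^{\alpha}}$ but not on $\varepsilon$ is exactly the uniformity the paper needs downstream in the convergence proof.
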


\begin{prop} \label{capro} (See \cite{Chartier16}.) Let $f(w)=-
\ii\lambda|w|^{2}w$ and the following two estimates hold for this function.
\begin{itemize}

\item  For the function $f(w) \in C^{\infty}: H^{\alpha}\rightarrow H^{\alpha}$,  there exists a constant $M > 0$ such that for all
  $(w, v) \in H^{\alpha}\times H^{\alpha}$, it has the estimates
$$\norm{f(w)}_{H^{\alpha}}\leq M,\ \ \norm{f'(w)(v)}_{H^{\alpha}}\leq M \norm{v}_{H^{\alpha}}.$$
Moreover, similar estimates for higher derivatives also hold.
If $\alpha$ is changed into $\alpha-2>0$, all the results are still true.
\item  The function has the Lipschitz estimate
$$ \norm{f(w)-f(v)}_{H^{\beta}}\leq L \norm{u-v}_{H^{\beta}},\  \ (w,v)\in H^{\alpha-2}\times H^{\alpha-2},$$
where $\beta \in [0, \alpha- 2]$ and $L > 0$ is   a constant.
\end{itemize}
\end{prop}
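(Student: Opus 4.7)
The plan is to reduce every claim to the Banach algebra property of $H^{\alpha}([-\pi,\pi]^d)$ for $\alpha>d/2$, exploiting the fact that $f(w)=-\ii\lambda |w|^2 w$ is a cubic polynomial in $(w,\bar w)$. The first thing I would remark is that the constants $M$ in the statement are not absolute but are understood to depend on an a priori bound on $\norm{w}_{H^\alpha}$ (and on $\norm{v}_{H^\alpha}$ when present); for the solution of \eqref{lon sch system} such a bound is furnished uniformly in $\kappa\in[0,T/\varepsilon]$ by Theorem \ref{cathm}. From $\alpha>d/2+2$ and $\alpha-2>d/2$, both $H^\alpha$ and $H^{\alpha-2}$ are Banach algebras, so there is a constant $C_\alpha$ with $\norm{uv}_{H^\alpha}\le C_\alpha \norm{u}_{H^\alpha}\norm{v}_{H^\alpha}$, and similarly at level $\alpha-2$. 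Complex conjugation is an isometry on every $H^s$, which lets me treat $\bar w$ on the same footing as $w$.

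For the first bullet, I would estimate
\begin{equation*}
\norm{f(w)}_{H^\alpha}=\abs{\lambda}\norm{w\bar w w}_{H^\alpha}\le \abs{\lambda} C_\alpha^2 \norm{w}_{H^\alpha}^3,
\end{equation*}
giving boundedness on any ball of $H^\alpha$. Computing the Fr\'echet derivative of the cubic explicitly,
\begin{equation*}
f'(w)(v)=-\ii\lambda\bigl(2\abs{w}^2 v + w^2\bar v\bigr),
\end{equation*}
I would apply the algebra estimate twice to obtain $\norm{f'(w)(v)}_{H^\alpha}\le 3\abs{\lambda}C_\alpha^2 \norm{w}_{H^\alpha}^2\norm{v}_{H^\alpha}$. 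For the higher derivatives, the crucial observation is that $f$ is a polynomial of degree three in the real pair $(p,q)=(\Re w,\Im w)$, so $f^{(k)}\equiv 0$ for $k\ge 4$; for $k=2,3$ the remaining multilinear forms are polynomial in $w,\bar w$ and bounded by the same algebra argument. Exchanging the role of $\alpha$ by $\alpha-2$ throughout only requires $\alpha-2>d/2$, which was already assumed.

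For the Lipschitz bullet, I would use the standard telescoping identity
\begin{equation*}
\abs{w}^2 w-\abs{v}^2 v = (w-v)\abs{w}^2 + v\bar w (w-v) + \abs{v}^2(w-v),
\end{equation*}
so that factoring $(w-v)$ or $\overline{(w-v)}$ out of each term and applying the algebra estimate in $H^\beta$ (which is valid for $0\le \beta\le \alpha-2$ once one notes that $H^{\alpha-2}\hookrightarrow H^\beta$ continuously and $H^{\alpha-2}$ is an algebra, so multiplication $H^{\alpha-2}\times H^\beta\to H^\beta$ is bounded) gives
\begin{equation*}
\norm{f(w)-f(v)}_{H^\beta}\le \abs{\lambda}\bigl(\norm{w}_{H^{\alpha-2}}^2+\norm{v}_{H^{\alpha-2}}\norm{w}_{H^{\alpha-2}}+\norm{v}_{H^{\alpha-2}}^2\bigr)C \norm{w-v}_{H^\beta}.
\end{equation*}
Absorbing the a priori bound on $\norm{w}_{H^{\alpha-2}}$ and $\norm{v}_{H^{\alpha-2}}$ yields the Lipschitz constant $L$.

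The only genuine subtlety, and the point I would write most carefully, is the product estimate $H^{\alpha-2}\times H^\beta\to H^\beta$ used in the last step: one needs that the "low-regularity" factor $w-v$, sitting in $H^\beta$ with $\beta$ possibly much smaller than $\alpha-2$, can be multiplied by the "high-regularity" factor $\abs{w}^2$ or $v\bar w$ without losing derivatives. This is the standard low-high multiplication lemma (a consequence of Kato--Ponce or simply of $H^{\alpha-2}\hookrightarrow L^\infty$ combined with fractional product rules), and it is exactly why the proposition restricts $\beta$ to $[0,\alpha-2]$ rather than allowing $\beta=\alpha$. Everything else is bookkeeping on a cubic polynomial.
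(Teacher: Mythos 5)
The paper does not actually prove this proposition: it is imported verbatim from \cite{Chartier16} (the citation ``(See \cite{Chartier16})'' is the entire justification), so there is no in-paper argument to compare against. Your blind proof is the standard one and is essentially correct: reduce everything to the Banach algebra property of $H^{s}$ for $s>d/2$, use that $f$ is a cubic polynomial in $(w,\bar w)$ so that $f^{(k)}\equiv 0$ for $k\ge 4$, and invoke the tame product estimate $H^{\alpha-2}\times H^{\beta}\to H^{\beta}$ for $0\le\beta\le\alpha-2$ to get the Lipschitz bound at low regularity. You are also right to flag that the constants $M$ and $L$ can only be uniform on bounded sets of $H^{\alpha}$ (resp.\ $H^{\alpha-2}$), since $\norm{f(w)}_{H^{\alpha}}\sim\norm{w}_{H^{\alpha}}^{3}$; this restriction is implicit in the statement and is supplied in context by Theorem \ref{cathm}.

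One slip to correct: your displayed telescoping identity is wrong as written. The middle term should carry the conjugate of the increment, i.e.
\begin{equation*}
\abs{w}^{2}w-\abs{v}^{2}v=(w-v)\abs{w}^{2}+vw\,\overline{(w-v)}+\abs{v}^{2}(w-v),
\end{equation*}
whereas you wrote $v\bar w(w-v)$ for the middle term (a quick check with $w=\ii$, $v=1$ shows your version fails). Your surrounding prose (``factoring $(w-v)$ or $\overline{(w-v)}$ out of each term'') and the final estimate $\abs{\lambda}\bigl(\norm{w}^{2}+\norm{v}\norm{w}+\norm{v}^{2}\bigr)C\norm{w-v}_{H^{\beta}}$ are consistent with the correct identity, so this is a typo rather than a gap, but it should be fixed; note also that conjugation is an isometry on $H^{\beta}$, so the conjugated increment causes no loss. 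With that repair the argument is complete.
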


\begin{prop} \label{pro bo} (See  \cite{Dujardin09}.)
Denote by $\varphi$ a  bounded function
(bounded by $C\geq0$) from $\textmd{i}\mathbb{R}$ to $\mathbb{C}$ and then the operator-argument
function $\varphi(\textmd{i} h \Delta)$ is bounded by
$$\norm{\varphi(\textmd{i} h \Delta)}_{H^{\alpha}\hookrightarrow
H^{\alpha}}\leq C$$
for all $h
>0$ and $\alpha\geq0$.  For
example, the estimate
$\norm{e^{\textmd{i}h \Delta}}_{H^{\alpha}\hookrightarrow
H^{\alpha}}=1$
holds.
\end{prop}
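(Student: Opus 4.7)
The plan is to prove Proposition \ref{pro bo} by diagonalizing $\mathrm{i}h\Delta$ in the Fourier basis and then noting that the functional calculus reduces the operator norm estimate to a pointwise supremum bound on the symbol.

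First I would set up the spectral picture. On $[-\pi,\pi]^d$ with periodic boundary conditions, the Laplacian $\Delta$ has a complete $L^2$-orthonormal system of eigenfunctions $\{e^{\mathrm{i}(j\cdot x)}\}_{j\in\mathbb{Z}^d}$ with eigenvalues $-\abs{j}^2$. Writing $u(x)=\sum_{j\in\mathbb{Z}^d}u_j e^{\mathrm{i}(j\cdot x)}$, the Sobolev norm is defined by $\norm{u}_{H^\alpha}^2=\sum_{j\in\mathbb{Z}^d}\weig{j}^{2\alpha}\abs{u_j}^2$ with $\weig{j}=(1+\abs{j}^2)^{1/2}$. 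The eigenvalues of $\mathrm{i}h\Delta$ are $-\mathrm{i}h\abs{j}^2$, which all lie in $\mathrm{i}\mathbb{R}$, so the symbol $\varphi(-\mathrm{i}h\abs{j}^2)$ is well defined for every $j\in\mathbb{Z}^d$ and $h>0$.

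Next I would apply the functional calculus. By diagonalization,
\begin{equation*}
\varphi(\mathrm{i}h\Delta)u(x)=\sum_{j\in\mathbb{Z}^d}\varphi(-\mathrm{i}h\abs{j}^2)\,u_j\,e^{\mathrm{i}(j\cdot x)}.
\end{equation*}
Using the hypothesis $\abs{\varphi(\mathrm{i}\xi)}\leq C$ for every $\xi\in\mathbb{R}$ with $\xi=-h\abs{j}^2$, I can estimate the $H^\alpha$ norm termwise:
\begin{equation*}
\norm{\varphi(\mathrm{i}h\Delta)u}_{H^\alpha}^2
=\sum_{j\in\mathbb{Z}^d}\weig{j}^{2\alpha}\bigl\lvert\varphi(-\mathrm{i}h\abs{j}^2)\bigr\rvert^{2}\abs{u_j}^2
\leq C^{2}\sum_{j\in\mathbb{Z}^d}\weig{j}^{2\alpha}\abs{u_j}^2
=C^{2}\norm{u}_{H^\alpha}^2,
\end{equation*}
which gives the operator bound uniformly in $h>0$ and $\alpha\geq 0$.

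Finally, the example $e^{\mathrm{i}h\Delta}$ corresponds to the symbol $\varphi(\mathrm{i}\xi)=e^{\mathrm{i}\xi}$, which has modulus exactly $1$ on $\mathrm{i}\mathbb{R}$; plugging $C=1$ into the general bound yields the isometry statement. I do not expect any serious obstacle here: the only subtlety is making sure that $\varphi$ is evaluated only on the spectrum of $\mathrm{i}h\Delta$, which is a subset of $\mathrm{i}\mathbb{R}$, so the pointwise hypothesis on $\varphi$ is exactly what is needed.
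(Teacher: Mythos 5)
Your proof is correct and is exactly the standard Fourier-multiplier argument one expects here: the paper itself gives no proof of Proposition \ref{pro bo}, only a citation to \cite{Dujardin09}, and the diagonalization of $\mathrm{i}h\Delta$ on the exponential basis followed by a pointwise bound on the symbol is precisely the argument behind that reference. The only point worth making explicit is that for $e^{\mathrm{i}h\Delta}$ each Fourier coefficient is multiplied by a unimodular factor, so every term in the $H^{\alpha}$ sum is preserved exactly, which upgrades the inequality with $C=1$ to the stated equality.
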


%To this end, we restrict ourself to  consider the semi-discrete integrator  \eqref{cs ei} applied to the long term NSE \eqref{lon sch system}, which means that
% $(\triangleu)(t,x)= \triangle w(t,x) $ and the nonlinear function becomes $\varepsilon f(w)$ in \eqref{cs ei}.

\subsection{Main result}

We first note that for the long term NSE \eqref{lon sch system}, the evolution operator  $e^{\ii t\Delta} $ is periodic with period $T_0$ (\cite{Chartier16}).
For simplicity, it is assumed that $T_0 = 1$ in this section  since this can be achieved by a simple rescaling of time. For simplicity of notations, we shall denote $$A\lesssim B$$ for $A\leq CB$ with a generic constant $C>0$  independent of $n$ or the time step size  or $\varepsilon$ but depends on $T$ and the constants appeared in Theorem \ref{cathm} and Propositions \ref{capro}-\ref{pro bo}.
We use the abbreviation  $w(\kappa)$ instead of $w(\kappa,x)$ for brevity.
For solving the long term NSE \eqref{lon sch system}, the
  exponential time integrator becomes
\begin{equation}
\begin{aligned}
w^{n+\tau}(x)&=\Phi^{\tau \delta\kappa}(w^{n}(x)):=C_{\tau}(\mathcal{W})w^n(x)+ \eps\delta\kappa \int_{0}^{1}A_{\tau,\sigma}(\mathcal{W})f(w^{n+\sigma}(x))d\sigma,\ \ 0\leq \tau\leq1,
\end{aligned}\label{cs ei2}%
\end{equation}
where $\delta\kappa:=\kappa_{n+1}-\kappa_n$ is the time step size and $\mathcal{W}=\ii \delta\kappa \Delta$. Then  EP1-EP3 for solving  \eqref{lon sch system} can also be obtained by considering Algorithms \ref{alg:1}-\ref{alg:3}, respectively.
The optimal convergence of these algorithms is given by the following theorem.

\begin{mytheo} \label{thm ful convergence} (\textbf{Optimal convergence of  algorithms for the long term system.})
 There exists a constant $N_0>0$ independent of $\varepsilon$, such that for any  time step $\delta\kappa=\frac{T_0}{N}$  with any integer $N\geq N_0$, the  EP1-EP3 for solving the long term system \eqref{lon sch system} have the following   error bounds for both regimes $\varepsilon$:
\begin{equation}\label{FEWV}
 \begin{aligned}
 &\textmd{EP1}:\ \   \norm{(\Phi^{ \delta\kappa})^n(w^0)-w(\kappa_n)}_{H^{\alpha-4}}
\lesssim  \delta\kappa^2,\qquad \quad\ \  \alpha > \max(d/2+2,4)\\
&\textmd{EP2}:\ \     \norm{(\Phi^{ \delta\kappa})^n(w^0)-w(\kappa_n)}_{H^{\alpha-6}}
\lesssim  \varepsilon \delta\kappa^2+ \delta\kappa^3,\ \ \alpha > \max(d/2+2,6),\\
&\textmd{EP3}:\ \     \norm{(\Phi^{ \delta\kappa})^n(w^0)-w(\kappa_n)}_{H^{\alpha-8}}
\lesssim  \varepsilon \delta\kappa^3+ \delta\kappa^4,\ \ \alpha > \max(d/2+2,8),
 \end{aligned}
\end{equation}
where   $n\delta\kappa \leq \frac{T}{\varepsilon}$. When $\varepsilon=1$, the above results of EP2 and EP3 can be given in the $H^{\alpha -4}$-norm
and $H^{\alpha -6}$-norm, respectively.
\end{mytheo}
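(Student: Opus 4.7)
The plan is to use the classical Lady Windermere's fan argument: first bound the one-step local error $\mathcal{E}_n := \Phi^{\delta\kappa}(w(\kappa_n)) - w(\kappa_{n+1})$, then combine with stability of the numerical flow $\Phi^{\delta\kappa}$ and sum over the $n \leq T/(\varepsilon\delta\kappa)$ steps. Because the nonlinearity of the rescaled equation \eqref{lon sch system} carries the prefactor $\varepsilon$, Propositions~\ref{capro} and \ref{pro bo} yield a Lipschitz bound $\|\Phi^{\delta\kappa}(u)-\Phi^{\delta\kappa}(v)\|_{H^\beta}\leq(1+C\varepsilon\delta\kappa)\|u-v\|_{H^\beta}$ for the (implicit) scheme \eqref{cs ei2} as soon as $\delta\kappa$ is small enough for the fixed-point iteration defining the internal stages to contract; this is where the threshold $N_0$ enters. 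A discrete Gronwall bound then gives growth factor $e^{CT}$ uniformly in $\varepsilon$, reducing the theorem to controlling $\sum_{k<n}\|\mathcal{E}_k\|$.

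For the local error I would subtract the scheme \eqref{cs ei2} at $\tau=1$ from the Duhamel identity
$$w(\kappa_{n+1})=e^{\mathcal{W}}w(\kappa_n)+\varepsilon\delta\kappa\int_0^1 e^{(1-\xi)\mathcal{W}}f\bigl(w(\kappa_n+\xi\delta\kappa)\bigr)d\xi.$$
The linear parts cancel via the fitting condition $C_1(\mathcal{W})=e^{\mathcal{W}}$, and the internal-stage discrepancy is absorbed inductively through the Lipschitz continuity of $f$, leaving the pure quadrature error
$$\varepsilon\delta\kappa\int_0^1\bigl(e^{(1-\xi)\mathcal{W}}-A_{1,\xi}(\mathcal{W})\bigr)f\bigl(w(\kappa_n+\xi\delta\kappa)\bigr)d\xi$$
as the core object. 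Taylor-expanding $f(w(\kappa_n+\xi\delta\kappa))$ in $\xi$ and using the polynomial-exactness structure that the fitting conditions $C_{c_j}(\mathcal{W})=e^{c_j\mathcal{W}}$ force on $A_{1,\xi}(\mathcal{W})$ yields the classical local bound $\|\mathcal{E}_n\|_{H^{\alpha-2(s+1)}}\lesssim\varepsilon\delta\kappa^{s+2}$. Each $\xi$-derivative uses one time derivative of $w$, hence one application of $\mathcal{A}$ (two Sobolev indices), which accounts exactly for the index losses $\alpha-4,\alpha-6,\alpha-8$ announced for $s=1,2,3$. Summation over $T/(\varepsilon\delta\kappa)$ steps produces the ``fine'' terms $\delta\kappa^2$ (EP1), $\delta\kappa^3$ (EP2), $\delta\kappa^4$ (EP3).

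The remaining terms $\varepsilon\delta\kappa^2$ (EP2) and $\varepsilon\delta\kappa^3$ (EP3) --- which dominate when $\varepsilon\gtrsim\delta\kappa$ and become the only contributions at $\varepsilon=1$, explaining the stronger-norm statement in the last sentence of the theorem --- come from an averaging refinement in the spirit of \cite{Chartier15,Chartier16}. After the twist $z(\kappa):=e^{-\ii\kappa\Delta}w(\kappa)$, the new unknown satisfies $\partial_\kappa z=-\ii\varepsilon\lambda\,e^{-\ii\kappa\Delta}\bigl(|e^{\ii\kappa\Delta}z|^2 e^{\ii\kappa\Delta}z\bigr)$, so $\partial_\kappa z=O(\varepsilon)$ in every $H^\gamma$. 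Rewriting the quadrature error through $z$ exposes an oscillatory integral whose single integration by parts in $\xi$ trades one power of $\delta\kappa$ for one power of $\varepsilon$ coming from $\partial_\kappa z$, while the corresponding boundary contributions at $\xi=0,1$ are absorbed by the commensurability $\delta\kappa=T_0/N$ together with the periodicity of $e^{\ii\kappa\Delta}$. This yields the alternate local bound $\|\mathcal{E}_n\|\lesssim\varepsilon^2\delta\kappa^{s+1}$ (globally $\varepsilon\delta\kappa^s$), and combining both estimates produces the announced $\varepsilon\delta\kappa^s+\delta\kappa^{s+1}$.

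The main obstacle is the averaging step for EP2 and EP3: one must verify that the specific coefficients in Algorithms~\ref{alg:2}--\ref{alg:3} (and not just any order-$s{+}1$ exponential integrator) deliver the boundary-term cancellation required in the $\xi$-integration by parts. This reduces to a Fourier-mode-by-mode algebraic compatibility between the $a_{ln}(\mathcal{V})$ and the fitting conditions, but is delicate because the number of identities matches the number of stages exactly. For EP1 the argument is unavailable --- only $c_0=0$ and $c_1=1$ are fitted, leaving no interior node to absorb the boundary --- so no averaging improvement exists, consistent with the absence of an $\varepsilon\delta\kappa$ term for EP1 in \eqref{FEWV}.
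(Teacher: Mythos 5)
Your overall architecture (local error plus stability plus a Lady Windermere summation, refined by an averaging step tied to the commensurability $\delta\kappa=T_0/N$) is the right family of ideas, but the proposal misattributes the two terms in \eqref{FEWV}, and the misattribution is a genuine gap for EP2 and EP3. You claim that the fitting conditions force polynomial exactness of $A_{1,\xi}(\mathcal{W})$ and hence a classical local error $\lesssim\varepsilon\delta\kappa^{s+2}$, so that the terms $\delta\kappa^{3}$ (EP2) and $\delta\kappa^{4}$ (EP3) follow by naive summation over $T/(\varepsilon\delta\kappa)$ steps. This is false: the energy-preserving coefficients satisfy the quadrature conditions only up to $O(\delta\kappa)$ defects --- in Lemma \ref{lem loc} one has $\norm{\int_{0}^{1}A_{1,\sigma}(\mathcal{W})\,d\sigma-\varphi_{1}(\mathcal{W})}\lesssim 0$ but only $\norm{\int_{0}^{1}\sigma A_{1,\sigma}(\mathcal{W})\,d\sigma-\varphi_{2}(\mathcal{W})}\lesssim\delta\kappa$ --- so the true local error of EP2 is $O(\varepsilon\delta\kappa^{3})$, not $O(\varepsilon\delta\kappa^{4})$, and naive summation yields only $\delta\kappa^{2}$. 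The $\delta\kappa^{s+1}$ term is precisely the one that requires the averaging argument; it is not the term that comes for free.

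The paper's averaging mechanism is also different in kind from yours. It splits the local error as $\delta^{n+1}=\varepsilon\delta\kappa\,\Psi(\kappa_{n})+\varepsilon^{2}\delta\kappa^{2}\Delta(\kappa_{n})$ as in \eqref{ref locerr}, with $\norm{\Psi}\lesssim\delta\kappa^{2}$ and $\norm{\Delta}\lesssim\delta\kappa$. The $O(\varepsilon^{2})$ part, together with the nonlinear stability correction $\mathbb{E}_{2}$ in \eqref{global err}, gives the $\varepsilon\delta\kappa^{s}$ term by direct summation; the $O(\varepsilon)$ part is summed over a full period $n\delta\kappa=T_{0}$ against the free propagators $e^{\ii(n-l)\delta\kappa\triangle}$, where periodicity of $\xi\mapsto e^{-\ii\xi\triangle}f(e^{\ii\xi\triangle}w)$ and $\delta\kappa=T_{0}/N$ turn the Riemann sum into the zeroth Fourier coefficient plus aliased modes $\sum_{k}\hat{G}_{nk}$, and the extra power of $\delta\kappa$ comes from comparing $\hat{F}_{0}$ with $\hat{G}_{0}$ via Lemma A.1 of \cite{Chartier16} --- a cross-step cancellation, not a per-step one. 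Your proposed single-step integration by parts in $\xi$ against $e^{(1-\xi)\ii\delta\kappa\triangle}$ cannot deliver this: within one step the oscillation is governed by the unbounded operator $\triangle$, so integrating by parts costs derivatives without producing a factor of $\varepsilon$, and the boundary terms you hope to cancel are not controlled by the stage structure. Your conclusion about EP1 is correct, but for the paper the reason is that $\hat{F}_{0}-\hat{G}_{0}$ is only $O(\delta\kappa^{2})$ for EP1's coefficients (see the remark after the proof), not the absence of interior fitting nodes as such.
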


\begin{rem}
Similarly to \cite{Chartier16,zhao2020}, the time step $\delta\kappa=T_0/N$ with some integer $N$  is only a technique condition for rigorous proof and we only need $\delta\kappa\lesssim1$ in practice, which will be shown numerically  in Sect. \ref{sec:num exp}.
In the whole paper, it is noted that estimates are considered  in non-negative Sobolev spaces.
\end{rem}

 Before we present the proof of Theorem \ref{thm ful convergence},   some remarks are given here.
 By the relation (\ref{imp rea}) and by directly comparing (\ref{cs ei}) and (\ref{cs ei2}), it is clear that for $h=\eps\delta\kappa$ and for all $n\geq0$,
$$u(t_n,x)=w(\kappa_n\eps,x),\quad u^n(x)=w^{n}(x).$$
Therefore, the convergence of EP1-EP3 in   the original scaling \eqref{sch system} is equivalently presented as follows.
\begin{cor} \label{thm ful convergence2} (\textbf{Optimal convergence of algorithms for the original system.})
 For the methods  EP1-EP3 with a time step size $h\lesssim \eps$ applied to the original system  \eqref{sch system}, their  error bounds are given by
\begin{equation}\label{FEWV2}
 \begin{aligned}
 &\textmd{EP1}:\ \   \norm{(\Phi^{ h})^n(u^0)-u(t_n)}_{H^{\alpha-4}}
\lesssim  \frac{h^2}{\eps^2},\qquad \quad \alpha > \max(d/2+2,4),\\
&\textmd{EP2}:\ \     \norm{(\Phi^{ h})^n(u^0)-u(t_n)}_{H^{\alpha-6}}
\lesssim  \frac{h^2}{\eps}+ \frac{h^3}{\eps^3},\ \ \alpha > \max(d/2+2,6),\\
&\textmd{EP3}:\ \     \norm{(\Phi^{ h})^n(u^0)-u(t_n)}_{H^{\alpha-8}}
\lesssim  \frac{h^3}{\eps^2}+ \frac{h^4}{\eps^4},\ \ \alpha > \max(d/2+2,8),
 \end{aligned}
\end{equation}
where   $nh \leq T$.  The results of EP2 and EP3 can be respectively  given in the $H^{\alpha -4}$-norm
and $H^{\alpha -6}$-norm when $\varepsilon=1$.
\end{cor}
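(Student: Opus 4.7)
The plan is to reduce Corollary \ref{thm ful convergence2} directly to Theorem \ref{thm ful convergence} via the time rescaling $\kappa = t/\varepsilon$ introduced in \eqref{rescaling}. First I would verify that the scheme \eqref{cs ei} applied to \eqref{sch system} with stepsize $h$ is exactly conjugate, under the identification $w(\kappa,x) = u(\varepsilon\kappa,x)$, to the scheme \eqref{cs ei2} applied to \eqref{lon sch system} with stepsize $\delta\kappa = h/\varepsilon$. The key algebraic point is that
\begin{equation*}
\mathcal{V} = \ii h\mathcal{A} = \ii h \tfrac{1}{\varepsilon}\Delta = \ii\,\delta\kappa\,\Delta = \mathcal{W},
\end{equation*}
so all operator-valued coefficients $C_\tau$ and $A_{\tau,\sigma}$ match, and the prefactor of the nonlinearity satisfies $h = \varepsilon\,\delta\kappa$, which matches the $\varepsilon\,\delta\kappa$ in \eqref{cs ei2}. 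A simple induction on $n$ then gives $u^n(x) = w^n(x)$ for every $n$, and correspondingly $(\Phi^h)^n(u^0) = (\Phi^{\delta\kappa})^n(w^0)$, while $u(t_n) = u(nh) = w(n\delta\kappa) = w(\kappa_n)$.

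Next I would translate the hypotheses. The step-size restriction $\delta\kappa = T_0/N$ with $N\geq N_0$ (i.e.\ $\delta\kappa \lesssim 1$) becomes $h = \varepsilon T_0/N \lesssim \varepsilon$, which is exactly the assumption $h \lesssim \varepsilon$ in the corollary. The time-horizon condition $n\delta\kappa \leq T/\varepsilon$ becomes $nh \leq T$, matching the statement. Since the two iterates coincide, the Sobolev norms in which the error is measured are unchanged, and I can simply substitute $\delta\kappa = h/\varepsilon$ into the three bounds of Theorem \ref{thm ful convergence}:
\begin{equation*}
\delta\kappa^2 = \tfrac{h^2}{\varepsilon^2},\qquad \varepsilon\delta\kappa^2+\delta\kappa^3 = \tfrac{h^2}{\varepsilon}+\tfrac{h^3}{\varepsilon^3},\qquad \varepsilon\delta\kappa^3+\delta\kappa^4 = \tfrac{h^3}{\varepsilon^2}+\tfrac{h^4}{\varepsilon^4},
\end{equation*}
which yield precisely the three error estimates \eqref{FEWV2} for EP1, EP2 and EP3 respectively. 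The refined $\varepsilon=1$ statement for EP2 and EP3 follows from the corresponding refined statement in Theorem \ref{thm ful convergence}, since the rescaling is trivial when $\varepsilon=1$.

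There is essentially no hard step here: the whole argument is a transcription of Theorem \ref{thm ful convergence} under a linear change of time variable, and the only thing requiring any care is the scheme-level identification $u^n = w^n$, which in turn hinges on the fact that the exponential integrator \eqref{cs ei} was built out of $\varphi$-functions of $\ii h\mathcal{A}$ and a prefactor $h$ on the nonlinear integral in exactly the way that is covariant under the rescaling. In particular, no new convergence analysis, no new regularity argument, and no new use of Propositions \ref{capro}--\ref{pro bo} is needed beyond what has already been carried out in the proof of Theorem \ref{thm ful convergence}.
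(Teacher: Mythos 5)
Your proposal is correct and follows exactly the route the paper takes: the paper derives the corollary from Theorem \ref{thm ful convergence} by the same observation that $h=\eps\delta\kappa$ makes \eqref{cs ei} and \eqref{cs ei2} coincide (so $u^n=w^n$ and $u(t_n)=w(\kappa_n)$), after which the three bounds follow by substituting $\delta\kappa=h/\eps$. The only difference is that you spell out the conjugacy ($\mathcal{V}=\mathcal{W}$, the prefactor matching, the induction) in more detail than the paper, which simply asserts it.
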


%\begin{rem}
%It is noted that from the proof given blew, it follows that the integrator EP1 does not have the optimal convergence and its error estimate is
%\begin{equation*}%\label{FEWV}
%  \norm{(\Phi^{ \delta\kappa})^n(w^0)-w(t_n)}_{H^{\alpha-2m}}
%\lesssim  \delta\kappa^2+  N^{-m},\ \ nh \leq \frac{T}{\varepsilon}
%\end{equation*}
%with $\alpha > \max(d/2+2,3)$.
%\end{rem}

\subsection{Proof of Theorem \ref{thm ful convergence}}
In the light of  Proposition  \ref{pro bo}, it is obtained that  the coefficients of integrators EP1-EP3 are bounded as
$\norm{C_{\kappa}(\mathcal{W})}_{H^{\alpha}\hookrightarrow H^{\alpha}}\leq
1$ and $
\norm{A_{\tau,\sigma}(\mathcal{W})}_{H^{\alpha}\hookrightarrow
H^{\alpha}}\leq C_A,$  where the constant  $C_{A}$ is
independent of $\norm{\mathcal{W}}_{H^{\alpha}\hookrightarrow
H^{\alpha}}$.
For simplicity, the proof will be given only for  EP2
 because   with little modifications it can be adapted to EP1 and EP3. We begin with the local errors and stability of EP2.

\begin{lem} \label{lem loc} (\textbf{Local errors.})
%Plugging the exact solution of \eqref{sch ode} in the scheme
%\eqref{cs ei} yields the local errors
%$$\delta^{n+\tau}=w(t_n+\tau \delta\kappa)-C_{\kappa}(\mathcal{W})w(\kappa_n)-\varepsilon \delta\kappa \int_{0}^{1}A_{\tau,\sigma}(\mathcal{W})f(w(t_n+\sigma \delta\kappa))d\sigma,\ 0<\kappa\leq1.$$
 For the local errors
\begin{equation*}
\begin{aligned}\delta^{n+\tau}&:=\Phi^{\tau \delta\kappa}(w(\kappa_n))-w(\kappa_{n}+\tau \delta\kappa),\ \ \textmd{for}\ \  0<\tau<1,\\
\delta^{n+1}&:=\Phi^{ \delta\kappa}(w(\kappa_n))-w(\kappa_{n+1}),
\end{aligned}
\end{equation*}
%where $\Phi^{\kappa \delta\kappa}(w(\kappa_n))=C_{\kappa}(\mathcal{W})w(\kappa_n)+\varepsilon \delta\kappa \int_{0}^{1}A_{\tau,\sigma}(\mathcal{W})f(\Phi^{\sigma \delta\kappa}(w(\kappa_n)))d\sigma.$
there exits  $\widehat{\delta\kappa}_0>0$  independent of $\varepsilon$ such that for any
 $0 <\delta\kappa<\widehat{\delta\kappa}_0$, the following bounds hold for EP2
\begin{equation*}
\begin{aligned}
 &\|\delta^{n+\tau}\|_{H^{\alpha-2}}\lesssim \delta\kappa,\qquad\ \|\delta^{n+\tau}\|_{H^{\alpha-4}}\lesssim \delta\kappa^2, \quad \textmd{for}\ \  0<\tau<1, \\
 &\|\delta^{n+1}\|_{H^{\alpha-2}}\lesssim  \varepsilon \delta\kappa^2,
 \ \ \ \   \|\delta^{n+1}\|_{H^{\alpha-4}}\lesssim  \varepsilon \delta\kappa^3.
\end{aligned}
\end{equation*}
\end{lem}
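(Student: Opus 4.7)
The plan is to match one step of EP2 started from the exact value $w(\kappa_n)$ against the Duhamel representation
\begin{equation*}
w(\kappa_n+\tau\delta\kappa)=e^{\tau\mathcal{W}}w(\kappa_n)+\eps\delta\kappa\int_0^{\tau}e^{(\tau-\eta)\mathcal{W}}f(w(\kappa_n+\eta\delta\kappa))\,d\eta,
\end{equation*}
and to split the resulting local error into a \emph{linear-flow part} $(C_\tau(\mathcal{W})-e^{\tau\mathcal{W}})w(\kappa_n)$ and a \emph{nonlinear-integral part}. Before comparing, I would first establish existence, uniqueness and uniform (in $\eps,\delta\kappa$) boundedness in $H^{\alpha-2}$ of the implicit internal stages $U^{n+\sigma}$ by a Banach fixed-point argument on the defining equation \eqref{cs ei2}: Proposition \ref{pro bo} bounds $C_\tau(\mathcal{W})$ and $A_{\tau,\sigma}(\mathcal{W})$ uniformly in $\delta\kappa$, Proposition \ref{capro} supplies the Lipschitz constant of $f$, and the resulting contraction constant is strictly below $1$ for $\delta\kappa<\widehat{\delta\kappa}_0$.

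For the intermediate stages ($0<\tau<1$), the three fitting conditions $C_{c_j}(\mathcal{W})=e^{c_j\mathcal{W}}$ at $c_0=0,\ c_1=m,\ c_2=1$ let me write the linear-flow part as $\tau(\tau-m)(\tau-1)R(\tau,\mathcal{W})w(\kappa_n)$, with $R(\tau,\mathcal{W})=\mathcal{W}^3\tilde R(\tau,\mathcal{W})$ a bounded operator obtained from the Hermite remainder formula; by Proposition \ref{pro bo} this contributes $O(\delta\kappa^3)$ in $H^{\alpha-6}$, hence \emph{a fortiori} $O(\delta\kappa^2)$ in $H^{\alpha-4}$. The nonlinear-integral part is crudely bounded by $\eps\delta\kappa\,C_A\sup_\sigma\|f(U^{n+\sigma})\|_{H^{\alpha-2}}\lesssim\delta\kappa$ via Proposition \ref{capro}, yielding $\|\delta^{n+\tau}\|_{H^{\alpha-2}}\lesssim\delta\kappa$. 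To sharpen the nonlinear part to order $\delta\kappa^2$ in the weaker norm $H^{\alpha-4}$, I would Taylor-expand $f(w(\kappa_n+\eta\delta\kappa))$ and $f(U^{n+\sigma})$ around $w(\kappa_n)$, observe that the $O(\eps\delta\kappa)$ leading contributions cancel because the algebraic identity $\int_0^1 A_{\tau,\sigma}(0)\,d\sigma=\tau$ (a direct check from the $a_{ln}$ formulas of Algorithm \ref{alg:2}) matches $\int_0^{\tau}d\eta=\tau$, and then collect the remaining $O(\delta\kappa)$ factor at the price of two Sobolev indices through $\mathcal{W}=\ii\delta\kappa\Delta$.

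At $\tau=1$, the condition $C_1(\mathcal{W})=e^{\mathcal{W}}$ annihilates the linear-flow part, leaving
\begin{equation*}
\delta^{n+1}=\eps\delta\kappa\!\int_0^1\!A_{1,\sigma}(\mathcal{W})[f(U^{n+\sigma})-f(w(\kappa_n+\sigma\delta\kappa))]\,d\sigma+\eps\delta\kappa\!\int_0^1\![A_{1,\sigma}(\mathcal{W})-e^{(1-\sigma)\mathcal{W}}]f(w(\kappa_n+\sigma\delta\kappa))\,d\sigma.
\end{equation*}
The first integral absorbs the intermediate-stage bounds of the previous paragraph through the Lipschitz estimate of Proposition \ref{capro}, yielding $\eps\delta\kappa^2$ in $H^{\alpha-2}$ and $\eps\delta\kappa^3$ in $H^{\alpha-4}$. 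For the second (pure quadrature) integral, I would substitute the explicit $\varphi_j$-expansion of $a_{ln}(\mathcal{V})$ from Algorithm \ref{alg:2} and verify that the operator-valued quadrature rule $\sigma\mapsto A_{1,\sigma}(\mathcal{W})$ integrates polynomials in $\sigma$ of degree $\leq 1$ against $e^{(1-\sigma)\mathcal{W}}$ with a remainder of operator order $\mathcal{W}^2$; combined with a one-term Taylor expansion of $f(w(\kappa_n+\sigma\delta\kappa))$, this trades two Sobolev indices for one extra $\delta\kappa$ and produces the matching bounds $\eps\delta\kappa^2$ in $H^{\alpha-2}$ and $\eps\delta\kappa^3$ in $H^{\alpha-4}$.

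The main obstacle is precisely this last verification: the coefficients $a_{ln}(\mathcal{V})$ are \emph{a priori} fixed by the energy-preservation system \eqref{epcond}, so checking that they \emph{also} fulfil the two classical quadrature order conditions demands a patient algebraic manipulation of the combinations $\varphi_j(m\mathcal{V}),\varphi_j((1-m)\mathcal{V}),\varphi_j(\mathcal{V})$ at the nodes of EP2; the remainder of the proof is a mechanical Duhamel/Taylor expansion paired with Propositions \ref{capro} and \ref{pro bo}, together with the fixed-point bound on the internal stages.
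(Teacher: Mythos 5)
Your proposal is correct and follows essentially the same route as the paper's proof: Duhamel/Taylor expansion of both the exact flow and the scheme, the crude $O(\delta\kappa)$ bound on the stage errors upgraded to $O(\delta\kappa^2)$ in the weaker norm, the Lipschitz estimate of Proposition \ref{capro} to absorb the stage errors at $\tau=1$, and the quadrature order conditions $\int_0^1 A_{1,\sigma}\,d\sigma=\varphi_1(\mathcal{W})$ and $\int_0^1 A_{1,\sigma}\sigma\,d\sigma-\varphi_2(\mathcal{W})=O(\mathcal{W})$ traded against Sobolev indices via $\mathcal{W}=\ii\delta\kappa\Delta$. Your explicit two-term splitting at $\tau=1$ and the fixed-point argument for the implicit stages are slightly more detailed than the paper's presentation (which defers the fixed-point discussion to Section 4), and the only small caution is that the ``$O(\delta\kappa^3)$ in $H^{\alpha-6}$ hence a fortiori $O(\delta\kappa^2)$ in $H^{\alpha-4}$'' step for the interpolation remainder should be justified by the boundedness of the symbol $E_\tau(z)/z^2$ on $\ii\RR$ rather than by direct implication, but this is exactly the mechanism the paper uses.
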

 \begin{proof}
 Firstly,  according to the scheme \eqref{cs ei}, the Duhamel principle \eqref{Duhamel formu} and the fact that $$\norm{C_{\tau}(\mathcal{W})w(\kappa_n)-e^{\ii \tau \delta\kappa \triangle}w(\kappa_n)}_{H^{\alpha-2}}\lesssim \delta\kappa,$$ it is clearly that
$
\|\delta^{n+\tau}\|_{H^{\alpha-2}}\lesssim \delta\kappa.$
 Then  it follows from the Duhamel principle \eqref{Duhamel formu} that
\begin{equation*}
\begin{aligned}w(\kappa_n+\tau \delta\kappa)%=&e^{\ii \tau \delta\kappa \triangle}w(\kappa_n)+ \varepsilon\tau \delta\kappa \int_{0}^1
%e^{(1-\xi)\ii \tau \delta\kappa \triangle}f(w(\kappa_n+\xi \tau \delta\kappa))d\xi\\
=&e^{\ii \tau \delta\kappa \triangle}w(\kappa_n)+ \varepsilon\tau \delta\kappa \varphi_{1}(\tau
\mathcal{W})f(w(\kappa_n))\\
&+  \varepsilon\tau^2 \delta\kappa^2    \int_{0}^1 \int_{0}^1\xi
e^{(1-\xi)\ii \tau \delta\kappa \triangle}  f'(w(\kappa_n+\zeta \xi \tau \delta\kappa))w'(\kappa_n+\zeta \xi \tau \delta\kappa) d\zeta d\xi.\end{aligned}
\end{equation*}
For the integrator \eqref{cs ei}, we have
\begin{equation*}
\begin{aligned}\Phi^{\tau \delta\kappa}(w(\kappa_n))%=&C_{\tau}(\mathcal{W})w(\kappa_n)+\varepsilon \delta\kappa \int_{0}^{1}A_{\tau,\sigma}(\mathcal{W})f(\Phi^{\sigma \delta\kappa}(w(\kappa_n)))d\sigma\\
%=&C_{\tau}(\mathcal{W})w(\kappa_n)+\varepsilon \delta\kappa \int_{0}^{1}A_{\tau,\sigma}(\mathcal{W})f(w(\kappa_n+\sigma \delta\kappa))d\sigma+  \delta\kappa^2 C_1\\
=&C_{\tau}(\mathcal{W})w(\kappa_n)+ \varepsilon  \delta\kappa \int_{0}^{1}A_{\tau,\sigma}(\mathcal{W}) d\sigma f(w(\kappa_n))+ \delta\kappa^2 C_1\\&+  \varepsilon \delta\kappa^2   \int_{0}^1 \int_{0}^1\sigma   A_{\tau,\sigma}(\mathcal{W}) f'(w(\kappa_n+\zeta \sigma \delta\kappa))w'(\kappa_n+\zeta \sigma \delta\kappa) d\zeta d\sigma\end{aligned}
\end{equation*}
with $\norm{C_1}_{H^{\alpha-4}}\lesssim 1$, where we replace $\Phi^{\sigma \delta\kappa}(w(\kappa_n))$ by $w(\kappa_n+\sigma \delta\kappa)$ in the numerical scheme and the error brought by this is denoted by $\delta\kappa^2 C_1$.
The combination of the above two equalities yields $\|\delta^{n+\tau}\|_{H^{\alpha-4}}\lesssim \delta\kappa^2$
for $0<\tau<1$,
where the inequality $$\norm{\int_{0}^{1}A_{\tau,\sigma}(\mathcal{W}) d\sigma-\tau   \varphi_{1}(\tau
\mathcal{W})}_{H^{\alpha-4}}\lesssim \delta\kappa$$ and the result of Lagrange interpolation have been used.

Then by the same arguments given above and by noticing $C_{1}(\mathcal{W})=e^{\ii \delta\kappa \triangle}$, the bound of $\|\delta^{n+1}\|_{H^{\alpha-2}}$ can be derived.

Finally, in the light of
\begin{equation*}
\begin{aligned}w(\kappa_{n+1})
=&e^{ \ii \delta\kappa \triangle}w(\kappa_n)+ \varepsilon \delta\kappa \varphi_{1}(
\mathcal{W})f(w(\kappa_n))+  \varepsilon \delta\kappa^2 \varphi_{2}(
\mathcal{W})f'(w(\kappa_n))w'(\kappa_n) \\
&+\varepsilon \delta\kappa^3  \int_{0}^1 \int_{0}^1(1-\zeta)\xi^2
e^{(1-\xi) \ii \delta\kappa \triangle}\big( f''(w(\kappa_n+\zeta \xi  \delta\kappa))(w'(\kappa_n+\zeta \xi  \delta\kappa))^2\\
&\qquad\qquad\qquad\ \ \ \ + f'(w(\kappa_n+\zeta \xi  \delta\kappa))w''(\kappa_n+\zeta \xi  \delta\kappa)\big)d\zeta d\xi, \end{aligned}
\end{equation*}
and
\begin{equation*}
\begin{aligned}\Phi^{ \delta\kappa}(w(\kappa_n))
=&e^{ \ii \delta\kappa \triangle}w(\kappa_n)+ \varepsilon  \delta\kappa \int_{0}^{1}A_{1,\sigma}(\mathcal{W}) d\sigma f(w(\kappa_n))+  \varepsilon \delta\kappa^2 \int_{0}^{1}\sigma A_{1,\sigma}(\mathcal{W}) d\sigma f'(w(\kappa_n))w'(\kappa_n) \\
&+ \varepsilon \delta\kappa^3 C_2+\varepsilon  \delta\kappa^3  \int_{0}^1 \int_{0}^1(1-\zeta)\sigma ^2 A_{1,\sigma}(\mathcal{W})\big( f''(w(\kappa_n+\zeta \sigma \delta\kappa))(w'(\kappa_n+\zeta \sigma \delta\kappa))^2\\
&\qquad\qquad\qquad\qquad\qquad \ \ \ + f'(w(\kappa_n+\zeta \sigma \delta\kappa))w''(\kappa_n+\zeta \sigma \delta\kappa)\big)d\zeta d\sigma,\end{aligned}
\end{equation*}
with $\norm{C_2}_{H^{\alpha-4}}\lesssim 1$,
 we obtain the bound of $\|\delta^{n+1}\|_{H^{\alpha-4}}$ as follows
\[
\begin{aligned} &\|\delta^{n+1}\|_{H^{\alpha-4}}
\lesssim &
 \sum\limits_{j=0}^{1}\eps\delta\kappa^{j+1}\norm{\varphi_{j+1}(
\mathcal{W})-\int_{0}^{1}A_{1,\sigma}(\mathcal{W})\dfrac{\sigma^j}{j!}{\rm
 d}\sigma}_{H^{\alpha-4}}+\varepsilon \delta\kappa^3.
\end{aligned}
\]
Using the results of $A_{1,\sigma}$:
$$\norm{\int_{0}^{1}A_{1,\sigma}(\mathcal{W}) d\sigma- \varphi_{1}(
\mathcal{W})}_{H^{\alpha-4}}\lesssim 0, \ \ \norm{\int_{0}^{1}A_{1,\sigma}(\mathcal{W}) \sigma d\sigma- \varphi_{2}(
\mathcal{W})}_{H^{\alpha-4}}\lesssim \delta\kappa,$$
 the last
local error  can be bounded.
  \hfill\end{proof}
\begin{lem} (\textbf{Stability.}) Consider the  abbreviations
$R=2K\norm{w^0}_{H^{\alpha}}, \ \mathcal{H}^s_R=\{w\in H^{s},\ \norm{w}_{H^{s}}\leq R\}.$
For the numerical solution $\Phi^{\tau \delta\kappa}$ of  EP2 applied to $v, w \in \mathcal{H}^{\alpha -2}_{3R/4}$, there exist $\varepsilon_0>0$ and $\delta\kappa_0>0$ independent of $\varepsilon$ such that for any $0 <\varepsilon<\varepsilon_0$ and $0 <\delta\kappa<\delta\kappa_0$, it holds that $\Phi^{\tau \delta\kappa}(v),\Phi^{\tau \delta\kappa}(w)\in  \mathcal{H}^{\alpha -2}_{R}$ and
\begin{equation}\label{sta lem}
\begin{aligned}
 &\|\Phi^{\tau \delta\kappa}(v)-\Phi^{\tau \delta\kappa}(w)\|_{H^{\beta}}\leq  e^{\varepsilon \tau \delta\kappa L C_A}\|v-w\|_{H^{\beta}},\ \ 0\leq \tau\leq 1,\\
  &\|(\Phi^{ \delta\kappa}(v)-e^{\ii \delta\kappa \triangle}v)-(\Phi^{ \delta\kappa}(w)-e^{\ii \delta\kappa \triangle}w)\|_{H^{\beta}}\leq \varepsilon \delta\kappa L C_A e^{\varepsilon \tau \delta\kappa L C_A}\|v-w\|_{H^{\beta}},
\end{aligned}
\end{equation}
where  $\beta \in [0, \alpha -2]$.
\end{lem}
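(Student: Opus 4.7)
The plan is to derive both inequalities of the lemma directly from the integral equation \eqref{cs ei2} defining $\Phi^{\tau\delta\kappa}$, using the operator bounds $\norm{C_\tau(\mathcal{W})}\leq 1$ and $\norm{A_{\tau,\sigma}(\mathcal{W})}\leq C_A$ from Proposition \ref{pro bo}, together with the Lipschitz and boundedness estimates of $f$ from Proposition \ref{capro}. Before comparing two numerical solutions, I first need to know that $\Phi^{\tau\delta\kappa}(v)$ actually exists and lies in $\mathcal{H}^{\alpha-2}_R$ whenever $v\in\mathcal{H}^{\alpha-2}_{3R/4}$.

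\textbf{Well-posedness and a priori bound.} I view the scheme as a fixed-point equation for $W(\tau):=\Phi^{\tau\delta\kappa}(v)$,
\[
W(\tau)=C_\tau(\mathcal{W})v+\varepsilon\delta\kappa\int_0^1 A_{\tau,\sigma}(\mathcal{W})f(W(\sigma))\,d\sigma.
\]
On the space $C([0,1],\mathcal{H}^{\alpha-2}_R)$, the right-hand side defines a self-map provided $\norm{v}_{H^{\alpha-2}}+\varepsilon\delta\kappa C_A M\leq R$, which holds whenever $\varepsilon\delta\kappa\leq R/(4 C_A M)$. The Lipschitz bound of Proposition \ref{capro} then makes it a contraction with constant $\varepsilon\delta\kappa L C_A<1$ under a similar smallness condition. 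Banach's fixed-point theorem delivers existence, uniqueness, and $\Phi^{\tau\delta\kappa}(v)\in\mathcal{H}^{\alpha-2}_R$; the same argument applies to $w$.

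\textbf{Lipschitz estimate.} Subtracting the integral equations for $v$ and $w$ gives
\[
\Phi^{\tau\delta\kappa}(v)-\Phi^{\tau\delta\kappa}(w)=C_\tau(\mathcal{W})(v-w)+\varepsilon\delta\kappa\int_0^1 A_{\tau,\sigma}(\mathcal{W})\bigl(f(\Phi^{\sigma\delta\kappa}(v))-f(\Phi^{\sigma\delta\kappa}(w))\bigr)d\sigma.
\]
Taking the $H^\beta$-norm and denoting $g(\tau):=\norm{\Phi^{\tau\delta\kappa}(v)-\Phi^{\tau\delta\kappa}(w)}_{H^\beta}$, Propositions \ref{capro}--\ref{pro bo} yield
\[
g(\tau)\leq\norm{v-w}_{H^\beta}+\varepsilon\delta\kappa L C_A\int_0^1 g(\sigma)\,d\sigma.
\]
Because the right-hand side is constant in $\tau$, passing to the supremum and absorbing the $g$-term (admissible since $\varepsilon\delta\kappa L C_A<1$) gives $\sup_\tau g(\tau)\leq \norm{v-w}_{H^\beta}/(1-\varepsilon\delta\kappa L C_A)$; the elementary estimate $1/(1-x)\leq e^{cx}$ for small $x\geq 0$ then produces the stated exponential bound.

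\textbf{Second estimate and main obstacle.} Since Definition \ref{def:CSEI} requires $c_s=1$, we have $C_1(\mathcal{W})=e^{\ii\delta\kappa\triangle}$, so subtracting $e^{\ii\delta\kappa\triangle}v$ and $e^{\ii\delta\kappa\triangle}w$ eliminates the linear propagator and leaves
\[
\bigl(\Phi^{\delta\kappa}(v)-e^{\ii\delta\kappa\triangle}v\bigr)-\bigl(\Phi^{\delta\kappa}(w)-e^{\ii\delta\kappa\triangle}w\bigr)=\varepsilon\delta\kappa\int_0^1 A_{1,\sigma}(\mathcal{W})\bigl(f(\Phi^{\sigma\delta\kappa}(v))-f(\Phi^{\sigma\delta\kappa}(w))\bigr)d\sigma,
\]
whose $H^\beta$-norm is bounded by $\varepsilon\delta\kappa L C_A\sup_{\sigma}g(\sigma)$; inserting the first inequality concludes the proof. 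The only delicate point is the choice of $\varepsilon_0$ and $\delta\kappa_0$ independently of $\varepsilon$; because the nonlinear part enters the scheme only through the product $\varepsilon\delta\kappa$, both smallness requirements are simultaneously met by picking $\delta\kappa_0$ small enough (with $\varepsilon_0$ arbitrary), so that uniformity poses no genuine obstruction.
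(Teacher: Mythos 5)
Your proof is correct and rests on the same basic mechanism as the paper's --- subtract the two integral equations, use $\norm{C_\tau(\mathcal{W})}_{H^{\beta}\hookrightarrow H^{\beta}}\le 1$, $\norm{A_{\tau,\sigma}(\mathcal{W})}_{H^{\beta}\hookrightarrow H^{\beta}}\le C_A$ and the Lipschitz bound for $f$, then close the resulting integral inequality --- but it differs in two worthwhile respects. First, you establish existence of the implicit stages and the a priori bound $\Phi^{\tau\delta\kappa}(v)\in\mathcal{H}^{\alpha-2}_R$ by a Banach fixed-point argument \emph{before} comparing solutions; the paper skips well-posedness entirely and recovers the a priori bound afterwards by setting $w=0$ in the stability estimate (implicitly using $f(0)=0$, hence $\Phi^{\tau\delta\kappa}(0)=0$). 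Second, because the stage integral in \eqref{cs ei2} runs over all of $\sigma\in[0,1]$ rather than $[0,\tau]$, the inequality $g(\tau)\le\norm{v-w}_{H^{\beta}}+\varepsilon\delta\kappa LC_A\int_0^1 g(\sigma)\,d\sigma$ has no causal structure in $\tau$; your direct absorption, giving $\sup_\tau g\le(1-\varepsilon\delta\kappa LC_A)^{-1}\norm{v-w}_{H^{\beta}}$, is the honest way to close it, whereas the paper invokes Gronwall's lemma and then ``modifies $\delta\kappa$ to $\tau\delta\kappa$'' to produce a $\tau$-dependent exponent, a step not really supported by the structure of the scheme. The one blemish in your write-up is the final constant: since $1/(1-x)\ge e^{x}$ for $x\in[0,1)$, your bound is of the form $e^{c\,\varepsilon\delta\kappa LC_A}$ with some $c>1$ (e.g. $c=2$ once $\varepsilon\delta\kappa LC_A\le 1/2$), not the literal $e^{\varepsilon\tau\delta\kappa LC_A}$ of the statement; this is immaterial for every subsequent use of the lemma, where only a per-step factor of the form $e^{C\varepsilon\delta\kappa}$ is needed.
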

 \begin{proof}
 Employing the definition of the method, the isometry $C_{\tau}(\mathcal{W})$ and the Lipschitz estimate of $f$, one gets
 \begin{equation*}
\begin{aligned}
 &\|\Phi^{\tau \delta\kappa}(v)-\Phi^{\tau \delta\kappa}(w)\|_{H^{\beta}}\leq  \|v-w\|_{H^{\beta}}+\varepsilon  L C_A \int_{0}^{\delta\kappa}
 \|\Phi^{\sigma }(v)-\Phi^{\sigma }(w)\|_{H^{\beta}}d\sigma,
\end{aligned}
\end{equation*}
as long as $\Phi^{\sigma }(v),\ \Phi^{\sigma }(w)\in \mathcal{H}^{\alpha -2}_{R}$ for $\sigma\in[0,\delta\kappa]$.
Considering $\tau=1$ and using the Gronwall's lemma yields
$$\|\Phi^{ \delta\kappa}(v)-\Phi^{ \delta\kappa}(w)\|_{H^{\beta}}\leq  e^{\varepsilon \delta\kappa L C_A}\|v-w\|_{H^{\beta}},$$ which gives the first statement of \eqref{sta lem} by modifying $\delta\kappa$ to $\tau \delta\kappa$. Setting in particular $w = 0$ implies  $\Phi^{\tau \delta\kappa}(v)\in \mathcal{H}^{\alpha -2}_{R}$ under the condition that $0 <\delta\kappa<\delta\kappa_0$.
It is also direct to have
 \begin{equation*}
\begin{aligned}
 &\|(\Phi^{ \delta\kappa}(v)-e^{\ii \delta\kappa \triangle}v)-(\Phi^{ \delta\kappa}(w)-e^{\ii \delta\kappa \triangle}w)\|_{H^{\beta}}\leq  \varepsilon \delta\kappa L C_A \|\Phi^{\tau \delta\kappa}(v)-\Phi^{\tau \delta\kappa}(w)\|_{H^{\beta}}.
\end{aligned}
\end{equation*}
The second result of \eqref{sta lem} follows immediately  from this inequality and the first statement.
  \hfill\end{proof}

  We are now in a position to prove Theorem \ref{thm ful convergence}.

\textbf{Proof of Theorem \ref{thm ful convergence}.}
   \begin{proof}
    \textbf{Boundedness of the method.}
    The stated  local errors and stability imply
\begin{equation*}
\begin{aligned} &\norm{(\Phi^{ \delta\kappa})^n(w^0)-w(\kappa_n)}_{H^{\alpha-2}}=\norm{\sum_{l=1}^{n}\big((\Phi^{\delta\kappa})^{n-l}\Phi^{\delta\kappa}(w(\kappa_{l-1}))
-(\Phi^{\delta\kappa})^{n-l}(w(\kappa_{l}))\big)}_{H^{\alpha-2}}\\
\leq&\sum_{l=1}^{n} e^{\varepsilon (n-l)  \delta\kappa L C_A}\norm{ \delta^l }_{H^{\alpha-2}}\leq C \varepsilon \delta\kappa^2 \sum_{l=1}^{n} e^{\varepsilon (n-l)  \delta\kappa L C_A}\leq \tilde{C} \frac{e^{LTC_A}-1}{  L  } \delta\kappa.\\
%\leq &C \varepsilon \delta\kappa^2 \frac{e^{LTC_A}-1}{e^{\varepsilon  \delta\kappa L C_A}-1}\leq C \varepsilon \delta\kappa^2 \frac{e^{LTC_A}-1}{ \varepsilon  \delta\kappa L C_A } = \tilde{C} \frac{e^{LTC_A}-1}{  L  } \delta\kappa.
\end{aligned}
\end{equation*}
Therefore, there exist    $\widetilde{\delta\kappa}_0>0$  independent of $\varepsilon$ such that  $0 <\delta\kappa<\widetilde{\delta\kappa}_0$, the time-discrete
solutions  satisfy  $(\Phi^{ \delta\kappa})^n(w^0)  \in \mathcal{H}^{\alpha -2}_{3R/4}$, where $w(\kappa_n) \in \mathcal{H}^{\alpha -2}_{R/2}$ has been used here.
 Using a stability estimate with
respect to the $H^{\alpha -4}$-norm and considering the local error result in this norm yields
$$\norm{(\Phi^{ \delta\kappa})^n(w^0)-w(\kappa_n)}_{H^{\alpha-4}} \leq  \tilde{C} \frac{e^{LTC_A}-1}{  L  } \delta\kappa^2.$$ %This shows the normal convergence result of EP2 given in \eqref{normal error}.

   \textbf{Refined local error.}
For the method \eqref{cs ei}, we expand the nonlinear function $f$ at $C_{\xi}w(\kappa_n)$ and then get
\begin{equation*}
\begin{aligned}\Phi^{ \delta\kappa}(w(\kappa_n))
=&e^{\ii \delta\kappa \triangle}w(\kappa_n)+ \varepsilon  \delta\kappa \int_{0}^{1}A_{1,\xi} f(C_{\xi}w(\kappa_n)) d\xi \\&+  \varepsilon^2 \delta\kappa^2   \int_{0}^1 \int_{0}^1 A_{1,\xi}  A_{\xi,\sigma}  f'(C_{\xi}w(\kappa_n)) f(\Phi^{\sigma \delta\kappa}(w(\kappa_n))) d\xi d\sigma \\&+
\varepsilon^3 \delta\kappa^3   \int_{0}^1 \int_{0}^1(1-\zeta) A_{1,\xi}  f''\Big(C_{\xi}w(\kappa_n)+\zeta \varepsilon \delta\kappa\int_{0}^1 A_{\xi,\sigma} f(\Phi^{\sigma \delta\kappa}(w(\kappa_n)))d\sigma\Big)\\
&\qquad\qquad\qquad\Big(\int_{0}^1 A_{\xi,\sigma} f(\Phi^{\sigma \delta\kappa}(w(\kappa_n)))d\sigma\Big)^2 d\xi d\zeta\\
=&e^{\ii \delta\kappa \triangle}w(\kappa_n)+ \varepsilon  \delta\kappa \int_{0}^{1}A_{1,\xi} f(C_{\xi}w(\kappa_n)) d\xi \\&+  \varepsilon^2 \delta\kappa^2   \int_{0}^1 \int_{0}^1 A_{1,\xi}  A_{\xi,\sigma}  f'(C_{\xi}w(\kappa_n)) f(C_{\sigma}w(\kappa_n)) d\xi d\sigma +
\varepsilon^3 \delta\kappa^3\Xi_{\Phi},\end{aligned}
\end{equation*}
with \begin{equation*}
\begin{aligned} \Xi_{\Phi}
=&\int_{0}^1 \int_{0}^1 \int_{0}^1 A_{1,\xi}  A_{\xi,\sigma}  f'\big(C_{\xi}w(\kappa_n))  f'(C_{\sigma}w(\kappa_n)+\zeta
(\Phi^{\sigma \delta\kappa}(w(\kappa_n))-C_{\sigma}w(\kappa_n))\big) \\
&\qquad\qquad\qquad\Big(\int_{0}^1 A_{\sigma,\varsigma} f(\Phi^{\varsigma \delta\kappa}(w(\kappa_n)))d\varsigma\Big) d \zeta d\xi d\sigma
\\&+ \int_{0}^1 \int_{0}^1(1-\zeta) A_{1,\xi}  f''\Big(C_{\xi}w(\kappa_n)+\zeta \varepsilon \delta\kappa\int_{0}^1 A_{\xi,\sigma} f(\Phi^{\sigma \delta\kappa}(w(\kappa_n)))d\sigma\Big)\\
&\qquad\qquad\qquad\Big(\int_{0}^1 A_{\xi,\sigma} f(\Phi^{\sigma \delta\kappa}(w(\kappa_n)))d\sigma\Big)^2 d\xi d\zeta.\end{aligned}
\end{equation*}
For the exact solution \eqref{Duhamel formu}, similarly we  obtain its expansion as
\begin{equation*}
\begin{aligned}w(\kappa_{n+1})=&e^{\ii \delta\kappa \triangle}w(\kappa_n)+ \varepsilon \delta\kappa \int_{0}^1
e^{(1-\xi)\ii \delta\kappa \triangle}f(e^{\ii \xi  \delta\kappa \triangle} w(\kappa_n))d\xi\\
 &+\varepsilon^2 \delta\kappa^2 \int_{0}^1 \int_{0}^1 \xi
e^{(1-\xi)\ii \delta\kappa \triangle}f'(e^{\ii \xi  \delta\kappa \triangle} w(\kappa_n)) e^{(1-\sigma)\ii\xi \delta\kappa \triangle}f(e^{ \ii\sigma  \delta\kappa \triangle} w(\kappa_n)) d\xi d \sigma+ \varepsilon^3 \delta\kappa^3 \Xi_{w},\end{aligned}
\end{equation*}
with \begin{equation*}
\begin{aligned} \Xi_{w}
=&\int_{0}^1 \int_{0}^1\int_{0}^1 e^{(1-\xi)  \ii \delta\kappa \triangle} e^{(1-\sigma)\xi\ii \delta\kappa \triangle}   f'(e^{\xi\ii \delta\kappa \triangle} w(\kappa_n))
f'\big( e^{ \ii\sigma  \delta\kappa \triangle} w(\kappa_n) \\
&+\zeta (w(\kappa_n+\sigma \delta\kappa-e^{ \ii\sigma  \delta\kappa \triangle} w(\kappa_n))\big)\Big(\int_{0}^1 e^{(1-\varsigma)\ii \sigma \delta\kappa \triangle} f(w(\kappa_n+\varsigma \delta\kappa))d\varsigma\Big)  d \zeta d\xi d\sigma
\\&+ \int_{0}^1 \int_{0}^1(1-\zeta) e^{(1-\xi)\ii  \delta\kappa \triangle}  f''\Big(e^{\xi \ii \delta\kappa \triangle}w(\kappa_n)+\zeta \varepsilon \delta\kappa\int_{0}^1 e^{(1-\sigma)\ii \xi \delta\kappa \triangle} f(w(\kappa_n+\sigma \delta\kappa))d\sigma\Big)\\
&\qquad\qquad\quad\Big(\int_{0}^1 e^{(1-\sigma)\ii \xi \delta\kappa \triangle} f(w(\kappa_n+\sigma \delta\kappa))d\sigma\Big)^2 d\xi d\zeta.\end{aligned}
\end{equation*}
  Then the local error $\delta^{n+1}$ can be refined as \begin{equation}\label{ref locerr}\delta^{n+1}=\varepsilon \delta\kappa  \Psi(\kappa_n)+\varepsilon^2 \delta\kappa^2 \Delta(\kappa_n),\end{equation}
where
\begin{equation*}
\begin{aligned}\Psi(\kappa_n)=&\int_{0}^{1}A_{1,\xi} f(C_{\xi}w(\kappa_n)) d\xi-\int_{0}^1
e^{(1-\xi) \ii \delta\kappa \triangle}f(e^{ \xi  \ii \delta\kappa \triangle} w(\kappa_n))d\xi,\\
\Delta(\kappa_n)=& \int_{0}^1 \int_{0}^1 A_{1,\xi}  A_{\xi,\sigma}  f'(C_{\xi}w(\kappa_n)) f(C_{\sigma}w(\kappa_n)) d\xi d\sigma \\&
-\int_{0}^1 \int_{0}^1 \xi
e^{(1-\xi) \ii \delta\kappa \triangle}f'(e^{ \xi \ii \delta\kappa \triangle} w(\kappa_n)) e^{(1-\sigma)\xi \ii \delta\kappa \triangle}f(e^{\ii\sigma  \delta\kappa \triangle} w(\kappa_n)) d\xi d \sigma+
\varepsilon \delta\kappa   \Xi_{\Phi}-
\varepsilon \delta\kappa   \Xi_{w}.
 \end{aligned}
\end{equation*}
Concerning the previous local errors given in Lemma \ref{lem loc}, one has
$$
\|\Psi(\kappa_n)\|_{H^{\alpha-4}}\lesssim   \delta\kappa^2,\quad \ \|\Delta(\kappa_n)\|_{H^{\alpha-4}}\lesssim   \delta\kappa.$$

    \textbf{Refined convergence over one period.} In this part, we consider convergence over one period, that is $n\delta\kappa=T_0=1$. For the global error
    \[
\begin{aligned} (\Phi^{ \delta\kappa})^n(w^0)-w(\kappa_n)&=\sum_{l=1}^{n}\big((\Phi^{\delta\kappa})^{n-l}\Phi^{\delta\kappa}(w(\kappa_{l-1}))-(\Phi^{\delta\kappa})^{n-l}(w(\kappa_{l}))\big),
\end{aligned}
\]
 we introduce $\Theta^{\delta\kappa}_{n-l}:=(\Phi^{\delta\kappa})^{n-l}-e^{\ii (n-l) \delta\kappa \triangle}$ and then rewrite it as
   \begin{equation}\label{global err}
\begin{aligned} (\Phi^{ \delta\kappa})^n(w^0)-w(\kappa_n)&=\underbrace{\sum_{l=1}^{n}e^{\ii (n-l) \delta\kappa \triangle}
\delta^{l}}_{\mathbb{E}_1}+\underbrace{\sum_{l=1}^{n}
\big(\Theta^{\delta\kappa}_{n-l}(\Phi^{\delta\kappa}(w(\kappa_{l-1})))-\Theta^{\delta\kappa}_{n-l}(w(\kappa_{l}))\big)}_{\mathbb{E}_2}.
\end{aligned}
\end{equation}
For the part $\mathbb{E}_2$, we first estimate
   \begin{equation*}
\begin{aligned} &\norm{\Theta^{\delta\kappa}_{l}v-\Theta^{\delta\kappa}_{l}w}_{H^{\beta}}= \norm{(\Phi^{\delta\kappa})^{l}v-e^{\ii l \delta\kappa \triangle}v-(\Phi^{\delta\kappa})^{l}w+e^{\ii l \delta\kappa \triangle}w}_{H^{\beta}}\\
\leq&\sum_{k=1}^l\norm{\Theta^{\delta\kappa}_{1} (\Phi^{\delta\kappa})^{k-1}v-\Theta^{\delta\kappa}_{1} (\Phi^{\delta\kappa})^{k-1}w}_{H^{\beta}}
\leq  \varepsilon \delta\kappa L C_A e^{\varepsilon \tau \delta\kappa L C_A}\sum_{k=1}^l \norm{(\Phi^{\delta\kappa})^{k-1}v-(\Phi^{\delta\kappa})^{k-1}w}_{H^{\beta}}\\
\leq& \varepsilon \delta\kappa L C_A e^{\varepsilon \tau \delta\kappa L C_A}\sum_{k=1}^l e^{\varepsilon (k-1) \delta\kappa L C_A} \norm{v-w}_{H^{\beta}}
\leq  \varepsilon   L C_A T_0 e^{\varepsilon   L C_A T_0} \norm{v-w}_{H^{\beta}}.
\end{aligned}
\end{equation*}
Then the following bound holds
   \begin{equation}\label{E2}
\begin{aligned} \norm{\mathbb{E}_2}_{H^{\alpha-4}}\leq  \varepsilon L C_A T_0 e^{\varepsilon   L C_A T_0}\sum_{l=1}^{n} \norm{\delta^{l}}_{H^{\alpha-4}}
\lesssim  \varepsilon^2 \delta\kappa^2.
\end{aligned}
\end{equation}
For the part $\mathbb{E}_1$, we use the refined local error \eqref{ref locerr} and then have
   \begin{equation}\label{E1}
\begin{aligned} \mathbb{E}_1=\sum_{l=1}^{n}e^{\ii (n-l) \delta\kappa \triangle}  \varepsilon \delta\kappa  \Psi(\kappa_{l-1})+\sum_{l=1}^{n}e^{\ii (n-l) \delta\kappa \triangle}\varepsilon^2 \delta\kappa^2 \Delta(\kappa_{l-1}).
\end{aligned}
\end{equation}
According to \eqref{global err}-\eqref{E1} and the following bound
 \begin{equation*}
\begin{aligned} \norm{\sum_{l=1}^{n}e^{\ii (n-l) \delta\kappa \triangle}\varepsilon^2 \delta\kappa^2 \Delta(\kappa_{l-1})}_{H^{\alpha-4}}
\leq \varepsilon^2 \delta\kappa^3 \sum_{l=1}^{n} \norm{e^{\ii (n-l) \delta\kappa \triangle} }_{H^{\alpha-4}}
\lesssim   \varepsilon^2 \delta\kappa^2,
\end{aligned}
\end{equation*}
the global error is bounded by
 \begin{equation}\label{for gerr}
\begin{aligned} \norm{(\Phi^{ \delta\kappa})^n(w^0)-w(\kappa_n)}_{H^{\alpha-4}}
\lesssim   \varepsilon \delta\kappa  \norm{\sum_{l=1}^{n}e^{\ii (n-l) \delta\kappa \triangle}   \Psi(\kappa_{l-1}) }_{H^{\alpha-4}}+ \varepsilon^2 \delta\kappa^2.
\end{aligned}
\end{equation}

In what follows, we derive the optimal bound for $\varepsilon \delta\kappa\norm{\sum_{l=1}^{n}e^{\ii (n-l) \delta\kappa \triangle}   \Psi(\kappa_{l-1}) }_{H^{\alpha-4}}$, which satisfies   \begin{equation*}
\begin{split}& \varepsilon \delta\kappa\norm{\sum_{l=1}^{n}e^{\ii (n-l) \delta\kappa \triangle}   \Psi(\kappa_{l-1}) }_{H^{\alpha-4}}\\
\lesssim & \varepsilon \delta\kappa\norm{  \sum_{l=1}^{n}e^{\ii (n-l) \delta\kappa \triangle}   \int_{0}^{1}A_{1,\xi} f(C_{\xi}e^{\ii (l-1) \delta\kappa \triangle} w_0) d\xi-\varepsilon \sum_{l=1}^{n}e^{\ii (n-l+1) \delta\kappa \triangle} \int_{0}^{\delta\kappa}
e^{- \ii \xi \triangle}f(e^{    \ii \xi \triangle} e^{\ii (l-1) \delta\kappa \triangle} w_0 )d\xi}  _{H^{\alpha-4}} \\&+ \varepsilon^2 \delta\kappa^2\\
\lesssim &\norm{\varepsilon \delta\kappa  \sum_{l=1}^{n}e^{-\ii  l  \delta\kappa \triangle}   \int_{0}^{1}A_{1,\xi} f(C_{\xi}e^{\ii (l-1) \delta\kappa \triangle} w_0) d\xi- \varepsilon \int_{0}^1
e^{- \ii \xi \triangle}f(e^{    \ii \xi \triangle}  w_0 )d\xi}_{H^{\alpha-4}}+ \varepsilon^2 \delta\kappa^2.
\end{split}
\end{equation*}
Here we used the result $\norm{w(\kappa_{l-1})-e^{\ii (l-1) \delta\kappa \triangle} w_0}_{H^{\alpha-4}}\lesssim \varepsilon.$
We first consider Fourier expansion $F_{\xi }(w)=\sum_{k\in \mathbb{Z}}e^{\ii 2 k \pi\xi  }\hat{F}_{k}(w)$ of $F_{\kappa }(w):=e^{-\ii \xi \triangle}f(e^{\ii \xi   \triangle} w)$,
which yields that  $\int_{0}^1
e^{- \ii \xi \triangle}f(e^{    \ii \xi \triangle}  w_0 )d\xi=\hat{F}_{0}(w_0).$ Then let $G_{l\delta\kappa}(w)=
e^{-\ii  l  \delta\kappa \triangle}   \int_{0}^{1}A_{1,\xi} f(C_{\xi}e^{\ii l \delta\kappa \triangle} w) d\xi$ and the Fourier expansion of $G_{l\delta\kappa}(w)$
is given by  $G_{l\delta\kappa}(w)=\sum_{k\in \mathbb{Z}}e^{\ii 2 k \pi l\delta\kappa }\hat{G}_{k}(w).$
Therefore, it is obtained that
 \begin{equation*}
\begin{aligned} &\varepsilon \delta\kappa  \sum_{l=1}^{n}e^{-\ii  l  \delta\kappa \triangle}   \int_{0}^{1}A_{1,\xi} f(C_{\xi}e^{\ii (l-1) \delta\kappa \triangle} w_0) d\xi
=\varepsilon \delta\kappa e^{-\ii    \delta\kappa \triangle}  \sum_{l=0}^{n-1}\sum_{k\in \mathbb{Z}}e^{\ii 2 k \pi l\delta\kappa }\hat{G}_{k}(w)\\
=&\varepsilon   e^{-\ii    \delta\kappa \triangle} \sum_{k\in \mathbb{Z}}\Big(\frac{1}{n} \sum_{l=0}^{n-1}e^{\ii 2 k \pi l\delta\kappa }\hat{G}_{k}(w)\Big)
=\varepsilon   e^{-\ii    \delta\kappa \triangle} \sum_{k\in \mathbb{Z}} \hat{G}_{nk}(w).
\end{aligned}
\end{equation*}
Based on the above results, it follows that
 \begin{equation}\label{For s}
\begin{aligned}& \varepsilon \delta\kappa\norm{\sum_{l=1}^{n}e^{\ii (n-l) \delta\kappa \triangle}   \Psi(\kappa_{l-1}) }_{H^{\alpha-6}}\\
\lesssim  &\varepsilon \norm{\hat{F}_{0}(w_0)-  e^{-\ii    \delta\kappa \triangle}   \hat{G}_{0}(w) }_{H^{\alpha-6}}+\varepsilon \norm{   \sum_{k\in \mathbb{Z}^{*}} \hat{G}_{nk}(w)}_{H^{\alpha-6}}+ \varepsilon^2 \delta\kappa^2\\
\lesssim &\varepsilon \norm{\int_{0}^{1}e^{ - \ii \xi   \triangle}f(e^{   \ii \xi   \triangle} w_0)d \xi-  \int_{0}^{1}e^{ - \ii \xi   \triangle}\Big[
e^{-\ii    \delta\kappa \triangle} \int_{0}^{1}A_{1,\xi} f(C_{\xi}e^{\ii \xi \triangle} w_0) d\xi\Big] d \xi}_{H^{\alpha-6}}\\
&+\varepsilon \delta\kappa^3+ \varepsilon^2 \delta\kappa^2\\
\lesssim & \varepsilon \delta\kappa^3+\varepsilon \delta\kappa^3+ \varepsilon^2 \delta\kappa^2.
\end{aligned}
\end{equation}
Here Lemma A.1 of \cite{Chartier16} and the results $A_{1,\xi}$ and $C_{\xi}$ of EP2 are used to obtain the last two inequalities, respectively.
Finally, combining \eqref{for gerr} with \eqref{For s}, we obtain the global error over one period
 \begin{equation}\label{LEO}
\begin{aligned} \norm{(\Phi^{ \delta\kappa})^n(w^0)-w(\kappa_n)}_{H^{\alpha-6}}
\lesssim  \varepsilon \delta\kappa^3+ \varepsilon^2 \delta\kappa^2,\ \ n\delta\kappa=T_0.
\end{aligned}
\end{equation}

        \textbf{Refined global error.}

       For $n\delta\kappa\leq T/\varepsilon$, the global error of EP2 given in \eqref{FEWV}
can be derived  by considering \eqref{LEO} and by using the same way presented in Sect.
5 of \cite{Chartier16}.

The whole proof is complete.
   \hfill\end{proof}
\begin{rem}
It is noted that for EP1, the estimate of \eqref{For s} is
only  $\varepsilon \delta\kappa^2$. Therefore, EP1 does not have  optimal convergence.
\end{rem}

 \section{Long time conservations in actions, momentum and density}\label{sec:long time con}

In this section,  we turn back to  the methods   applied to the original system  \eqref{sch system} and
in order to make the analysis   be succinct, we
choose $\lambda=1$.
  For our
integrator \eqref{cs ei}, spectral semi-discretisation  (see
\cite{Chen2001,12,18,19}) with   the
  points
$x_k=\frac{\pi}{M}k,\ k\in \mathcal{M}$
 is used in space, where $\mathcal{M}=\{-M,\ldots,M-1\}^d$ and $2M$ presents the number of internal discretisation points in space. Then
the fully discrete scheme of  \eqref{cs ei} is
\begin{equation}
\begin{aligned}
u^{n+\tau}&=C_{\tau}(V)u^n+ h \int_{0}^{1}A_{\tau,\sigma}(V)f(u^{n+\sigma})d\sigma,\ \ 0\leq \tau\leq1,
\end{aligned}\label{cs ei-spe}%
\end{equation}
where $V=\ii h \Omega$, $\Omega=-\textmd{diag}((\omega_j)_{j\in
\mathcal{M}})$ and $f(u)=-\ii \mathcal{Q}(\abs{u}^2u)$
%%%%%%%%%%%%%%%%%%%%%%%%%%%%%%%%%%%%%%%%%%%%%%%%%%%%%%%%%%%%%%%%%%%%%%%%
\footnote{We still use the notation $f$ in this section without any  confusion.}
%%%%%%%%%%%%%%%%%%%%%%%%%%%%%%%%%%%%%%%%%%%%%%%%%%%%%%%%%%%%%%%%%%%%%%%%
. Here, $ \omega_j= \frac{1}{\varepsilon}\abs{j}^2=\frac{1}{\varepsilon} (j_1^2+\cdots+j_d^2)$ {for
$j=(j_1,\ldots,j_d)\in \mathcal{M}$} are the   eigenvalues  of the
linear part of \eqref{sch system} after  spectral
semi-discretisation in space, and the notation $\mathcal{Q}(v)$
denotes the trigonometric interpolation of a periodic function
$v=\sum\limits_{j\in \ZZ^d}v_je^{\ii(j\cdot x)}$ in the collocation
points, i.e., $\mathcal{Q}(v)=\sum\limits_{j\in
\mathcal{M}}\big(\sum\limits_{l\in
\ZZ^d}v_{j+2Ml}\big)\mathrm{e}^{\mathrm{i}(j\cdot x)}.$

The following notations  are needed in this section which have been
used in \cite{12,18,19}.
 For  a sequence
$k = (k_j)_{j\in \mathcal{M}}$ of integers $k_j$ and the
sequence
  $\omega = (\omega_j)_{j\in \mathcal{M}},$
denote $$ \norm{k}=\sum\limits_{j\in \mathcal{M}}|k_j|,  \
k\cdot \omega=\sum\limits_{j\in
\mathcal{M}}k_j\omega_j,  \ \omega^{\sigma
|k|}=\Pi_{j\in \mathcal{M}} \omega_j^{\sigma |k_j|} $$ for a
real $\sigma$.
 Denote  by  $\langle j\rangle$  the unit coordinate vector $(0, \ldots , 0, 1, 0,
\ldots,0)^{\intercal}$  with the only entry $1$ at the $|j|$-th
position.

 \subsection{Result of  near-conservation properties}
  %The result will state near-conservation properties over long times
%$0 \leq t \leq \tilde{\epsilon} ^{-N}$ of the numerical solutions  for the
%cubic Schr\"{o}dinger equation \eqref{sch system}, where $N\geq 1$
%is an arbitrary fixed integer.
%Moreover,  a non-resonance condition
%which is different from that in  \cite{18,19} on the frequencies is
%needed, which will be introduced as follows.

\begin{mytheo}\label{main theo} {(\textbf{Long time
near-conservations.})} Consider the small initial data \begin{equation}\label{initi cond}
\norm{u^0}_{H^s}\leq \tilde{\epsilon} \ll 1, \end{equation}
and define the set
\begin{equation}
\mathcal{R}_{\tilde{\epsilon},M,h}=\Big\{(j, k):j=j(k),\
k\neq\langle j\rangle,\ \abs{\sin\big(\frac{1}{2}h(\omega_j-k
\cdot \omega)\big)}\leq \frac{1}{2}\tilde{\epsilon}^{1/2}h,\
\norm{k}\leq2N+2\Big\},
\label{near-resonant R}%
\end{equation}
where $j(k):=\sum\limits_{l\in \mathcal{M}}k_ll
\ \textmd{mod}\  2M\in \mathcal{M}. %\label{jkk}%
$
 For the near-resonant indices $(j,k)$ in
$\mathcal{R}_{\tilde{\epsilon},M,h}$, they are required such that
\begin{equation}
\sup_{(j, k)\in\mathcal{R}_{\tilde{\epsilon},M,h}}
\frac{\abs{\omega_j}^{s-\frac{d+1}{2}}}{\omega^{(s-\frac{d+1}{2})
|k|}}\tilde{\epsilon}^{\norm{k}+1}\leq \tilde{C}
\tilde{\epsilon}^{2N+4}
 \label{non-resonance cond}%
\end{equation}
with a constant $\tilde{C}$ independent of $\tilde{\epsilon}$.
 For given $N\geq1$ and $s \geq
d + 1$, the numerical solution $u^n$
of   EP1  has the following conservations of actions, momentum and density, respectively
\begin{equation*}
\begin{aligned}
&\sum\limits_{j\in
\MM}|\omega_j|^{s}\frac{|I_j(u^{n},\bar{u}^{n})-I_j(u^{0},\bar{u}^{0})|}{\tilde{\epsilon}^2}
\leq C  \tilde{\epsilon}^{\frac{3}{2}},\\
 &\sum\limits_{r=1}^d\frac{|K_r[u^{n},\bar{u}^{n}]-K_r[u^{0},\bar{u}^{0}]|}{\tilde{\epsilon}^2}
\leq C  \tilde{\epsilon}^{\frac{3}{2}},\\ & \frac{|m[u^{n},\bar{u}^{n}]-m[u^{0},\bar{u}^{0}]|}{\tilde{\epsilon}^2} \leq
C  \tilde{\epsilon}^{\frac{3}{2}},
\end{aligned}
%\label{near-conser res}%
\end{equation*}
where  $0\leq t_n=nh\leq \tilde{\epsilon}^{-N}$ and the constant $C$ depends
on $\tilde{C}$, $\max_{j\in \mathcal{M}}\big \{
\frac{1}{\abs{\cos(\frac{1}{2}h \omega_j)}}\big\}$, the dimension
$d, N, s$ and the norm of the potential but is independent of
$n$, the size of the initial value $\tilde{\epsilon}$, the regime of the solution $\varepsilon$, and the discretisation
parameters $M$ and $h$. Here $K_r$ is referred to the $r$th
component of $K$.
% It is noted that in practical computations,  the integral appearing in
%\eqref{cs ei-spe}  usually cannot be solved exactly and a
%quadrature rule is needed.
 For the   schemes  EP1-EP2,
if the midpoint rule is used to the integral appearing in these methods, the above near conservations still hold.
\end{mytheo}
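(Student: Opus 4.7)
The plan is to follow the modulated Fourier expansion (MFE) methodology of Hairer and Lubich \cite{Hairer00} as adapted to Schr\"{o}dinger semi-discretisations in \cite{12,18,19}, but tailored to the continuous-stage exponential integrator \eqref{cs ei-spe}. First, I would seek a short-time ansatz for the numerical solution in the form
\begin{equation*}
u^n \;\approx\; \sum_{\norm{k}\leq 2N+2} e^{\ii (k\cdot\omega)\,t_n}\, z^{k}(\eps t_n),
\end{equation*}
where the modulation functions $z^k$ are smooth on the slow time scale $\eps t$, indexed by multi-indices $k=(k_j)_{j\in \MM}$ with $\norm{k}\leq 2N+2$. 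Substituting this ansatz into the one-step map of EP1 (and, for the second statement, into EP1 and EP2 with the inner integral replaced by the midpoint rule), expanding the coefficients $C_\tau(V)$ and $A_{\tau,\sigma}(V)$ in the operator-argument functions $\varphi_j(V)$, and Taylor-expanding the nonlinearity $f(u)=-\ii\mathcal{Q}(\abs{u}^2 u)$ about the modulated ansatz, I would match coefficients of each oscillatory exponential $e^{\ii(k\cdot\omega)t_n}$. This produces the modulation system: algebraic equations for the non-dominant $z^k$ with $k\neq \weig{j}$, and slow differential equations for the dominant modes $z^{\weig{j}}$.

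The second step is to solve this system and prove the analytic bounds that drive everything. The small denominators are exactly $1-e^{\ii h(\omega_j-k\cdot\omega)}$, equivalently $\sin\bigl(\tfrac{1}{2}h(\omega_j-k\cdot\omega)\bigr)$, so the near-resonance set \eqref{near-resonant R} and the hypothesis \eqref{non-resonance cond} are tailored precisely to keep the weighted norms of the $z^k$'s under control. Working in the weighted Sobolev algebra with norm $\sum_{k}\omega^{s\abs{k}}\norm{z^k}_{H^s}^2$ used in \cite{18,19}, I would solve the modulation system by iteration, obtaining coefficients of size $\tilde{\epsilon}^{\norm{k}}$, and show that the defect between the true numerical iterate $u^n$ and its modulated expansion is of size $\tilde{\epsilon}^{N+2}$ uniformly on an interval of length $\tilde{\epsilon}^{-1}$. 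The factor $1/\abs{\cos(\tfrac{1}{2}h\omega_j)}$ entering the final constant arises at this stage, since it controls the inversion of the linear part of the one-step update after using the identity $e^{\ii h\omega_j}-1 = 2\ii\sin(\tfrac{1}{2}h\omega_j)\,e^{\ii h\omega_j/2}$.

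The third step is to identify the almost-invariants. The NSE and its semi-discretisation are invariant under the gauge transformation $u\mapsto e^{\ii\theta}u$ and, in the periodic setting, under spatial translation; both symmetries lift to the MFE as invariance of the formal modulated Lagrangian under $z^k\mapsto e^{\ii\theta(1-\abs{k})}z^k$ and under the translation of the underlying spatial coordinates. A Noether-type argument on the modulation system then yields three families of formal conserved quantities $\mathcal{I}_j[\mathbf{z}]$, $\mathcal{K}_r[\mathbf{z}]$, $\mathcal{M}[\mathbf{z}]$, and by explicit computation each of them equals the corresponding physical quantity $I_j(u^n,\bar u^n)$, $K_r[u^n,\bar u^n]$, $m[u^n,\bar u^n]$ up to an error of order $\tilde{\epsilon}^{N+2}$. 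For EP1, as well as for EP1--EP2 with the midpoint quadrature, the integrator inherits these two symmetries because its coefficients are scalar operator functions of $V$ and the midpoint rule preserves the symmetry of the integral, so the almost-invariants survive the discretisation.

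The final step is the standard patching argument: on each interval of length $\tilde{\epsilon}^{-1}$ the almost-invariants drift by at most $\mathcal{O}(\tilde{\epsilon}^{N+2})$, and summing over $\mathcal{O}(\tilde{\epsilon}^{-N-1})$ such intervals yields the stated bound $C\tilde{\epsilon}^{3/2}\cdot\tilde{\epsilon}^{2}$ over $0\leq t_n\leq \tilde{\epsilon}^{-N}$. I expect the principal obstacle to be constructing the MFE and verifying its defect bound for the specific continuous-stage format \eqref{cs ei-spe}: unlike splitting or classical exponential Runge--Kutta schemes, here the integral stage couples all modulation modes through $A_{1,\sigma}(V)$, so the iterative solvability of the modulation system relies on a careful block-triangular structure together with the non-resonance hypothesis \eqref{non-resonance cond}. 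Once that bound and the symmetry-based almost-invariants are in hand, the reduction to the physical conservation laws and the bootstrap over $\tilde{\epsilon}^{-N}$ follow along the lines of \cite{18,19}.
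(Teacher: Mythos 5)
Your plan follows essentially the same route as the paper: modulated Fourier expansion with multi-indices $\norm{k}\leq 2N+2$, small denominators $\sin\bigl(\tfrac12 h(\omega_j-k\cdot\omega)\bigr)$ controlled by \eqref{near-resonant R}--\eqref{non-resonance cond}, iterative solution and defect bound on intervals of length $\tilde{\epsilon}^{-1}$, almost-invariants from the invariance of the modulated nonlinearity under $w^{k}\mapsto e^{\ii(k\cdot\mu)\theta}w^{k}$, and patching to $t_n\leq\tilde{\epsilon}^{-N}$. However, the step you explicitly defer (``constructing the MFE and verifying its defect bound for the specific continuous-stage format'') is precisely where the paper's work lies, and your proposal does not supply the devices that make it go through: (i) EP1 is first rewritten, using its symmetry and $\int_0^1 f((1-\sigma)u^n+\sigma u^{n-1})d\sigma=\int_0^1 f((1-\sigma)u^{n-1}+\sigma u^{n})d\sigma$, as a symmetric two-step recursion \eqref{MFE-2}, and the stage values $(1-\sigma)u^{n}+\sigma u^{n+1}$ are given their own $\sigma$-dependent modulation functions $w^{k}(\cdot,\sigma)=L_3^{k}(\sigma)z^{k}$, so that the quadrature in $\sigma$ is carried explicitly inside the modulated nonlinearity; (ii) because the scheme is implicit, both the a priori bound $\norm{u^n}_s\leq 2\tilde{\epsilon}$ and the comparison $\norm{u^n-\tilde{u}(t_n,\cdot)}_s\leq C\tilde{\epsilon}^{N+2}$ require Banach fixed-point arguments for the stage $U^{n}$ and its modulated counterpart, which you do not mention; (iii) EP2 is not handled by ``the midpoint rule preserves the symmetry'' alone --- the paper must set up a different operator $L^{k}$ for the one-point quadrature form \eqref{cs ei-spe-new} because the coefficient assumption of \cite{12} fails for EP2, and the same construction fails for EP3 (which is why EP3 is excluded). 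Finally, your closing bookkeeping is off: the interval $[0,\tilde{\epsilon}^{-N}]$ contains $\mathcal{O}(\tilde{\epsilon}^{-N+1})$ subintervals of length $\tilde{\epsilon}^{-1}$, and the exponent $\tfrac32$ in the theorem does not come from accumulating the drift of the almost-invariants (which contributes $\mathcal{O}(\tilde{\epsilon}^{2})$ after division by $\tilde{\epsilon}^{2}$) but from the $\mathcal{O}(\tilde{\epsilon}^{7/2})$ discrepancy between $\mathcal{J}_{\weig{j}}$ and the actual action $I_j(u^n,\bar u^n)$.
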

\begin{rem}
We remark that the method EP3 does not have such near conservations and the reason will be explained at the end of this section.
\end{rem}

\begin{rem}
It is noted that the authors in \cite{12,17-Gauckler,19}
analysed the long-time behaviour of exponential integrators,
splitting integrators and split-step Fourier method for
Schr\"{o}dinger equations. However, those methods cannot preserve
the  energy  \eqref{rea H} exactly. We remark that  Theorem \ref{main theo}
shows that our energy-preserving integrators also have a  near conservation of  actions,
momentum and density over long times.
\end{rem}

%\begin{rem}
%We remark that for the integrators using higher order quadrature formula, it is still an open problem to extend the proof of the present paper to this case. As pointed in \cite{12}, the technical difficulty comes from several nonlinear terms and the identification of invariants in the corresponding modulation system. This would require a more delicate analysis which is going to be our future work. However, by using a new approach different from \cite{12,18,19}, we can prove the result for the integrator EP1 with the integral. Moreover, from the analysis,
%it is known that the choice of quadrature formula will not affect the accuracy of long time conservations of EP1.
%
%\end{rem}

\subsection{The proof of Theorem \ref{main theo}} \label{sec:proof}
 The proof makes use of a
modulated Fourier expansion \cite{12,18,19,Wang2019} in time of the numerical solution.
We will use the following expansion
\begin{equation}
\begin{aligned} &\tilde{u}(t,x)=
\sum\limits_{\norm{k}\leq K}z^{k}(\tilde{\epsilon} t,x)
\mathrm{e}^{-\mathrm{i}(k \cdot \omega) t}=
\sum\limits_{\norm{k}\leq K}\sum\limits_{j\in
\MM}z_j^{k}(\tilde{\epsilon} t) \mathrm{e}^{\mathrm{i}(j \cdot x)}
\mathrm{e}^{-\mathrm{i}(k \cdot \omega) t}
\end{aligned}
\label{MFE-EAVF}%
\end{equation}
 to describe the numerical solution $u^n$  at time
$t_n = nh$  after $n$ time steps, where the  functions $z^{k}$ are
termed the modulation functions which evolve on a slow time-scale
$\tilde{\tau} = \tilde{\epsilon} t.$ Following \cite{12},  these functions can be
assumed to be single spatial waves:
$%\label{MFE-zk}%
 z^{k}(\tilde{\epsilon} t,x)=z_{j(k)}^{k}(\tilde{\epsilon} t) \mathrm{e}^{\mathrm{i}(j(k) \cdot
 x)},
$
i.e., their Fourier coefficients $z_j^{k}$ vanish for $j\neq
j(k)$ with $j(k)=\sum\limits_{l\in \mathcal{M}}k_ll
\ \textmd{mod}\  2M\in \mathcal{M}$.

 It is noted that as a standard approach to the study of the
long-time behavior of numerical methods,  modulated Fourier
expansion is also used in the analysis of
\cite{12,18,19,Wang2019}. However, in this paper, there are novel
modifications adapted to our integrators, which come from the
implicitness of the integrator and the integral appearing in the
integrator. {We present the main differences in the
proof.} For the similar derivations as those of \cite{12,18,19}, we
skip them in the analysis for brevity.

\subsubsection{Modulation equations}\label{subsec:mod equ}
{
\begin{prop} (\textbf{Modulation equations.})
Define
\begin{equation*}%\label{LLL}
\begin{aligned}L^{k}:&=(L^{k}_2)^{-1} L^{k}_1,\\
L_1^{k}:&=\mathrm{e}^{-\mathrm{i}(k \cdot \omega)
h}\mathrm{e}^{\tilde{\epsilon}
hD}-2\cos(h\Omega)+\mathrm{e}^{\mathrm{i}(k
\cdot \omega)
h}\mathrm{e}^{-\tilde{\epsilon} hD},\\
L_2^{k}:&=\varphi_1(\ii h
\Omega)\mathrm{e}^{-\frac{1}{2}\mathrm{i}(k
\cdot \omega) h}\mathrm{e}^{\frac{1}{2}\tilde{\epsilon}
hD}-\varphi_1(-\ii h
\Omega)\mathrm{e}^{\frac{1}{2}\mathrm{i}(k
\cdot \omega)
h}\mathrm{e}^{-\frac{1}{2}\tilde{\epsilon} hD},\\
%L_3^{k}(\sigma):&=
%(1-\sigma)\mathrm{e}^{\frac{1}{2}\mathrm{i}(k
%\cdot \omega) h}\mathrm{e}^{-
% \frac{h}{2}\tilde{\epsilon} D} +\sigma\mathrm{e}^{-\frac{1}{2}\mathrm{i}(k \cdot \omega)
%h}\mathrm{e}^{
% \frac{h}{2}\tilde{\epsilon} D},\\
 \end{aligned}
\end{equation*}
 where $D$ is the
differential operator (see \cite{hairer2006}).
 The modulation equations for the coefficients $
z_j^{k}$  appearing in \eqref{MFE-EAVF} are given by
\begin{equation}\label{ljkqp}
\begin{aligned}L^{k} z_j^{k}(\tilde{\epsilon} t)= -\ii h
\sum\limits_{k^1+k^2-k^3=k
}\int_{0}^{1}w^{k^1}_{j(k^1)}(\tilde{\epsilon}
t,\sigma)w^{k^2}_{j(k^2)}(\tilde{\epsilon}
t,\sigma)\overline{w^{k^3}_{j(k^3)}}(\tilde{\epsilon} t,\sigma)d\sigma,
\end{aligned}
\end{equation}
where
 \begin{equation}\label{eta modula sys}
w^{k}_{j(k)}(\tilde{\epsilon}
t,\sigma)=L^{k}_3(\sigma)z_{j(k)}^{k}(\tilde{\epsilon}
t)
\end{equation} with $$
L_3^{k}(\sigma):=
(1-\sigma)\mathrm{e}^{\frac{1}{2}\mathrm{i}(k
\cdot \omega) h}\mathrm{e}^{-
 \frac{h}{2}\tilde{\epsilon} D} +\sigma\mathrm{e}^{-\frac{1}{2}\mathrm{i}(k \cdot \omega)
h}\mathrm{e}^{
 \frac{h}{2}\tilde{\epsilon} D}.$$
%It is noted that the integral appearing  in \eqref{ljkqp} can be
%calculated exactly.
The initial condition for modulation equations
is given by \begin{equation}\label{initial pl}u_j^0= \sum\limits_{k} z_{j(k)}^{k}(0).\end{equation}
  \end{prop}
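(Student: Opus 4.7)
The plan is to substitute the single-wave modulated Fourier ansatz into the fully discrete EP1 scheme \eqref{cs ei-spe} and match the oscillatory phases $e^{-\ii(k\cdot\omega)t_n}$ mode-by-mode in the $j$-th spatial Fourier coefficient. First, I would specialize the scheme for EP1 (for which $A_{\tau,\sigma}(V)=\tau\varphi_1(V)$, so $A_{\tau,\sigma}$ does not depend on $\sigma$); a direct inspection shows that the internal stage satisfies the linear interpolation identity $u^{n+\sigma}=(1-\sigma)u^{n}+\sigma u^{n+1}$. This is exactly the structural fact that makes the operator $L_3^k(\sigma)$ in \eqref{eta modula sys} appear, once the interpolation is rewritten in the symmetric half-step form centred at $\tilde{\tau}=\tilde{\epsilon}(t_n+h/2)$ and the temporal shifts are encoded by $e^{\pm\tfrac{h}{2}\tilde{\epsilon} D}$ acting on the slow variable.

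Next I would substitute $\tilde{u}(t,x)=\sum_{\|k\|\leq K} z^{k}_{j(k)}(\tilde{\epsilon} t)\,e^{\ii(j(k)\cdot x)}e^{-\ii(k\cdot\omega)t}$ into the one-step recursion $u^{n+1}=e^{V}u^{n}+h\varphi_1(V)\int_{0}^{1}f(u^{n+\sigma})\,d\sigma$. On the linear side this produces $\bigl(e^{-\ii(k\cdot\omega)h}e^{\tilde{\epsilon} hD}-e^{V}\bigr)z^{k}$ times the oscillatory phase. For the cubic nonlinearity $f(u)=-\ii\mathcal{Q}(u^{2}\bar u)$, the convolution in Fourier space combined with the single-wave property $j(k)=\sum_\ell k_\ell\ell\bmod 2M$ restricts the triple product to the resonance set $k^{1}+k^{2}-k^{3}=k$, producing precisely the sum on the right-hand side of \eqref{ljkqp} with the half-step interpolants $w^{k}_{j(k)}(\tilde{\epsilon} t,\sigma)=L_3^k(\sigma) z^k_{j(k)}$. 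Matching coefficients of $e^{-\ii(k\cdot\omega)t_n}e^{\ii(j(k)\cdot x)}$ yields a forward modulation relation of the form $\bigl(e^{-\ii(k\cdot\omega)h}e^{\tilde{\epsilon} hD}-e^{V}\bigr)z^{k}_{j(k)}=-\ii h\,e^{-\frac{1}{2}\ii(k\cdot\omega)h}e^{\frac{1}{2}\tilde{\epsilon} hD}\varphi_1(V)\,\mathcal{N}^{k}$, where $\mathcal{N}^{k}$ denotes the nonlinear sum.

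The symmetric operator $L^{k}=(L_2^k)^{-1}L_1^k$ then emerges by combining this forward relation with the analogous backward relation obtained by applying the scheme on the interval $[t_{n-1},t_n]$, i.e.\ shifting $t_n\mapsto t_{n-1}$ and left-multiplying by $e^{-V}$ (using $e^{-V}\varphi_1(V)=\varphi_1(-V)$ in each Fourier mode). Subtracting the two relations on the linear side reassembles $e^{-\ii(k\cdot\omega)h}e^{\tilde{\epsilon} hD}-2\cos(h\Omega)+e^{\ii(k\cdot\omega)h}e^{-\tilde{\epsilon} hD}=L_1^k$, while the right-hand sides factor the common operator $L_2^k$ acting on $\mathcal{N}^{k}$ (since the cubic sum is the same on both sides, up to the $e^{\pm\frac{1}{2}\tilde{\epsilon} hD}$ shifts that $L_3^k$ has already absorbed). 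Inverting $L_2^k$, which is invertible in each Fourier mode under the non-resonance/CFL condition $\cos(\tfrac{1}{2}h\omega_j)\neq 0$ that the main theorem assumes, yields \eqref{ljkqp}. The initial condition \eqref{initial pl} follows by setting $t=0$ in the ansatz and reading off the $j$-th Fourier mode of $u^0$.

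\textbf{Main obstacle.} The principal difficulty is not the linear part but the careful phase bookkeeping on the nonlinear side: one must verify that the triple product $u^{n+\sigma}\cdot u^{n+\sigma}\cdot\bar u^{n+\sigma}$, after inserting the linear interpolation, the MFE ansatz, and the trigonometric projection $\mathcal{Q}$, regroups exactly into $\int_0^1 w^{k^1}w^{k^2}\overline{w^{k^3}}\,d\sigma$ with the correct half-step weights encoded by $L_3^k(\sigma)$. Once this identification is in place, the derivation of $L_1^k$ and $L_2^k$ is algebraic, and the argument transfers with the same logic to the EP2/midpoint variants (with $A_{1,\sigma}$ concentrated at $\sigma=1/2$), whereas EP3 fails the necessary symmetry, which is the reason anticipated in the subsequent remark.
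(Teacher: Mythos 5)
Your proposal is correct and follows essentially the same route as the paper: both reduce EP1 to the symmetric two-step form $u^{n+1}-2\cos(h\Omega)u^n+u^{n-1}=h\bigl[\varphi_1(V)\int_0^1 f((1-\sigma)u^n+\sigma u^{n+1})d\sigma-\varphi_1(-V)\int_0^1 f((1-\sigma)u^{n-1}+\sigma u^n)d\sigma\bigr]$ (you via combining the forward step with $e^{-V}$ times the backward step, the paper by invoking the symmetry of EP1), then insert the half-step-centred interpolant $L_3^k(\sigma)z^k$ and match the phases $e^{-\ii(k\cdot\omega)t}$ in each spatial mode to read off $L_1^k$, $L_2^k$ and the convolution constraint $k^1+k^2-k^3=k$. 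The interpolation identity $u^{n+\sigma}=(1-\sigma)u^n+\sigma u^{n+1}$ and the factorization of $L_2^k$ on the nonlinear side are exactly the ingredients the paper uses.
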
}
\begin{proof}
%We first refine the scheme of the fully  discrete integrator as
%\begin{equation*}
%\begin{aligned}
%u^{n+\tau}&=C_{\tau}(V)u^n+ h \int_{0}^{1}A_{\tau,\sigma}(V)f(u^{n+\sigma})d\sigma,\ \ 0\leq \tau\leq1.
%\end{aligned}%\label{cs ei-spe}%
%\end{equation*}
In order to derive the modulation equations for EP1, a new approach
different from \cite{12,18,19} is considered here. To this end, we
define the   operator $L^{k}$ and it can
be expressed in   Taylor expansions as follows:
\begin{equation}\label{L expansion}
\begin{aligned}L^{\langle j\rangle}_j=&  \frac{1}{2}
\tilde{\epsilon} h^2\omega_j\csc\big(\frac{1}{2}h\omega_j\big)  D+
\frac{1}{48}
\tilde{\epsilon}^3 h^4\omega_j\csc\big(\frac{1}{2}h\omega_j\big)  D^3+\cdots,\\
 L^{k} = & \ii h\Omega\csc\big(\frac{1}{2}h\Omega\big)\sin\big(\frac{1}{2}h(-\Omega-(k \cdot\omega)I)\big) \\
 &+ \frac{1}{2}\tilde{\epsilon} h^2  \Omega\csc\big(\frac{1}{2}h\Omega\big)\cos\big(\frac{1}{2}h((k
 \cdot\omega)I+\Omega)\big)D
% &+\ii \frac{1}{8}\tilde{\epsilon}^2 h^3  \Omega\csc\big(\frac{1}{2}h\Omega\big)\sin\big(\frac{1}{2}h((k \cdot\ww)I-\Omega)\big)D^2
 +\cdots.
\end{aligned}
\end{equation}
 Moreover, for the operator $L_3^{k}(\sigma)$, we have $$
L_3^{k}(\frac{1}{2})=\cos\big(\frac{h(k \cdot
 \omega)}{2}\big)+\frac{1}{2}\sin\big(\frac{h(k
\cdot  \omega)}{2}\big)(\textmd{i} h\tilde{\epsilon} D)+\cdots.$$

By using the symmetry of the EP1
 integrator and $$\displaystyle\int_{0}^{1}f((1-\sigma)u^n+\sigma
u^{n-1})d\sigma=\displaystyle\int_{0}^{1}f((1-\sigma)u^{n-1}+\sigma
u^{n})d\sigma,$$  we can rewrite the scheme of EP1 as
%%%%%%%%%%%%%%%%%%%%%%%%%%%%%%%%%%%%%%%%%%%%%%%%%%%%%%%%%%%%%%%%%%%%%%%%
\footnote{This form
 has been given in \cite{Li_Wu(sci2016)} for first-order ODEs.}
%%%%%%%%%%%%%%%%%%%%%%%%%%%%%%%%%%%%%%%%%%%%%%%%%%%%%%%%%%%%%%%%%%%%%%%%
\begin{equation}
\begin{aligned}&u^{n+1}-2\cos(h\Omega)u^{n}+u^{n-1}\\
=&h\Big[\varphi_1(V)\displaystyle\int_{0}^{1}f((1-\sigma)u^{n}+\sigma
u^{n+1})d\sigma-\varphi_1(-V)
\displaystyle\int_{0}^{1}f((1-\sigma)u^{n-1}+\sigma
u^{n})d\sigma\Big].
\end{aligned}\label{MFE-2}%
\end{equation}
For the   term $(1-\sigma)u^{n}+\sigma u^{n+1}$,  we look for a
modulated Fourier expansion of the form
\begin{equation*}
\begin{aligned}\tilde{u}_h(t+\frac{h}{2},x,\sigma)=
\sum\limits_{\norm{k}\leq
K}w_{j(k)}^{k}\Big(\tilde{\epsilon}(t+\frac{h}{2}),\sigma\Big)\mathrm{e}^{\mathrm{i}(j(k)
\cdot x)} \mathrm{e}^{-\mathrm{i}(k
\cdot \omega) (t+\frac{h}{2})},
\end{aligned}
\end{equation*}
which leads to
\begin{equation}\label{xip}
\begin{aligned} w_{j(k)}^{k}\Big(\tilde{\epsilon}(t+\frac{h}{2}),\sigma\Big)
%=&\Big((1-\sigma)\mathrm{e}^{\frac{1}{2}\mathrm{i}(k \cdot
% \omega) h}\mathrm{e}^{-
% \frac{h}{2}\tilde{\epsilon} D} +\sigma\mathrm{e}^{-\frac{1}{2}\mathrm{i}(k \cdot \omega)
%h}\mathrm{e}^{
% \frac{h}{2}\tilde{\epsilon} D}\Big)z_{j(k)}^{k}\Big(\tilde{\epsilon}(t+\frac{h}{2})\Big)\\
 =&L^{k}_3(\sigma)z_{j(k)}^{k}\Big(\tilde{\epsilon}(t+\frac{h}{2})\Big).
\end{aligned}
\end{equation}
 Likwise, for $(1-\sigma)u^{n-1}+\sigma u^{n}$, we have the following
modulated Fourier expansion
\begin{equation*}
\begin{aligned}\tilde{u}_h(t-\frac{h}{2},x,\sigma)=
\sum\limits_{\norm{k}\leq
K}w_{j(k)}^{k}\Big(\tilde{\epsilon}(t-\frac{h}{2}),\sigma\Big)\mathrm{e}^{\mathrm{i}(j(k)
\cdot x)} \mathrm{e}^{-\mathrm{i}(k
\cdot \omega)(t-\frac{h}{2})}.
\end{aligned}
\end{equation*}
%with
%\begin{equation}\label{xim}
%\begin{aligned} w_{j(k)}^{k}\Big(\tilde{\epsilon}(t-\frac{h}{2}),\sigma\Big)=L^{k}_3(\sigma)z_{j(k)}^{k}\big(\tilde{\epsilon}(t-\frac{h}{2})\big).
%\end{aligned}
%\end{equation}
Inserting  \eqref{MFE-EAVF} and \eqref{xip} into
\eqref{MFE-2} yields
\begin{equation*}
\begin{aligned}&\tilde{u}(t+h,x)-2\cos(h\Omega)\tilde{u}(t,x)+\tilde{u}(t-h,x)\\
=&h\Big[\varphi_1(V)\displaystyle\int_{0}^{1}f\big(\tilde{u}_{h}(t+\frac{h}{2},x,\sigma)\big)d\sigma-\varphi_1(-V)
\displaystyle\int_{0}^{1}f\big(\tilde{u}_{h}(t-\frac{h}{2},x,\sigma)\big)d\sigma\Big],
\end{aligned}
\end{equation*}
which can be expressed  by operators as \begin{equation}\label{new
q}
\begin{aligned}&(\varphi_1(\ii h \Omega)\mathrm{e}^{\frac{1}{2} hD}-\varphi_1(-\ii h
\Omega)\mathrm{e}^{-\frac{1}{2} hD})^{-1}( \mathrm{e}^{
hD}-2\cos(h\Omega)+ \mathrm{e}^{-
hD})\tilde{u}(t,x)=h\displaystyle\int_{0}^{1}f(\tilde{u}_{h}(t,x,\sigma))d\sigma.\end{aligned}\end{equation}
On the other hand, we rewrite the nonlinearity $f$ as:
\begin{equation*}
\begin{aligned}&f(u)%=\mathcal{Q}(\tilde{u}_{h})\mathcal{Q}(\tilde{u}_{h})\mathcal{Q}(\overline{\tilde{u}_{h}})\\
%=& \Big(\sum\limits_{\norm{k^1}\leq K}\sum\limits_{j(k^1)
%\in\MM}\sum\limits_{l_1\in\ZZ^d:l_1\equiv j(k^1) \ \textmd{mod}\
%2M}w^{k^1}_{l_1}
%e^{\mathrm{i}(j(k^1)\cdot x)}\mathrm{e}^{-\mathrm{i}(k^1 \cdot \omega) t}\Big)\\
%& \Big(\sum\limits_{\norm{k^2}\leq K}\sum\limits_{j(k^2)
%\in\MM}\sum\limits_{l_2\in\ZZ^d:l_2\equiv j(k^2) \ \textmd{mod}\
%2M}w^{k^2}_{l_2}
%e^{\mathrm{i}(j(k^2)\cdot x)}\mathrm{e}^{-\mathrm{i}(k^2 \cdot \omega) t}\Big)\\
%& \Big(\sum\limits_{\norm{k^3}\leq K}\sum\limits_{j(k^3)
%\in\MM}\sum\limits_{l_3\in\ZZ^d:l_3\equiv j(k^3) \ \textmd{mod}\
%2M}\overline{w^{k^3}_{l_3}
%e^{\mathrm{i}(j(k^3)\cdot x)}\mathrm{e}^{-\mathrm{i}(k^3 \cdot \omega) t}}\Big)\\
=-\ii\sum\limits_{\norm{k}\leq K}\sum\limits_{j(k)
\in\MM}\sum\limits_{k^1+k^2-k^3=k
}w^{k^1}_{l_1}w^{k^2}_{l_2}\overline{w^{k^3}_{l_3}}
e^{\mathrm{i}(j(k)\cdot x)}\mathrm{e}^{-\mathrm{i}(k
\cdot \omega) t},
\end{aligned}
\end{equation*}
where  $j(k)=(j(k^1)+j(k^2)-j(k^3))\ \textmd{mod}\ 2M$ if
$k=k^1+k^2-k^3$. On the basis of this fact and \eqref{new
q}, considering the $j$th Fourier coefficient and comparing the
coefficients of
$\mathrm{e}^{-\mathrm{i}(k\cdot \omega) t}$,
{the result of this proposition  is obtained.} \hfill
\hfill\end{proof}
%\subsubsection{Some skipped points of the proof}
%Then  the following key points
%can be considered one by one.
%  \begin{itemize}
% \item \emph{An iterative construction of the
%functions} is  achieved  by considering  reverse Picard iteration.
%
%\item  A more convenient \emph{rescaling} is presented.
%
%
%\item The nonlinear terms are   estimated  and \emph{the size of the iterated modulation
%functions} is shown.
%
%\item \emph{The bound of the defect} $d^k$ is  derived.
%
%\item  \emph{The size of the numerical
%solution}  and  \emph{the
%difference of the numerical solution and its modulated Fourier
%expansion} are derived.
%\end{itemize}
%
%We do not present the details for these points since they can be derived by similar arguments as in   \cite{12,18,19}.
\subsubsection{Iterative solution of  modulation system}\label{subsec:rev pic}
In order to achieve an approximate solution of the modulation system
\eqref{ljkqp}--\eqref{initial pl},  we introduce an iterative
procedure in this subsection which was used in \cite{12,19}.

For $j=j(k)$ with $k\neq\langle j\rangle$, the modulation system
  takes the form
\begin{equation}\label{Pic ite z-0}
\begin{aligned}&\ii h\omega_j\csc\big(\frac{1}{2}h\omega_j\big)\sin\big(\frac{1}{2}h(\omega_j-k
\cdot\omega)\big)z_{j(k)}^{k}(\tilde{\epsilon} t) =\NN(w(\tilde{\epsilon}
t))_{j(k)}^{k}+\BB(z(\tilde{\epsilon} t))_{j(k)}^{k},
\end{aligned}
\end{equation}
and for $j=j(\langle j\rangle)$, the modulation system becomes
\begin{equation}\label{Pic ite z2}
\begin{aligned}
&\frac{1}{2} \tilde{\epsilon}
h^2\omega_j\csc\big(\frac{1}{2}h\omega_j\big)\dot{z}_{j}^{\langle
j\rangle}(\tilde{\epsilon} t) =\NN(w(\tilde{\epsilon} t))_{j}^{\langle
j\rangle}+\AA(z(\tilde{\epsilon} t))_{j}^{\langle j\rangle},
\end{aligned}
\end{equation}
  where $\dot{z}_{j}^{\langle
j\rangle}$ stands for the derivative with respect to  $\tilde{\tau} =
\tilde{\epsilon} t$ and we  have used  the differential operators
\begin{equation*}
\begin{aligned}&\BB(z(\tilde{\epsilon} t))_{j(k)}^{k}=- \frac{1}{2}\tilde{\epsilon} h^2
\omega_j\csc\big(\frac{1}{2}h\omega_j\big)\cos\big(\frac{1}{2}h(k
\cdot\omega-\omega_j)\big)\dot{z}(\tilde{\epsilon}
t)_{j(k)}^{k}-\ldots,\\
&\AA(z(\tilde{\epsilon} t))_{j}^{\langle j\rangle}=-\frac{1}{48} \tilde{\epsilon}^3
h^4\omega_j\csc\big(\frac{1}{2}h\omega_j\big)z^{(3)}(\tilde{\epsilon}
t)_{j}^{\langle j\rangle}-\ldots,
\end{aligned}
\end{equation*}
and
$$\NN(w(\tilde{\epsilon}
t))_{j(k)}^{k}= -\ii h \sum\limits_{k^1+k^2-k^3=k
}\int_{0}^{1}w^{k^1}_{j(k^1)}(\tilde{\epsilon}
t,\sigma)w^{k^2}_{j(k^2)}(\tilde{\epsilon}
t,\sigma)\overline{w^{k^3}_{j(k^3)}}(\tilde{\epsilon}
t,\sigma)d\sigma.$$

Denote  by $[\cdot]^{l}$ the $l$th iterate and  we choose the
starting iterates ($l = 0$)   as $[z_j^{k}(\tilde{\tau})]^0=0$ for
$k\neq  \langle j\rangle$, and $[z_j^{\langle
j\rangle}(\tilde{\tau})]^0=u_j^0$. Then the modulation functions are
distinguished as follows.

\begin{mydef} \label{def:Iterative sol}{(\textbf{Iterative solution of  modulation system.})}
\begin{itemize}
\item For near-resonant indices $(j,
k)\in\mathcal{R}_{\tilde{\epsilon},M,h}$ or $\norm{k}>K=2N+2$, it
is set  for $0\leq \tilde{\epsilon} t=\tilde{\tau}\leq1$ that
$[z^{k}_j(\tilde{\tau})]^{l+1}=0.$

 \item For near-resonant indices $(j,
k)=(j,\langle j\rangle)$, in the light of \eqref{Pic ite
z2},  $\big[z_{j}^{\langle j\rangle}\big]^{l+1}$ is defined as the
solution of the differential equation
\begin{equation*}
\big[\dot{z}_{j}^{\langle j\rangle}(\tilde{\epsilon} t)\big]^{l+1}
=\Big[\frac{\sinc\big(\frac{1}{2}h\omega_j\big)}{h \tilde{\epsilon}}
\NN(w(\tilde{\epsilon} t))_{j}^{\langle
j\rangle}+\frac{\sinc\big(\frac{1}{2}h\omega_j\big)}{h
\tilde{\epsilon}}\AA(z(\tilde{\epsilon} t))_{j}^{\langle j\rangle}\Big]^{l}
\end{equation*}
with the initial value $\big[z_{j}^{\langle
j\rangle}(0)\big]^{l+1}=u_j^0-\Big[\sum\limits_{k\neq
\langle j\rangle} z_{j}^{k}(0)\Big]^{l}$ and $\sinc
(x)=\sin (x)/x.$

\item
 For the
remaining  indices $(j, k)$ in the set
\begin{equation}
\mathcal{L}_{\tilde{\epsilon},M,h}=\{(j, k):j=j(k),\
k\neq\langle j\rangle,\  (j,
k)\notin\mathcal{R}_{\tilde{\epsilon},M,h},\ \norm{k}\leq
K\},
 \label{near-resonant l}%
\end{equation}
it follows from   \eqref{Pic ite z-0} that
\begin{equation}\label{Pic ite z}
\begin{aligned}&\big[z_{j(k)}^{k}(\tilde{\epsilon} t)\big]^{l+1} =\Big[\frac{\sinc\big(\frac{1}{2}h\omega_j\big)}{2\ii
\sin\big(\frac{1}{2}h(\omega_j-k
\cdot\omega)\big)}\big(\NN(w(\tilde{\epsilon}
t))_{j(k)}^{k}+\BB(z(\tilde{\epsilon}
t))_{j(k)}^{k}\big)\Big]^{l}.
\end{aligned}
\end{equation}

\end{itemize}

\end{mydef}

It is noted that by this iterative construction, the iterated
modulation functions $[z_{j(k)}^{k}(\tilde{\epsilon} t)]^{l}$ are
polynomials in $\tilde{\epsilon} t$ of degree bounded in terms of the number
of iterations $l$.

\subsubsection{Rescaling}\label{subsec:res est}
Following \cite{12,19}, this subsection rescales and splits the
modulation functions in order to make  good  use of the powers of
$\tilde{\epsilon}$. By letting
$$ [[k]]=\left\{\begin{aligned} &
\max(2,(\norm{k}+1)/2),\quad k\neq \langle
j\rangle,\\
&(\norm{k}+1)/2=1,\qquad \ \ \  k=\langle
j\rangle,
\end{aligned}\right.
$$
we split the functions $z_j^{k}$ into two parts
$z_j^{k}=\tilde{\epsilon}^{[[k]]}a_j^{k}+\tilde{\epsilon}^{[[k]]}b_j^{k},$
where $a_j^{k}$ denotes the ``diagonal" entries (i.e.,
$a_j^{k}\neq 0$ only for $k =\langle j\rangle$) and
$b_j^{k}$ presents the ``off-diagonal" entries (i.e.,
$b_j^{k}\neq 0$ only for $k \neq\langle j\rangle$). We
use the following notations
\begin{equation} \label{21a1}\aa=(a^{k})_{k}=(a^{k}_{j(k)}e^{\mathrm{i}(j(k)\cdot x)})_{k},\ \ \
\bb=(b^{k})_{k}=(b^{k}_{j(k)}e^{\mathrm{i}(j(k)\cdot
x)})_{k}\end{equation} and define the operator
\begin{equation} \label{21a2}(\WW\cc)_j^{k}=\left\{\begin{aligned} &
\frac{2\ii \sin\big(\frac{1}{2}h(\omega_j-k
\cdot\omega)\big)}{\sinc\big(\frac{1}{2}h\omega_j\big)}c_j^{k},\quad
(j,
k)\in\mathcal{L}_{\tilde{\epsilon},M,h},\\
& \tilde{\epsilon}^{\frac{1}{2}}hc_j^{k},\qquad \qquad \qquad\qquad\qquad
\textmd{else}.
\end{aligned}\right.
\end{equation}
Furthermore, we rescale the non-linearity $\NN(\www)$  by
\begin{equation} \label{21a3}\FF(\vv)_j^{k}=\tilde{\epsilon}^{-\max([[k]],2)}\NN(\www),\end{equation}
where $\vv=(v^{k})_{k}$ is defined by $
v^{k}=\tilde{\epsilon}^{-[[k]]}w^{k}=\tilde{\epsilon}^{-[[k]]}w^{k}_{j(k)}e^{\mathrm{i}(j(k)\cdot
x)}.$

We are now in a position to rewrite the iteration from the previous
subsection in these rescaled variables.
\begin{prop}  (\textbf{Rescaling.})
Using the above rescaled variables, the iteration given by
Definition \ref{def:Iterative sol} can be formulated as
\begin{equation} \label{variables iteration}
\begin{aligned}&\big[b_{j}^{k}\big]^{l+1}
=\big[(\WW^{-1}\BB(\bb))_{j}^{k}\big]^{l}+\big[(\WW^{-1}\FF(\vv))_{j}^{k}\big]^{l},\quad
(j, k)\in\mathcal{L}_{\tilde{\epsilon},M,h},\\
&\big[\dot{a}_{j}^{\langle j\rangle}\big]^{l+1}
=\frac{\sinc(\frac{1}{2}h\omega_j)}{h\tilde{\epsilon}}\big[(\AA(\aa))_{j}^{\langle
j\rangle}\big]^{l}+\frac{\sinc(\frac{1}{2}h\omega_j)}{h }\big[(
\FF(\vv))_{j}^{\langle j\rangle}\big]^{l},\\
&\big[a_{j}^{\langle j\rangle}(0)\big]^{l+1}
=\tilde{\epsilon}^{-1}u_j^0-\Big[\sum\limits_{k\neq \langle
j\rangle}\tilde{\epsilon}^{[[k]]-1}b_{j}^{k}(0)\Big]^{l},
\end{aligned}
\end{equation}
where $[v_{j}^{k}]^{l} =
\tilde{\epsilon}^{-[[k]]}[w_{j}^{k}]^{l}$ defined by \eqref{eta
modula sys}.

Another rescaling of the variables will be used in this section
$$\hat{a}_{j}^{k}=\abs{\omega^{\frac{2s-d-1}{4}\abs{k}}}a_{j}^{k},\ \
\hat{b}_{j}^{k}=\abs{\omega^{\frac{2s-d-1}{4}\abs{k}}}b_{j}^{k},\
\
\hat{v}_{j}^{k}=\abs{\omega^{\frac{2s-d-1}{4}\abs{k}}}v_{j}^{k}.$$
For these rescaled variables,  the iteration for $\hat{\bb}$ becomes
$$\big[\hat{b}_{j}^{k}\big]^{l+1}
=\big[(\WW^{-1}\BB(\hat{\bb}))_{j}^{k}\big]^{l}+\big[(\WW^{-1}\hat{\FF}(\hat{\vv}))_{j}^{k}\big]^{l},\quad
(j, k)\in\mathcal{L}_{\tilde{\epsilon},M,h},$$ where
$\hat{\FF}(\hat{\vv})_{j}^{k}=\abs{\omega^{\frac{2s-d-1}{4}\abs{k}}}\FF(\vv)_{j}^{k}$.
\end{prop}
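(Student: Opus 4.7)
The proof is a bookkeeping exercise that substitutes the splitting $z_j^{k}=\tilde{\epsilon}^{[[k]]}a_j^{k}+\tilde{\epsilon}^{[[k]]}b_j^{k}$ and the rescaled nonlinearity $\FF$ into the three cases of the iteration in Definition \ref{def:Iterative sol}, and then tracks how the powers of $\tilde{\epsilon}$ redistribute. Because $a_j^{k}$ is supported only on $k=\langle j\rangle$ while $b_j^{k}$ is supported only on $k\neq\langle j\rangle$, the three branches of Definition \ref{def:Iterative sol} correspond respectively to the update for $\bb$ on $\mathcal{L}_{\tilde{\epsilon},M,h}$, the ODE for $\aa$, and an identically vanishing update, so the three lines of \eqref{variables iteration} exhaust all cases.

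For the off-diagonal indices $(j,k)\in\mathcal{L}_{\tilde{\epsilon},M,h}$ I would start from \eqref{Pic ite z}, substitute $z_j^{k}=\tilde{\epsilon}^{[[k]]}b_j^{k}$, and use the fact that $\BB$ acts as a linear differential operator that is diagonal in $k$ to pull the scalar $\tilde{\epsilon}^{[[k]]}$ through $\BB$. For the cubic term I would invoke the definition $\NN(\www)_j^{k}=\tilde{\epsilon}^{\max([[k]],2)}\FF(\vv)_j^{k}$; since $[[k]]\geq 2$ for $k\neq\langle j\rangle$, the exponent $\max([[k]],2)-[[k]]$ equals zero, so dividing by the overall $\tilde{\epsilon}^{[[k]]}$ leaves $\FF(\vv)$ unscaled. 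The remaining prefactor $\sinc(\frac{1}{2}h\omega_j)/\big(2\ii\sin(\frac{1}{2}h(\omega_j-k\cdot\omega))\big)$ is, by \eqref{21a2}, exactly the action of $\WW^{-1}$ on $\mathcal{L}_{\tilde{\epsilon},M,h}$, which yields the first line of \eqref{variables iteration}. For indices in $\mathcal{R}_{\tilde{\epsilon},M,h}$ and for $\norm{k}>K$ both sides vanish, consistent with the ``else'' branch of \eqref{21a2}.

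For the diagonal update I would insert $z_j^{\langle j\rangle}=\tilde{\epsilon}\,a_j^{\langle j\rangle}$ into \eqref{Pic ite z2}; linearity of $\AA$ extracts a factor $\tilde{\epsilon}$ from $\AA(z)$, while $\NN(\www)_j^{\langle j\rangle}=\tilde{\epsilon}^{2}\FF(\vv)_j^{\langle j\rangle}$ because $\max([[\langle j\rangle]],2)=2$. After cancelling the overall $\tilde{\epsilon}$ coming from $\dot z=\tilde{\epsilon}\dot a$, the two residual $\tilde{\epsilon}$-powers on the right-hand side become $1/(h\tilde{\epsilon})$ and $1/h$ respectively, explaining precisely the two different prefactors in the second line of \eqref{variables iteration}. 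The initial condition then follows by splitting \eqref{initial pl} as $u_j^0=z_j^{\langle j\rangle}(0)+\sum_{k\neq\langle j\rangle}z_j^{k}(0)=\tilde{\epsilon}\,a_j^{\langle j\rangle}(0)+\sum_{k\neq\langle j\rangle}\tilde{\epsilon}^{[[k]]}b_j^{k}(0)$ and solving for $a_j^{\langle j\rangle}(0)$.

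Finally, the second rescaling by the weight $\abs{\omega^{\frac{2s-d-1}{4}\abs{k}}}$ is a $k$-dependent but time- and iteration-independent multiplier. Since $\WW^{-1}$ and $\BB$ are diagonal in the index $k$, the weight commutes with both operators and may be absorbed without any further $\tilde{\epsilon}$-bookkeeping. The cubic nonlinearity $\FF$ couples $k^1,k^2,k^3$ to $k$ via the convolution constraint $k=k^1+k^2-k^3$, so the weight does not factor through index by index; it is instead absorbed once and for all into the new symbol $\hat{\FF}(\hat{\vv})_j^{k}=\abs{\omega^{\frac{2s-d-1}{4}\abs{k}}}\FF(\vv)_j^{k}$, and with this convention the stated equation for $\hat{\bb}$ follows immediately. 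The only delicate point in the whole argument is the matching of the exponent $\max([[k]],2)$ used to rescale $\NN$ with the exponent $[[k]]$ used to rescale $\www$; this match, engineered by the $\max$ in \eqref{21a3}, is exactly what makes the rescaled iteration \eqref{variables iteration} free of explicit powers of $\tilde{\epsilon}$ beyond those displayed.
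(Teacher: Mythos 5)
Your proposal is correct: the paper states this proposition without any proof, treating it as immediate from the definitions, and your substitution-and-bookkeeping argument is exactly the verification being left implicit. You correctly identify the two points that make it work — the exponent match $\max([[k]],2)=[[k]]$ for $k\neq\langle j\rangle$ versus $\max(1,2)=2$ on the diagonal, and the fact that $\WW^{-1}$ and $\BB$ are diagonal in $k$ so the weight $\abs{\omega^{\frac{2s-d-1}{4}\abs{k}}}$ passes through while the convolution term is simply absorbed into $\hat{\FF}$ by definition.
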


\subsubsection{Size of the iterated modulation functions}\label{subsec:Size}
In this subsection, we will control the size of the iterated
modulation functions.  The    norm $|||\mathbf{z}|||_s^2=\sum\limits_{j}\abs{\omega_j}^s
\big(\sum\limits_{k}\abs{z_j^{k}}\big)^2$ (see, e.g. \cite{12})
will be used in the rest of this paper.

Before presenting the size of the iterated modulation functions, we
first need to estimate the bounds of the operator $\WW$ \eqref{21a2} and the
non-linearity $\FF$ \eqref{21a3}.

\begin{prop}\label{pro: lem1} {(\textbf{Bounds of the operator $\WW$ and the
non-linearity $\FF$.})} The following bounds hold
\begin{equation}\label{bounds lem1}
\begin{aligned}
&|||\WW^{-1}\vv|||_s\leq \tilde{\epsilon}^{-\frac{1}{2}}h^{-1}|||\vv|||_s,\
\
 |||\FF(\vv)|||_s\leq C \tilde{\epsilon} h|||\check{\vv}|||_s^3,\\
&|||\FF(\mathbf{v_1})-\FF(\mathbf{v_2})|||_s\leq C \tilde{\epsilon}
h|||\check{\mathbf{v}}_1-\check{\mathbf{v}}_2|||_s
\max(|||\check{\mathbf{v}}_1|||_s,|||\check{\mathbf{v}}_2|||_s)^2,
\end{aligned}
\end{equation}
where  $\check{\mathbf{v}}=\sup_{0\leq
\sigma\leq1}\{\vv(\tilde{\tau},\sigma)\}$  and the constant $C$  is
independent of $\tilde{\epsilon}$ but depends on $d, s,$ and $V$. The same
estimates are ture for $\hat{\vv}, \hat{\FF}$ and
$|||\cdot|||_{\frac{d+1}{2}}$ instead of $\vv, \FF$ and
$|||\cdot|||_{s}$, respectively.
\end{prop}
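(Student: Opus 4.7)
The plan is to establish the three estimates in order, exploiting the piecewise definition of $\WW$ in \eqref{21a2}, the non-resonance structure of $\mathcal{L}_{\tilde{\epsilon},M,h}$, and the convolution/algebra property of the norm $\normmm{\cdot}_s$ for $s>d/2$.

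For the bound on $\WW^{-1}$, I would split the indices $(j,k)$ according to \eqref{21a2}. On the ``near-resonant or diagonal'' part $\WW$ acts as multiplication by $\tilde{\epsilon}^{1/2}h$, so $\WW^{-1}$ divides by that same factor and the desired bound is immediate. On the complement $\mathcal{L}_{\tilde{\epsilon},M,h}$ the negation of the defining inequality of $\mathcal{R}_{\tilde{\epsilon},M,h}$ in \eqref{near-resonant R} yields $\abs{\sin(h(\omega_j-k\cdot\omega)/2)} > \tilde{\epsilon}^{1/2}h/2$, which together with a uniform lower bound on $\abs{\sinc(h\omega_j/2)}$ (provided by the hypothesis on $\max_j\abs{\cos(h\omega_j/2)}^{-1}$ in Theorem \ref{main theo}) bounds the multiplier in $\WW^{-1}$ by a constant times $\tilde{\epsilon}^{-1/2}h^{-1}$. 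Squaring, summing against the weights $\abs{\omega_j}^s$, and unpacking the definition of $\normmm{\cdot}_s$ gives the first estimate.

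For the bound on $\FF$, recall that $\NN(w)^k_{j(k)}$ consists of a single factor of $h$ times a triple convolution over $k=k^1+k^2-k^3$ with an integral against $\sigma\in[0,1]$. The rescaling $\FF(\vv)^k_j = \tilde{\epsilon}^{-\max([[k]],2)}\NN(\tilde{\epsilon}^{[[\cdot]]}\vv)^k_j$ produces the total power $\tilde{\epsilon}^{[[k^1]]+[[k^2]]+[[k^3]]-\max([[k]],2)}$; the elementary combinatorial inequality $[[k^1]]+[[k^2]]+[[k^3]] \geq \max([[k]],2)+1$, valid whenever $k=k^1+k^2-k^3$, supplies the desired single factor of $\tilde{\epsilon}$. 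Applying the standard convolution/algebra estimate for $\normmm{\cdot}_s$ with $s>d/2$ to the triple product, and pulling the $\sigma$-integral outside via $\check{\vv}$, yields $\normmm{\FF(\vv)}_s \leq C\tilde{\epsilon}h\normmm{\check{\vv}}_s^3$. The Lipschitz estimate is obtained by writing $\abs{v_1}^2 v_1-\abs{v_2}^2 v_2$ via polarization as a sum of trilinear expressions, each containing at least one factor $v_1-v_2$, and reapplying the same convolution argument to each summand. The hatted counterparts require no new ideas: the weights $\abs{\omega^{(2s-d-1)\abs{k}/4}}$ are tuned so that $\normmm{\cdot}_{(d+1)/2}$ plays exactly the role that $\normmm{\cdot}_s$ played for the unhatted variables.

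The main obstacle is the combinatorial inequality $[[k^1]]+[[k^2]]+[[k^3]] \geq \max([[k]],2)+1$. The piecewise definition of $[[k]]$ distinguishes the diagonal case $k=\langle j\rangle$, where $[[k]]=1$ rather than $(\norm{k}+1)/2$, so one must perform a careful case analysis according to how many of the $k^i$ are themselves of the form $\langle j^i\rangle$ in order to guarantee uniformly that the required extra power of $\tilde{\epsilon}$ is actually produced. A secondary technicality is the uniform lower bound on $\abs{\sinc(h\omega_j/2)}$, which is precisely where the CFL-type hypothesis from Theorem \ref{main theo} has to be invoked rather than merely assumed.
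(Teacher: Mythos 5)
Your treatment of the two $\FF$-estimates and of the hatted variants follows the paper's own route: the paper likewise extracts the single surviving power of $\tilde{\epsilon}$ from the inequality $[[k^1]]+[[k^2]]+[[k^3]]\geq \max([[k]],2)+1$ (which it simply quotes from \cite{18} rather than re-deriving it by case analysis), then applies the trilinear convolution estimate for $\normmm{\cdot}_s$ with the $\sigma$-integral absorbed into $\check{\vv}$, and obtains the Lipschitz bound from a telescoping identity for $a_1a_2a_3-b_1b_2b_3$ in which every summand carries a difference factor --- exactly your polarization argument. The hatted case is dispatched in the paper with the same one-line remark you make.

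The first estimate, however, contains a genuine error. By \eqref{21a2}, on $\mathcal{L}_{\tilde{\epsilon},M,h}$ the operator $\WW$ multiplies by $2\ii\sin\big(\frac{1}{2}h(\omega_j-k\cdot\omega)\big)/\sinc\big(\frac{1}{2}h\omega_j\big)$, so the multiplier of $\WW^{-1}$ has $\sinc\big(\frac{1}{2}h\omega_j\big)$ in the \emph{numerator}; all that is needed is the trivial upper bound $\abs{\sinc}\leq 1$ together with the lower bound $\abs{2\sin\big(\frac{1}{2}h(\omega_j-k\cdot\omega)\big)}>\tilde{\epsilon}^{1/2}h$ obtained by negating the defining inequality of \eqref{near-resonant R}, and this already yields the constant-free bound $\tilde{\epsilon}^{-1/2}h^{-1}$ exactly as stated. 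You instead invoke ``a uniform lower bound on $\abs{\sinc(\frac{1}{2}h\omega_j)}$''. No such bound exists in general (the frequencies $\omega_j=\abs{j}^2/\varepsilon$ can place $\frac{1}{2}h\omega_j$ arbitrarily close to a nonzero multiple of $\pi$, where $\sinc$ vanishes), it does not follow from the hypothesis on $\max_{j}\abs{\cos(\frac{1}{2}h\omega_j)}^{-1}$ --- that quantity controls a different denominator and enters only later, in the almost-invariant argument --- and it is not needed here. As written, your argument for the first inequality rests on a false auxiliary claim and would also produce a spurious multiplicative constant that the statement does not allow; the repair is simply to put the $\sinc$ factor on the correct side of the fraction.
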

\begin{proof} The proof is given in Appendix I. \end{proof}

 We next consider  the iterated modulation functions given in
\eqref{21a1}. Their sizes are controlled by the
following result.
\begin{prop}\label{pro: f} {(\textbf{Size of the iterated modulation functions.})}
For $0\leq\tilde{\tau}=\tilde{\epsilon} t\leq 1$ and for all $l\geq 0$, it is true
that
\begin{equation}\label{bounds f12}
\begin{aligned}
&|||\big[\mathbf{a}(\tilde{\tau})\big]^{l}|||_s\leq C,\ \
|||\big[\mathbf{a}^{(n)}(\tilde{\tau})\big]^{l}|||_s\leq C\tilde{\epsilon},\ \
\textmd{for}\ \ n\geq1,\\
&|||\big[\mathbf{b}^{(n)}(\tilde{\tau})\big]^{l}|||_s\leq
C\tilde{\epsilon}^{\frac{1}{2}},\ \ \textmd{for}\ \ n\geq0,
\end{aligned}
\end{equation}
 where the constant $C$ depends only on $C_0, d, n,
s$ and the norm of $V$. For  $\hat{\aa}$ and $\hat{\bb}$ instead of
$\aa$ and $\bb$, the same estimates  are true if  $|||\cdot|||_s$ is
replaced by $|||\cdot|||_{\frac{d+1}{2}}$. It follows from these
bounds that the modulated Fourier expansion of the numerical scheme
$\tilde{u}$ is bounded by
\begin{equation}\label{O}\norm{\tilde{u}(t,\cdot)}_s\leq C\tilde{\epsilon}\end{equation}
and   its coefficients $z$ are  controlled by
\begin{equation}\label{OO}\sum\limits_{j\in \mathcal{M}}\abs{\omega_j}^s
 \abs{z_j^{\langle j\rangle}}^2\leq C\tilde{\epsilon}^2,\ \
\sum\limits_{j\in \mathcal{M}}\abs{\omega_j}^s
\big(\sum\limits_{k\neq\langle j\rangle}\abs{z_j^{k}}\big)^2\leq
C\tilde{\epsilon}^5.\end{equation}
\end{prop}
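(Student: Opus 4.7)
The plan is induction on the iteration index $l$, establishing the bounds \eqref{bounds f12} together with the derivative estimates simultaneously. For the base case $l=0$, the starting iterates $[b_j^{k}]^0 = 0$ and $[a_j^{\langle j\rangle}]^0 = \tilde{\epsilon}^{-1} u_j^0$ combined with the small-data hypothesis $\|u^0\|_{H^s} \leq \tilde{\epsilon}$ from \eqref{initi cond} immediately give $|||[\aa]^0|||_s \leq 1$, while all other bounds hold trivially.

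For the inductive step, the key observation is that $\vv = L_3^{k}(\sigma)(\aa+\bb)$, where $L_3^{k}(\sigma)$ is a convex combination of unitary slow-time translation operators, so $|||\check{\vv}|||_s \leq |||\aa|||_s + |||\bb|||_s \leq C$ by the induction hypothesis. Proposition \ref{pro: lem1} then supplies $|||\FF(\vv)|||_s \leq C\tilde{\epsilon} h$. Inserting this into the $\bb$-iteration in \eqref{variables iteration} and using $|||\WW^{-1}|||_s \leq \tilde{\epsilon}^{-1/2} h^{-1}$ yields $|||[\bb]^{l+1}|||_s \leq C\tilde{\epsilon}^{1/2}$, provided the $\WW^{-1}\BB(\bb)$ contribution is of the same order. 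The latter holds because each term of $\BB$ carries an explicit factor $\tilde{\epsilon} h^2$ (or higher powers of $\tilde{\epsilon}$) multiplied by a slow-time derivative of $\bb$ already controlled by the induction hypothesis. For $[a_j^{\langle j\rangle}]^{l+1}$, the slow-time ODE in \eqref{variables iteration} directly bounds $|||[\dot{\aa}]^{l+1}|||_s \leq C\tilde{\epsilon}$ via $|||\FF(\vv)|||_s \leq C\tilde{\epsilon} h$ and the boundedness of $\sinc(h\omega_j/2)/h$; integrating from $\tilde{\tau}=0$ on $[0,1]$, using the prescribed initial condition, gives $|||[\aa]^{l+1}|||_s \leq C$.

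Derivative estimates at level $l+1$ are obtained by differentiating the iteration equations in $\tilde{\tau}$: slow-time derivatives commute with $\WW^{-1}$ and act inside $\BB$, $\AA$, $\FF$, where the Lipschitz bound from Proposition \ref{pro: lem1} propagates the induction term by term. The rescaled variables $\hat{\aa}, \hat{\bb}$ are treated identically with $|||\cdot|||_{(d+1)/2}$ replacing $|||\cdot|||_s$, using the matching bounds on $\hat{\FF}$ given by Proposition \ref{pro: lem1}. The consequences \eqref{O} and \eqref{OO} then follow from $z_j^{\langle j\rangle} = \tilde{\epsilon} a_j^{\langle j\rangle}$ and $z_j^{k} = \tilde{\epsilon}^{[[k]]}(a_j^{k}+b_j^{k})$, using $[[k]] \geq 2$ for $k \neq \langle j\rangle$, so that each off-diagonal mode contributes at order $\tilde{\epsilon}^{5/2}$ to the $|||\cdot|||_s$-norm.

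The principal obstacle lies in the operators $\BB$ and $\AA$: their formal series involve arbitrarily many slow-time derivatives of $\aa, \bb$ multiplied by trigonometric prefactors of the form $\omega_j \csc(h\omega_j/2)$, and one must verify that the explicit powers of $\tilde{\epsilon}$ and $h$ suffice to dominate any growth from these coefficients. This is precisely where the boundedness of $\max_{j\in\mathcal{M}}\{1/|\cos(h\omega_j/2)|\}$ appearing in Theorem \ref{main theo} enters, while the near-resonant modes in $\mathcal{R}_{\tilde{\epsilon},M,h}$ --- on which the denominator $\sin(h(\omega_j - k\cdot\omega)/2)$ in $\WW$ could vanish --- have been explicitly zeroed out in Definition \ref{def:Iterative sol}, so they never contribute to the iteration. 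The non-resonance condition \eqref{non-resonance cond} is what quantifies that this truncation costs only higher-order powers of $\tilde{\epsilon}$.
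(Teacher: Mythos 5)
Your proposal follows essentially the same route as the paper's Appendix II: induction on $l$ through the rescaled iteration \eqref{variables iteration}, with Proposition \ref{pro: lem1} supplying the bounds on $\FF$ and $\WW^{-1}$, and the consequences \eqref{O}--\eqref{OO} read off from the splitting $z_j^{k}=\tilde{\epsilon}^{[[k]]}(a_j^{k}+b_j^{k})$ together with $[[k]]\geq 2$ off the diagonal. The only quibble is your final remark that $\max_{j}1/\abs{\cos(\frac{1}{2}h\omega_j)}$ enters here: that constant is needed only in the almost-invariant argument, while in this proposition the $\csc(\frac{1}{2}h\omega_j)$ prefactors of $\BB$ and $\AA$ are cancelled by the $\sinc(\frac{1}{2}h\omega_j)$ normalization built into the iteration, so this does not affect the correctness of your argument.
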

\begin{proof} The proof is given in Appendix II. \end{proof}

\subsubsection{Defect of the iterated modulation functions}\label{subsec:Defect}

After $l$ iterations, the defect in the modulation system \eqref{Pic
ite z} with the initial value \eqref{initial pl} has the form
\begin{equation}\label{Defect z}
\begin{aligned}&\big[d_j^{k}\big]^{l}=\Big[\ii h\omega_j\csc\big(\frac{1}{2}h\omega_j\big)\sin\big(\frac{1}{2}h(\omega_j-k
\cdot\omega)\big)z_{j}^{k}-\BB(\zz)_{j}^{k}-\NN(\www)_{j}^{k}\Big]^{l},\\
&\big[\tilde{d}_j^{\langle j\rangle}(0)\big]^{l} =u_j^0-\Big[
\sum\limits_{k} z_{j(k)}^{k}(0)\Big]^{l}.
\end{aligned}
\end{equation}
Clearly, it can be   decomposed into four parts:
$\big[d_j^{k}\big]^{l}=\big[e_j^{k}+f_j^{k}+g_j^{k}+\dot{h}_j^{k}\big]^{l},$
where $[e_j^{k}]^{l}=0$  for  $(j,
k)\in\mathcal{L}_{\tilde{\epsilon},M,h}$,  $[f_j^{k}]^{l}=0$ for
non-near resonant indices $(j,
k)\in\mathcal{R}_{\tilde{\epsilon},M,h}$, $[\dot{h}_j^{k}]^{l}=0$
for $ k\neq \langle j\rangle$, and $[g_j^{k}]^{l}=0$ for
$ \norm{k}\leq K$. The size of each part can be estimated
as follows.
\begin{prop}\label{pro: defect} {(\textbf{Defect of the iterated modulation
functions.})} For all $l\geq 0$ and for $0\leq\tilde{\tau}=\tilde{\epsilon} t\leq
1$, it is true that
\begin{equation}\label{defects lem1}
\begin{aligned}
&|||[\mathbf{f}(\tilde{\tau})]^{l}|||_s\leq C \tilde{\epsilon}^{N+3}h,\ \
  |||[\mathbf{g}(\tilde{\tau})]^{l}|||_s\leq C \tilde{\epsilon}^{N+3}h,\\
&|||[\mathbf{e}(\tilde{\tau})]^{l}|||_s\leq C \tilde{\epsilon}^{\frac{p+4}{2}}h,\ \
  |||[\dot{\mathbf{h}}(\tilde{\tau})]^{l}|||_s\leq C \tilde{\epsilon}^{\frac{p+4}{2}}h,\\
&|||[\tilde{\mathbf{d}}(0)]^{l}|||_s\leq C
\tilde{\epsilon}^{\frac{p+2}{2}}h,
\end{aligned}
\end{equation}
where the constant $C$ depends on $C_0 , d, p, s$ and the norm of
$V$. We have the same estimates for $\hat{\mathbf{e}}$ and
$\hat{\mathbf{h}}$ instead of $\mathbf{e}$ and $\mathbf{h}$ provided
 $|||\cdot|||_s$ is replaced by $|||\cdot|||_{\frac{d+1}{2}}$.
\end{prop}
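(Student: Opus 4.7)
The plan is to obtain the five estimates by inserting the $l$-th iterate into the modulation equations \eqref{Pic ite z-0}--\eqref{Pic ite z2} and tracing, piece by piece, what the iterative construction of Definition \ref{def:Iterative sol} has either discarded or replaced. In view of Definition \ref{def:Iterative sol}, the defect $[d_j^{\mathbf{k}}]^l$ is identically zero whenever the iterate itself solves the defining relation. So $[\mathbf{e}]^l$ is supported on $\mathcal{L}_{\tilde{\epsilon},M,h}$, $[\mathbf{f}]^l$ on $\mathcal{R}_{\tilde{\epsilon},M,h}$, $[\mathbf{g}]^l$ on $\|\mathbf{k}\|>K=2N+2$, and $[\dot{\mathbf{h}}]^l$ on $\mathbf{k}=\langle j\rangle$. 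Throughout, I will use the norm bounds of Proposition \ref{pro: f} on the iterates together with the estimates for $\WW^{-1}$, $\BB$, and $\FF$ from Proposition \ref{pro: lem1}, exactly as in \cite{12,19}, adapted to the additional $\sigma$-integral that \eqref{ljkqp} inherits from the continuous-stage form of EP1.

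First I would handle $[\mathbf{g}]^l$ and $[\mathbf{f}]^l$, which are the easier ``structural'' defects. For $\|\mathbf{k}\|>K$ the iterate $[z_j^{\mathbf{k}}]^l=0$, so $[d_j^{\mathbf{k}}]^l=-\NN(\www)_{j}^{\mathbf{k}}-\BB(\zz)_{j}^{\mathbf{k}}$; substituting the rescaling \eqref{21a3} and summing only over multi-indices with $\|\mathbf{k}^1\|+\|\mathbf{k}^2\|+\|\mathbf{k}^3\|\geq K$ yields a factor $\tilde{\epsilon}^{\max([[\mathbf{k}]],2)}\geq \tilde{\epsilon}^{N+3}$, times the uniform size bound on $\check{\vv}$. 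The tri-linear bound on $\FF$ in \eqref{bounds lem1} then gives $|||[\mathbf{g}]^l|||_s\lesssim \tilde{\epsilon}^{N+3}h$. For $[\mathbf{f}]^l$ the indices $(j,\mathbf{k})\in\mathcal{R}_{\tilde{\epsilon},M,h}$ were again set to zero, so only $-\NN-\BB$ survives, but the division by $\sin(\tfrac{h}{2}(\omega_j-\mathbf{k}\cdot\omega))$ has not been performed; inserting the convolution bound from Proposition \ref{pro: f} and invoking the non-resonance hypothesis \eqref{non-resonance cond} (which supplies the missing power $\tilde{\epsilon}^{2N+4}$ when rescaling via $\hat{b}_j^{\mathbf{k}}$) converts $|||\mathbf{f}|||_s$ into $|||\hat{\mathbf{f}}|||_{(d+1)/2}$ and gives the claimed $\tilde{\epsilon}^{N+3}h$.

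Next I would treat $[\mathbf{e}]^l$, $[\dot{\mathbf{h}}]^l$ and $[\tilde{\mathbf{d}}(0)]^l$, all of which measure the gap between the $l$-th and $(l+1)$-st iterate. In the rescaled variables \eqref{variables iteration}, each iteration step is a composition of $\WW^{-1}$ (which supplies a factor $\tilde{\epsilon}^{-1/2}h^{-1}$, modulo the safe indices) with the cubic map $\FF$ (which supplies $\tilde{\epsilon} h$). Hence the map is a contraction of ratio $\tilde{\epsilon}^{1/2}$ acting on a set bounded uniformly in $l$ by Proposition \ref{pro: f}. Applying this contraction $p$ times to the initial discrepancy $[\mathbf{b}]^1-[\mathbf{b}]^0$ gives $|||[\mathbf{e}]^l|||_s\lesssim \tilde{\epsilon}^{(p+4)/2}h$, and the analogous telescoping for the diagonal ODE in \eqref{variables iteration} yields $|||[\dot{\mathbf{h}}]^l|||_s\lesssim \tilde{\epsilon}^{(p+4)/2}h$. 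The initial-value defect $\tilde{\mathbf{d}}(0)$ loses one factor of $\tilde{\epsilon}^{1/2}$ compared to $\mathbf{e}$ because it is corrected at $t=0$ only through $b(0)$ without an application of $\WW^{-1}$, producing $\tilde{\epsilon}^{(p+2)/2}h$. All three statements transfer to the $\hat{}$-variables and $|||\cdot|||_{(d+1)/2}$-norm because the rescaling $\omega^{(2s-d-1)|\mathbf{k}|/4}$ commutes with $\WW$ and was explicitly arranged (see \eqref{21a2}--\eqref{21a3}) so that $\hat{\FF}$ satisfies the same tame estimates as $\FF$, by Proposition \ref{pro: lem1}.

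The main obstacle, in my view, is bookkeeping the powers of $\tilde{\epsilon}$ through the mixed regime created by the near-resonance set $\mathcal{R}_{\tilde{\epsilon},M,h}$: the weight $\sin(\tfrac{h}{2}(\omega_j-\mathbf{k}\cdot\omega))$ is borderline there, so one must combine the non-resonance bound \eqref{non-resonance cond} with the ``diagonal'' factor $\tilde{\epsilon}^{1/2}h$ in the definition of $\WW$ on near-resonant indices to avoid losing the critical $\tilde{\epsilon}^{N+3}$ in $\mathbf{f}$. A secondary subtlety is that EP1's continuous-stage form produces the $\sigma$-integral in $\NN$; the simple estimate $\int_0^1(\cdot)\,d\sigma\leq \sup_\sigma(\cdot)$ used in Proposition \ref{pro: lem1} is enough, because $L_3^{\mathbf{k}}(\sigma)$ is a convex combination of isometries and hence does not disturb the sizes. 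With these two technical points in place, the rest is a routine application of the contraction argument of \cite{12,19} in the rescaled variables.
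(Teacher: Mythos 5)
Your proposal follows essentially the same route as the paper's Appendix III: the same support decomposition of the defect into $\mathbf{e},\mathbf{f},\mathbf{g},\dot{\mathbf{h}},\tilde{\mathbf{d}}(0)$, the non-resonance condition \eqref{non-resonance cond} combined with the $\hat{\FF}$-bounds in the $|||\cdot|||_{(d+1)/2}$-norm for $\mathbf{f}$, the count $[[\mathbf{k}]]\geq N+2$ plus the trilinear bound on $\FF$ for $\mathbf{g}$, and the representation of $\mathbf{e}$ and $\dot{\mathbf{h}}$ as $\WW$ applied to differences of consecutive iterates handled by the contraction/Lipschitz argument of \cite{12,19}. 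Aside from a harmless slip in the intermediate exponent for $\mathbf{g}$ (the factor is $\tilde{\epsilon}^{[[\mathbf{k}]]}\leq\tilde{\epsilon}^{N+2}$, with the remaining $\tilde{\epsilon}h$ coming from $\FF$), the argument matches the paper's.
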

\begin{proof} The proof is given in Appendix III. \end{proof}

\subsubsection{The numerical solution on short time intervals}\label{subsec:solution on short time}
In this subsection,  the size of the numerical solution $u^n$ on a
short time interval of length $\tilde{\epsilon}$ is studied. It is noted
that since the considered integrator is implicit,   fixed point
arguments are considered for EP1 and we rewrite it as  the
following scheme
\begin{equation}\label{newEAVFmethod}%
\begin{aligned}
U^{n+1}=e^{V}u^{n}+h\varphi_1(V)
\displaystyle\int_{0}^{1}f((1-\sigma)u^{n}+\sigma U^{n+1})d\sigma,\\
u^{n+1}=e^{V}u^{n}+h\varphi_1(V)
\displaystyle\int_{0}^{1}f((1-\sigma)u^{n}+\sigma U^{n+1})d\sigma.
\end{aligned}
\end{equation}

\begin{prop}\label{pro: on short time} {(\textbf{The numerical solution on short time intervals.})} For $0\leq t_n=nh\leq \tilde{\epsilon}^{-1}$ with a
sufficiently small $\tilde{\epsilon}$, it is obtained that $||u^n||_s\leq
2\tilde{\epsilon}.$
\end{prop}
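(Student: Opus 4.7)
The idea is to combine a contraction-mapping argument for the implicit step \eqref{newEAVFmethod} with a discrete bootstrap on $n$ that exploits the cubic structure of $f(u)=-\ii\mathcal{Q}(|u|^2 u)$. The time horizon $t_n\le\tilde{\epsilon}^{-1}$ provides ample slack since, at solution size $\tilde{\epsilon}$, the nonlinear contribution per step is $\mathcal{O}(h\tilde{\epsilon}^3)$ and accumulates over the $\tilde{\epsilon}^{-1}/h$ allowed time steps to only $\mathcal{O}(\tilde{\epsilon}^2)\ll\tilde{\epsilon}$.

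I would argue by strong induction on $n$. Assume $\norm{u^n}_s\le 2\tilde{\epsilon}$ and set $R=3\tilde{\epsilon}$. For this fixed $u^n$, consider the map
\[
\Phi(V)=e^V u^n+h\varphi_1(V)\int_0^1 f\big((1-\sigma)u^n+\sigma V\big)\,d\sigma
\]
on the ball $B_R=\{V\in H^s:\norm{V}_s\le R\}$. By Proposition~\ref{pro bo}, $e^V$ and $\varphi_1(V)$ are bounded on $H^s$, and since $H^s$ with $s>d/2$ is a Banach algebra the cubic nonlinearity satisfies $\norm{f(w)}_s\lesssim \norm{w}_s^3$. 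Hence $\norm{\Phi(V)}_s\le 2\tilde{\epsilon}+Ch(3\tilde{\epsilon})^3\le R$ once $\tilde{\epsilon}$ is small enough. The local Lipschitz property of $f$ (as in Proposition~\ref{capro}, upgraded to reflect the cubic scaling on the ball $B_R$) then gives $\norm{\Phi(V_1)-\Phi(V_2)}_s\le Ch\tilde{\epsilon}^2\norm{V_1-V_2}_s\le\tfrac{1}{2}\norm{V_1-V_2}_s$, so $\Phi$ is a contraction and admits a unique fixed point $U^{n+1}=u^{n+1}\in B_R$.

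Next, using the isometry $\norm{e^V u^n}_s=\norm{u^n}_s$ together with $\norm{U^{n+1}}_s\le 3\tilde{\epsilon}$ and the boundedness of $\varphi_1(V)$, I would derive the one-step estimate
\[
\norm{u^{n+1}}_s\le \norm{u^n}_s+C'h\tilde{\epsilon}^3.
\]
Iterating over $n$ time steps with $nh\le\tilde{\epsilon}^{-1}$ yields
\[
\norm{u^n}_s\le \norm{u^0}_s+C'(nh)\tilde{\epsilon}^3\le \tilde{\epsilon}+C'\tilde{\epsilon}^2\le 2\tilde{\epsilon}
\]
for $\tilde{\epsilon}$ sufficiently small, which closes the induction.

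The main obstacle will be extracting the cubic scaling $\norm{f(w)}_s\lesssim\norm{w}_s^3$ rather than the uniform bound $\norm{f(w)}_s\le M$ given by Proposition~\ref{capro}: the latter would produce only $\mathcal{O}(h)$ growth per step and hence $\mathcal{O}(\tilde{\epsilon}^{-1})$ total growth, which destroys the bound. This requires invoking the Sobolev-algebra property for $s>d/2$ and handling the trigonometric interpolation $\mathcal{Q}$ by standard aliasing estimates; once this is in place, no smallness on $h$ beyond $h\lesssim 1$ is needed, and the argument is independent of $\varepsilon$ and of the modulated-Fourier machinery developed in the rest of the section.
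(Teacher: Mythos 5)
Your proposal is correct and follows essentially the same route as the paper: induction on $n$ with the quantitative bound $\norm{u^n}_s\le\tilde{\epsilon}+Cnh\tilde{\epsilon}^3$, a Banach fixed-point argument showing the implicit stage $U^{n}$ stays in the ball of radius $3\tilde{\epsilon}$ with contraction constant $\mathcal{O}(h\tilde{\epsilon}^2)$, and the cubic bound on the nonlinearity to make the per-step growth $\mathcal{O}(h\tilde{\epsilon}^3)$. The only cosmetic difference is that the paper imports the cubic and Lipschitz estimates for $f$ in the $\norm{\cdot}_s$ norm from (4.9) and (4.11) of \cite{12} rather than rederiving them from the Sobolev-algebra property as you suggest.
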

\begin{proof}
This result is proved by   induction on $n$ that
\begin{equation}\label{induc}
||u^n||_s\leq \tilde{\epsilon}+125 Cnh \tilde{\epsilon}^3\qquad \textmd{for} \quad
0\leq nh  \leq \tilde{\epsilon}^{-1}
\end{equation}
and by letting $\tilde{\epsilon}$ be sufficiently small compared to $C$.

For $n = 0$ the estimate \eqref{induc} is clear by considering
\eqref{initi cond}. For $n
> 0$, it follows from  the definition of the
integrator that
\begin{equation}\begin{aligned}\label{for123}
\norm{u^{n}}_s\leq&\norm{u^{n-1}}_s+h\norm{
\displaystyle\int_{0}^{1}f((1-\sigma)u^{n-1}+\sigma
U^{n})d\sigma}_s\\
%\leq&\norm{u^{n-1}}_s+h \displaystyle\int_{0}^{1}
%\norm{(1-\sigma)u^{n-1}+\sigma
%U^{n}}_s^3 d\sigma\\
%\leq&\norm{u^{n-1}}_s+h \displaystyle\int_{0}^{1} \big(
%(1-\sigma)\norm{u^{n-1}}+\sigma \norm{U^{n}} \big)^3d\sigma\\
%\leq&\norm{u^{n-1}}_s+h \displaystyle\int_{0}^{1}
%\norm{(1-\sigma)u^{n-1}+\sigma
%U^{n}}_s^3 d\sigma\\
\leq&\norm{u^{n-1}}_s+h   \big(\norm{u^{n-1}}_s+ \norm{U^{n}}_s
\big)^3,
\end{aligned}\end{equation}
where    (4.9) of \cite{12} is used and $U^{n}$ is  a fixed point of
$$G: U\rightarrow e^{V}u^{n-1}+h\varphi_1(V)
\displaystyle\int_{0}^{1}f((1-\sigma)u^{n-1}+\sigma U)d\sigma.$$ For
$0\leq nh  \leq \tilde{\epsilon}^{-1}$, since $ \norm{u^{n-1}}_s\leq
2\tilde{\epsilon}\leq 3\tilde{\epsilon}$,  the function $G$ maps   the ball $\{U:
\norm{U}_s\leq 3\tilde{\epsilon}\}$ to itself.  Furthermore, using  (4.11)
in \cite{12},   we obtain
\begin{equation*}\begin{aligned}
&\norm{h\varphi_1(V)
\displaystyle\int_{0}^{1}f((1-\sigma)u^{n-1}+\sigma
U)d\sigma-h\varphi_1(V)
\displaystyle\int_{0}^{1}f((1-\sigma)u^{n-1}+\sigma
\tilde{U})d\sigma}_s\\
\leq&3 C \max\big(\norm{u^{n-1}}_s+\norm{
U}_s,\norm{u^{n-1}}_s+\norm{\tilde{U}}_s\big)^2
\norm{U-\tilde{U}}_s.
\end{aligned}\end{equation*}
 This  shows that the map $G$ has a Lipschitz constant smaller than
one for sufficiently small $\tilde{\epsilon}$ in the norm $\norm{\cdot}_s$
on the ball $\{U: \norm{U}_s\leq 3\tilde{\epsilon}\}$.  In view of  the
Banach fixed point theorem, one has $||U^{n}||_s\leq 3\tilde{\epsilon}$ for
the fixed point $U^{n}$ of $G$. Therefore, \eqref{induc} can be
obtained by    the induction hypothesis applied to \eqref{for123}.
 \end{proof}

\subsubsection{The error between the modulated Fourier expansion and the numerical solution}\label{subsec:MFE NS}
This subsection pays attention to the error $u^n-\tilde{u}(t,x)$
between the numerical solution $u^n$ and the modulated Fourier
expansion
$$\tilde{u}(t,x)=
\sum\limits_{k}[z_{j(k)}^{k}(\tilde{\epsilon}
t)]^L \mathrm{e}^{\mathrm{i}(j \cdot x)}
\mathrm{e}^{-\mathrm{i}(k \cdot\omega) t},$$
where the iterated modulation functions $z_{j}^{k} =
[z_{j}^{k}]^L$ after $L := 2N + 2$ iterations replace the
exact solution of the modulation system which is not available in
fact. For brevity, the index $L$ in the following analysis is
omitted.
%By a slight abuse of notation, we omit , keeping in mind that the
%modulation system is then satisfied only up to a small defect. We
%show that the modulated Fourier expansion $\tilde{u}(t,x)$ describes
%the numerical solution un up to a very small error on a short time
%interval of length $\tilde{\epsilon}^{-1}$. Again, as in the previous
%subsection, we employ fixed point arguments.

\begin{prop}\label{pro: small error} {(\textbf{The error between the modulated Fourier expansion and the numerical solution.})}
For $0\leq t_n=nh\leq \tilde{\epsilon}^{-1}$, it is obtained that
\begin{equation}\label{small error}
\begin{aligned}
&||u^n-\tilde{u}(t_n,x)||_s\leq C\tilde{\epsilon}^{N+2}
\end{aligned}
\end{equation}
for $\tilde{\epsilon}$ sufficiently small compared to $d, s$ and the norm of
the potential $V$.
\end{prop}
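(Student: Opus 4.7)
The plan is to regard the truncated modulated Fourier expansion $\tilde{u}(t_n,x)$ (with $L=2N+2$ iterations) as an approximate numerical solution of EP1 and then bound the difference $E^n:=u^n-\tilde{u}(t_n,x)$ by a discrete Gronwall argument. First I would plug $\tilde{u}$ into the symmetric one-step form \eqref{MFE-2} of EP1. By construction of the modulation equations \eqref{ljkqp}, substituting the expansion produces, at the $j$-th Fourier mode of the coefficient of $\mathrm{e}^{-\mathrm{i}(k\cdot\omega)t}$, exactly the defect $[d_j^{k}]^L=[e_j^{k}+f_j^{k}+g_j^{k}+\dot h_j^{k}]^L$ that Proposition \ref{pro: defect} controls. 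Using the bounds there, together with the modulation-function bounds of Proposition \ref{pro: f} to pass from the $|||\cdot|||_s$ norm of the coefficient array back to the $H^s$ norm of the physical residual, gives a per-step defect $\delta^n$ satisfying $\|\delta^n\|_s\le C\tilde\epsilon^{N+3}h$, while the initial mismatch obeys $\|E^0\|_s=\|u^0-\tilde{u}(0,\cdot)\|_s\le C\tilde\epsilon^{N+2}$ once $L=2N+2$ is chosen.

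Next I would propagate this residual in time. Writing EP1 in the two-line form \eqref{newEAVFmethod}, both $u^n$ and $\tilde{u}(t_n,\cdot)$ satisfy the same implicit recursion up to the additive defect $\delta^n$. Subtracting and using the isometry $\|e^V\cdot\|_s=\|\cdot\|_s$, the boundedness of $\varphi_1(V)$, and the Lipschitz estimate on $f$ from (4.11) of \cite{12} (which, at amplitude $\mathcal{O}(\tilde\epsilon)$ guaranteed by Proposition \ref{pro: on short time} and by \eqref{O}, costs a factor $\mathcal{O}(\tilde\epsilon^2)$), one obtains the one-step bound
\begin{equation*}
\|E^{n+1}\|_s\le (1+Ch\tilde\epsilon^2)\|E^n\|_s+Ch\tilde\epsilon^{N+3}.
\end{equation*}
A discrete Gronwall argument then gives $\|E^n\|_s\le e^{Cnh\tilde\epsilon^2}\bigl(\|E^0\|_s+Cnh\tilde\epsilon^{N+3}\bigr)$, and since $nh\le\tilde\epsilon^{-1}$ forces $nh\tilde\epsilon^2\le\tilde\epsilon$, the exponential factor is $\mathcal{O}(1)$, while the inhomogeneous term is $\mathcal{O}(\tilde\epsilon^{N+2})$. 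Combining with the initial bound yields the stated estimate.

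The main obstacle is the implicitness: the right-hand side of \eqref{newEAVFmethod} depends on $U^{n+1}$, so the naive subtraction of the two recursions leaves $E^{n+1}$ implicitly on both sides and forces one to control the difference $U^{n+1}-\tilde U^{n+1}$ of the inner fixed points. I would handle this by repeating the Banach fixed-point setup of Proposition \ref{pro: on short time}: on the ball $\{U:\|U\|_s\le 3\tilde\epsilon\}$ the fixed-point map has Lipschitz constant $Ch\tilde\epsilon^2\ll 1$, so $\|U^{n+1}-\tilde U^{n+1}\|_s\le (1-Ch\tilde\epsilon^2)^{-1}(\|E^n\|_s+\|\delta^n\|_s)$, which after absorbing the $1/(1-Ch\tilde\epsilon^2)$ into the constant reproduces the one-step inequality above. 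The remaining work is bookkeeping: checking that the summed modulated-expansion residual does carry the claimed factor $\tilde\epsilon^{N+3}h$ (here the factor $h$ in Proposition \ref{pro: defect} is essential, because we pick up one per step and thus need the total to be $\mathcal{O}(\tilde\epsilon^{N+2})$ after $\mathcal{O}(1/(h\tilde\epsilon))$ steps) and that the powers of $\tilde\epsilon$ from the rescalings $z=\tilde\epsilon^{[[k]]}(a+b)$ and $\hat z=\omega^{\frac{2s-d-1}{4}|k|}z$ combine correctly when summed against the near/non-near resonant index sets.
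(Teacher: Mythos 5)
Your proposal is correct and follows essentially the same route as the paper: both treat $\tilde{u}$ as satisfying the EP1 recursion up to the defect controlled by Proposition \ref{pro: defect} (giving the per-step residual $Ch\tilde{\epsilon}^{N+3}$ and the initial mismatch from $[\tilde{\mathbf{d}}(0)]^L$), both resolve the implicitness by the Banach fixed-point argument of Proposition \ref{pro: on short time} to bound $U^{n}-\tilde{U}(t_n,x)$ in terms of $u^{n-1}-\tilde{u}(t_{n-1},x)$ plus the defect, and both close with the Lipschitz estimate (4.11) of \cite{12} at amplitude $\mathcal{O}(\tilde{\epsilon})$ and an induction/Gronwall step over $nh\leq\tilde{\epsilon}^{-1}$. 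No substantive differences to report.
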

\begin{proof}
As stated in the previous  subsection,   fixed point arguments are
employed.   By the definition of the modulation system
\eqref{ljkqp}-\eqref{eta modula sys} and fixed point arguments, it
is arrived at that
\begin{equation*}
\begin{aligned}&\tilde{u}(t_n,x)=e^{V}\tilde{u}(t_{n-1},x)+h\varphi_1(V)
\displaystyle\int_{0}^{1}f((1-\sigma)\tilde{u}(t_{n-1},x)+\sigma
\tilde{U}(t_{n},x))d\sigma+\delta(t_{n},x)\end{aligned}\end{equation*}
with the defect $\delta(t,x)=
\sum\limits_{k}d_{j(k)}^{k}(\tilde{\epsilon} t)
\mathrm{e}^{\mathrm{i}(j \cdot x)}
\mathrm{e}^{-\mathrm{i}(k \cdot\omega)
(t+h)}.$ Here we have the following result
\begin{equation*}
\begin{aligned}(1-\sigma)\tilde{u}(t_{n-1},x)+\sigma
\tilde{U}(t_{n},x)= \sum\limits_{\norm{k}\leq
K}w_{j(k)}^{k}\Big(\tilde{\epsilon}(t+\frac{h}{2}),\sigma\Big)\mathrm{e}^{\mathrm{i}(j(k)
\cdot x)} \mathrm{e}^{-\mathrm{i}(k
\cdot\omega) (t+\frac{h}{2})}.
\end{aligned}
\end{equation*}
It follows from Proposition \ref{pro: defect} that
$||\delta(t,x)||_s\leq Ch\tilde{\epsilon}^{N+3}$
 for $0\leq t\leq \tilde{\epsilon}^{-1}$, where the constant $C$ depends
on $C_0, d, N, s$ and the norm of the potential $V$.

$\bullet$ Proof of the difference $U^{n}-\tilde{U}(t_{n},x)$.

For the solution $U^{n}$ appearing in the numerical method
\eqref{newEAVFmethod}, we first examine the difference
$U^{n}-\tilde{U}(t_{n},x)$.     By \eqref{ljkqp}-\eqref{eta modula
sys} and \eqref{Defect z}, $\tilde{U}(t_{n},x)$ is a fixed point of
$$\tilde{G}: \tilde{U}\rightarrow e^{V}\tilde{u}(t_{n-1},x)+h\varphi_1(V)
\displaystyle\int_{0}^{1}f((1-\sigma)\tilde{u}(t_{n-1},x)+\sigma
\tilde{U})d\sigma+\delta(t_{n-1},x).$$

 Obviously, it follows from the proof of Proposition \ref{pro: on
short time}  that  the fixed point iteration $[U]^l = G([U]^{l-1}),\
[U]^0 = e^{V}u^{n-1}$ converges in the norm $\norm{\cdot}_s$ to
$U^{n}$ and is bounded in this norm by $3\tilde{\epsilon}$. In what follows,
 we study the error between $[U]^l$ and $\tilde{U} =
\tilde{U}(t_{n},x)$, i.e., $[U]^l -\tilde{U}$, for $l=0,\ldots,$.

 On noticing the fact   $||\tilde{U} ||_s\leq C\tilde{\epsilon}$ by
Proposition \ref{pro: f} and the property (4.9) of \cite{12}, we
obtain the estimate of the defect for $l=0$
\begin{equation*}
\begin{aligned}&\norm{[U]^0-\tilde{U}}_s=\norm{e^{V}u^{n-1}-\tilde{G}(\tilde{U})}_s\\
\leq& \norm{u^{n-1}-\tilde{u}(t_{n-1},x)}_s+h\norm{
\int_{0}^{1}f((1-\sigma)\tilde{u}(t_{n-1},x)+\sigma
\tilde{U})d\sigma}_s+
\norm{\delta(t_{n-1},x)}_s\\
\leq&
\norm{u^{n-1}-\tilde{u}(t_{n-1},x)}_s+Ch\tilde{\epsilon}^{3}+Ch\tilde{\epsilon}^{N+3}.
\end{aligned}
\end{equation*}
For $l > 0$,  using (4.11) of \cite{12} gives  that
\begin{equation*}
\begin{aligned}&\norm{[U]^l-\tilde{U}}_s=\norm{G([U]^{l-1})-\tilde{G}(\tilde{U})}_s\\
\leq&
\norm{u^{n-1}-\tilde{u}(t_{n-1},x)}_s+Ch\tilde{\epsilon}^{2}\norm{[U]^{l-1}-\tilde{U}}_s+Ch\tilde{\epsilon}^{N+3}
\end{aligned}
\end{equation*}
with a constant $C$ independent of $l$. This leads to a recursion on
$l$ as follows
\begin{equation*}
\begin{aligned}\norm{[U]^l-\tilde{U}}_s
\leq\big(\norm{u^{n-1}-\tilde{u}(t_{n-1},x)}_s+Ch\tilde{\epsilon}^{N+3}\big)\sum\limits_{j=0}^l(Ch\tilde{\epsilon}^{2})^j+Ch\tilde{\epsilon}^{3}(Ch\tilde{\epsilon}^{2})^l.
\end{aligned}
\end{equation*}
Considering $l \rightarrow\infty$ and
$Ch\tilde{\epsilon}^{2}\leq\frac{1}{2}$ implies
\begin{equation}\label{UU}
\norm{U^{n}-\tilde{U}(t_{n},x)}_s \leq2
\norm{u^{n}-\tilde{u}(t_{n},x)}_s+2Ch\tilde{\epsilon}^{N+3}.
\end{equation}

$\bullet$ Proof of the difference $u^n-\tilde{u}(t_n,x)$.

We are now in a position to consider $u^n-\tilde{u}(t_n,x)$. When $n
> 0$,  using (4.11) of \cite{12} gives
\begin{equation*}
\begin{aligned}\norm{u^n-\tilde{u}(t_n,x)}_s
\leq
\norm{u^{n-1}-\tilde{u}(t_{n-1},x)}_s+Ch\tilde{\epsilon}^{2}\norm{U^{n}-\tilde{U}(t_{n},x)}_s
+Ch\tilde{\epsilon}^{N+3}.
\end{aligned}
\end{equation*}
Considering again the result \eqref{UU},  we have by induction on
$n$
\begin{equation}\label{rea induc}
\norm{u^n-\tilde{u}(t_n,x)}_s \leq
(1+2Ch\tilde{\epsilon}^{2})^n\big(Cnh\tilde{\epsilon}^{N+3}+\norm{u^0-\tilde{u}(0,x)}_s\big).
\end{equation}
On the other hand,  by Proposition \ref{pro: defect} with the defect
$\tilde{\mathbf{d}}$  in the initial condition, we have
$\norm{u^0-\tilde{u}(0,x)}_s\leq|||[\tilde{\mathbf{d}}(0)]^n|||_s\leq
C \tilde{\epsilon}^{N+3}.$ This result together with \eqref{rea induc}
 guarantees  the desired result if $\tilde{\epsilon}$ is sufficiently small.

 \end{proof}

\subsubsection{Almost invariants close to the actions}\label{subsec:Almost invariants}
In what follows, we show  \emph{an invariant} of the
modulation system   and  \emph{its relationship with the
  actions}.

\begin{prop} {(\textbf{Almost invariant.})}  There exits  $\tilde{\epsilon} \mathcal{J}_{\langle
j\rangle}(\tilde{\tau})$ such that $$
\sum\limits_{j\in  \mathcal{M}}\abs{\omega_j}^{s} \left|\frac{d}{d
\tilde{\tau}} \mathcal{J}_{\langle j\rangle}(\tilde{\tau})\right|\leq
Ch\tilde{\epsilon}^{N+3},$$
where  $\tilde{\tau}\leq 1$  and  $C$ depends on $\max_{j\in
\mathcal{M}}\big \{ \frac{1}{\abs{\cos(\frac{1}{2}h
\omega_j)}}\big\}$.  Moreover, it is true that $$ \mathcal{J}_{\langle j\rangle}(\tilde{\tau})= \frac{1}{2}
\abs{z_{j}^{\langle j\rangle}(\tilde{\tau})}^2+\mathcal{O}\Big(h
\tilde{\epsilon}^2\Big).$$
\end{prop}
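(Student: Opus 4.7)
The plan is to construct the almost invariants $\mathcal{J}_{\langle j\rangle}$ via Noether's theorem applied to a gauge symmetry of the modulation system \eqref{ljkqp}--\eqref{eta modula sys}. First, I would rewrite the modulation equations as the stationarity condition for an extended potential $\mathcal{U}(\mathbf{z},\dot{\mathbf{z}})$ assembled from (i) a quadratic piece encoding the linear operators $L^{k}$ applied to $z^{k}$ and (ii) a cubic piece collecting the nonlinear couplings $\sum_{k^1+k^2-k^3=k}\int_0^1 w^{k^1}w^{k^2}\overline{w^{k^3}}\,d\sigma$. The decisive property is invariance of $\mathcal{U}$ under the one-parameter gauge group $z^{k}\mapsto e^{\ii k_j\theta}z^{k}$ indexed by each $j\in\mathcal{M}$: the quadratic part is invariant because each $L^{k}$ and each $L_3^{k}(\sigma)$ commutes with scalar multiplication, and the cubic part is invariant because the coupling constraint $k^1+k^2-k^3=k$ forces the $j$-components of the phases to cancel exactly.

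Differentiating this invariance at $\theta=0$ produces a conserved current which I define to be $\tilde{\epsilon}\mathcal{J}_{\langle j\rangle}(\tilde{\tau})$. For the \emph{exact} modulation system this quantity would be constant in $\tilde{\tau}$; however, the iterates $[z_{j(k)}^{k}]^L$ satisfy \eqref{ljkqp}--\eqref{eta modula sys} only up to the defect $[d^{k}]^L$ controlled by Proposition \ref{pro: defect}. Hence $d\mathcal{J}_{\langle j\rangle}/d\tilde{\tau}$ reduces to an inner product of the defect against the iterated modulation functions. Combining the defect bounds (each component of order $h\tilde{\epsilon}^{N+3}$ in the appropriate weighted norm), the size bounds of Proposition \ref{pro: f}, and a Cauchy--Schwarz estimate in the triple norm $|||\cdot|||_s$, one obtains
\begin{equation*}
\sum_{j\in\mathcal{M}}|\omega_j|^{s}\left|\frac{d}{d\tilde{\tau}}\mathcal{J}_{\langle j\rangle}(\tilde{\tau})\right|\leq Ch\tilde{\epsilon}^{N+3}.
\end{equation*}
The factor $\max_{j}1/|\cos(\tfrac{1}{2}h\omega_j)|$ enters through the inversion of $L_2^{k}$ required to express the Noether current in terms of the primitive coefficients $z_{j(k)}^{k}$.

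For the leading-order identification $\mathcal{J}_{\langle j\rangle}=\tfrac{1}{2}|z_{j}^{\langle j\rangle}|^2+\mathcal{O}(h\tilde{\epsilon}^2)$, I would substitute the splitting $z^{k}=\tilde{\epsilon}^{[[k]]}(a^{k}+b^{k})$ from \eqref{21a1} into the Noether current. Only the term $k=\langle j\rangle$ carries weight $k_j=1$ together with $|z_{j}^{\langle j\rangle}|=\mathcal{O}(\tilde{\epsilon})$; all other indices contributing with $k_j\neq0$ satisfy $[[k]]\geq 2$ and therefore deliver $\mathcal{O}(\tilde{\epsilon}^2)$, while the operator corrections embedded in $L^{k}$ and $L_3^{k}$ supply an additional factor $h$. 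The dominant piece is thus $\tfrac{1}{2}|z_{j}^{\langle j\rangle}|^2$ with remainder $\mathcal{O}(h\tilde{\epsilon}^2)$. The main obstacle will be verifying that the variational/gauge structure survives the discrete finite-difference form of \eqref{MFE-2} together with the continuous $\sigma$-integral from the continuous-stage formulation --- specifically, showing that the pairing $\int_0^1(\cdot)\,d\sigma$ combined with $L_3^{k}(\sigma)$ intertwines cleanly with the $e^{\ii k_j\theta}$ action so no surface terms spoil Noether's identity, and confirming that the $1/\cos(\tfrac{1}{2}h\omega_j)$ factors arising from inverting $L_2^{k}$ remain bounded under the standing non-resonance assumption \eqref{non-resonance cond}.
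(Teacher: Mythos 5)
Your proposal follows essentially the same route as the paper's proof: the paper likewise obtains $\mathcal{J}_{\langle j\rangle}$ from the invariance of the (quartic) nonlinearity potential $\mathcal{U}(\www)$ under the phase rotation $w^{k}\mapsto \mathrm{e}^{\ii(k\cdot\mu)\theta}w^{k}$, substitutes the modulation equations with their defect $d^{k}$, identifies the quadratic expression $\textmd{Re}\sum_j(\langle j\rangle\cdot\mu)\overline{L_3^{\langle j\rangle}(1/2)z_j^{\langle j\rangle}}\,\tilde{L}^{\langle j\rangle}z_j^{\langle j\rangle}$ as a total derivative via the ``magic formulas'', and bounds the defect contribution by Cauchy--Schwarz using Propositions \ref{pro: f} and \ref{pro: defect}. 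The only cosmetic differences are that the paper introduces the $1/\cos(\tfrac{1}{2}h\omega_j)$ factor through the explicit choice of weight $\mu=\frac{\sinc(\frac{1}{2}h\omega_j)}{\cos(\frac{1}{2}h\omega_j)}\langle j\rangle$ (compensating the leading $\cos$ of $L_3^{\langle j\rangle}(1/2)$) and fixes $\sigma=1/2$ to dispose of the $\sigma$-integral you flagged as the main obstacle; these are implementation details rather than a different argument.
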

\begin{proof} Let $$\mathcal{U}(\www)= \sum\limits_{k^1+k^2-k^3-k^4=0
}\frac{1}{(2\pi)^d}\int_{[-\pi,\pi]^d}\int_{0}^{1}w^{k^1}w^{k^2}
\overline{w^{k^3}}\overline{w^{k^4}}d\sigma dx.$$
From the above analysis, we can write the defect formula $d^{k}$ as
\begin{equation}\label{djk pote}
\begin{aligned}
& \tilde{L}^{k} z^{k}=-\ii h
\sum\limits_{k^1+k^2-k^3=k
}\int_{0}^{1}w^{k^1}_{j(k^1)}w^{k^2}_{j(k^2)}\overline{w^{k^3}_{j(k^3)}}d\sigma+
d^{k}.
\end{aligned}
\end{equation}
Here  we use  $\tilde{L}^{k}$ to denote  the truncation of
the operator $L^{k}$ after the $\tilde{\epsilon}^N$ term. The
transformation $w^{k}\rightarrow \mathrm{e}^{\mathrm{i}(k
\cdot \mu) \theta} w^{k}$ for real sequences
$\mu = (\mu_l)_{l\geq0}$ and $\theta\in \RR$ and the
choice of $k=\langle j\rangle$ leaves $\mathcal{U}$ invariant
%(see \cite{18})
\begin{equation*}
\begin{aligned}
&0=h\frac{d}{d\theta}\mid_{\theta=0}
\mathcal{U}\Big((\mathrm{e}^{\mathrm{i}(\langle j\rangle \cdot
\mu) \theta} w^{\langle j\rangle})_{\langle j\rangle}\Big) \\
=& -4h \textmd{Re}\Big(\sum\limits_{j} \mathrm{i}(\langle j\rangle
\cdot \mu) \overline{ w_{j}^{\langle j\rangle}}
\sum\limits_{k^1+k^2-k^3=\langle j\rangle
}\int_{0}^{1}w^{k^1}_{j(k^1)}w^{k^2}_{j(k^2)}\overline{w^{k^3}_{j(k^3)}}d\sigma\Big)\\
=& 4 \textmd{Re}\Big(\sum\limits_{j} (\langle j\rangle \cdot
\mu) \overline{w_{j}^{\langle j\rangle}}\big(\tilde{L}^{\langle j\rangle} z^{\langle j\rangle}_j-d^{\langle j\rangle}_j\big)\Big)\\
=& 4 \textmd{Re}\Big(\sum\limits_{j} (\langle j\rangle \cdot
\mu) \overline{L_3^{\langle
j\rangle}(\sigma)z_{j}^{\langle j\rangle}}\big(\tilde{L}^{\langle
j\rangle} z^{\langle j\rangle}_j-d^{\langle j\rangle}_j\big)\Big).
\end{aligned}
%\label{almost 1-1}%
\end{equation*}

Since the right-hand side is independent of $\sigma$, we choose
$\sigma=1/2$ in the following analysis. With  the above formula, we
have
\begin{equation}
\begin{aligned}
 4 \textmd{Re} \sum\limits_{j} (\langle j\rangle \cdot
\mu)  \overline{L_3^{\langle
j\rangle}(1/2)z_{j}^{\langle j\rangle}} \tilde{L}^{\langle j\rangle}
z^{\langle j\rangle}_j=4 \textmd{Re} \sum\limits_{j} (\langle
j\rangle \cdot \mu) \overline{L_3^{\langle
j\rangle}(1/2)z_{j}^{\langle j\rangle}} d^{\langle j\rangle}_j.
\end{aligned}
\label{almost 1-2}%
\end{equation}
By the expansions   of $L_3^{\langle j\rangle}(1/2)$ and
$\tilde{L}^{\langle j\rangle}$ and the ``magic formulas" on p. 508
of \cite{hairer2006}, it is known that the left-hand side of
\eqref{almost 1-2} is a total derivative of function $\tilde{\epsilon}
\mathcal{J}_{\mu}(\tilde{\tau})$. Therefore \eqref{almost 1-2}
is identical to $$\tilde{\epsilon}\frac{d}{d \tilde{\tau}}
\mathcal{J}_{\mu}=4 \textmd{Re} \sum\limits_{j}
(\langle j\rangle \cdot \mu)  \overline{L_3^{\langle
j\rangle}(1/2)z_{j}^{\langle j\rangle}} d^{\langle j\rangle}_j.$$
Considering the special case of
$\mu=\frac{\sinc(\frac{1}{2}h
\omega_j)}{\cos(\frac{1}{2}h \omega_j)}\langle j\rangle$  and
for the first result, it needs to prove that
$$\sum\limits_{j\in
\mathcal{M}}\abs{\omega_j}^{s}\abs{\frac{\sinc(\frac{1}{2}h
\omega_j)}{\cos(\frac{1}{2}h \omega_j)}}
\left|\overline{L_3^{\langle j\rangle}(1/2)z_{j}^{\langle j\rangle}}
d^{\langle j\rangle}_j\right|\leq Ch\tilde{\epsilon}^{N+4}. $$

By the
property of $L_3$, we have $$\sum\limits_{j\in
\mathcal{M}}\abs{\omega_j}^{s} \abs{\frac{\sinc(\frac{1}{2}h
\omega_j)}{\cos(\frac{1}{2}h \omega_j)}}\left|\overline{L_3^{\langle
j\rangle}(1/2)z_{j}^{\langle j\rangle}} d^{\langle
j\rangle}_j\right|\leq  C\sum\limits_{j\in
\mathcal{M}}\abs{\omega_j}^{s} \abs{  z_{j}^{\langle j\rangle}}
\abs{\dot{h}^{\langle j\rangle}_j }.$$
%Here we  decompose $d_j^{k}$ into four parts:
%$\big[d_j^{k}\big]=\big[e_j^{k}+f_j^{k}+g_j^{k}+\dot{h}_j^{k}\big],$
%where $[e_j^{k}]=0$  for  $(j,
%k)\in\mathcal{L}_{\tilde{\epsilon},M,h}:=\{(j, k):j=j(k),\
%k\neq\langle j\rangle,\  (j,
%k)\notin\mathcal{R}_{\tilde{\epsilon},M,h},\ \norm{k}\leq
%K\},$
% $[f_j^{k}]=0$ for
%non-near resonant indices $(j,
%k)\in\mathcal{R}_{\tilde{\epsilon},M,h}$, $[\dot{h}_j^{k}]=0$
%for $ k\neq \langle j\rangle$, and $[g_j^{k}]=0$ for
%$ \norm{k}\leq K$. The size of $\dot{\mathbf{h}}(\tilde{\tau})$ can be estimated
%as follows. For all $l\geq 0$ and for $0\leq\tilde{\tau}=\tilde{\epsilon} t\leq
%1$, it is true that $
%  |||\dot{\mathbf{h}}(\tilde{\tau})|||_s\leq C \tilde{\epsilon}^{\frac{p+4}{2}}h,$
%where  $|||\mathbf{z}|||_s^2:=\sum\limits_{j}\abs{\omega_j}^s
%\big(\sum\limits_{k}\abs{z_j^{k}}\big)^2$.
Taking advantage of
Cauchy-Schwarz inequality, one gets
\begin{equation*}
\begin{aligned}
&\sum\limits_{j\in
\mathcal{M}}\abs{\omega_j}^{s}\abs{\frac{\sinc(\frac{1}{2}h
\omega_j)}{\cos(\frac{1}{2}h \omega_j)}}
\left|\overline{L_3^{\langle j\rangle}(1/2)z_{j}^{\langle j\rangle}}
d^{\langle j\rangle}_j\right| \\
&\leq C \sqrt{\sum\limits_{j\in
\mathcal{M}}\big(\abs{\omega_j}^{\frac{s}{2}}\big)^2 \abs{
z_{j}^{\langle j\rangle}}^2} \sqrt{\sum\limits_{j\in
\mathcal{M}}\big(\abs{\omega_j}^{\frac{s}{2}}\big)^2
\abs{\dot{h}^{\langle j\rangle}_j }^2}\\
&\leq C\sqrt{\tilde{\epsilon}^2}\sqrt{h^2\tilde{\epsilon}^{p+4}}= Ch
\tilde{\epsilon}^{\frac{p}{2}+3}= Ch \tilde{\epsilon}^{\frac{L}{2}+3},
\end{aligned}
\end{equation*}
where the results \eqref{OO} and \eqref{defects lem1} are used here.
The first statement is immediately obtained by considering $L=
2N+2$.

Then, using  the Taylor expansions of $L_3^{\langle j\rangle}(1/2)$
and $L^{\langle j\rangle}$ and the  ``magic formulas" on p. 508 of
\cite{hairer2006} gives the construction of $ \mathcal{J}_{\langle
j\rangle}$.  \hfill\end{proof}

{After obtaining the almost invariant, its relationship with the actions is derived below.}
\begin{prop} {(\textbf{The relationship between the almost invariant and the actions.})}  It is true that
$\sum\limits_{j\in  \mathcal{M}}\abs{\omega_j}^{s} \left|
\mathcal{J}_{\langle j\rangle}(\tilde{\tau})-I_j(u^n,
\overline{u^n})\right|\leq C\tilde{\epsilon}^{\frac{7}{2}},$
where  $\tilde{\tau}\leq 1$. % and the constant $C$ depends on $\max_{j\in
%\mathcal{M}}\big \{ \frac{1}{\abs{\cos(\frac{1}{2}h
%\omega_j)}}\big\}$.
\end{prop}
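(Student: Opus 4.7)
The plan is to bound $|\mathcal{J}_{\langle j\rangle}(\tilde\tau) - I_j(u^n, \bar{u}^n)|$ via the triangle inequality through the intermediate quantity $\tfrac{1}{2}|z_j^{\langle j\rangle}(\tilde\tau)|^2$:
$$\left|\mathcal{J}_{\langle j\rangle} - \tfrac{1}{2}|u_j^n|^2\right| \leq \left|\mathcal{J}_{\langle j\rangle} - \tfrac{1}{2}|z_j^{\langle j\rangle}|^2\right| + \tfrac{1}{2}\left||z_j^{\langle j\rangle}|^2 - |u_j^n|^2\right|.$$
The two pieces are controlled by separate mechanisms, and the $\tilde\epsilon^{7/2}$ rate will arise from the second one.

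First I would handle the second piece by invoking Proposition \ref{pro: small error}, which furnishes $\|u^n - \tilde{u}(t_n,\cdot)\|_s \leq C\tilde\epsilon^{N+2}$ for the modulated Fourier expansion $\tilde{u}(t_n,x)=\sum_k z_{j(k)}^{k}(\tilde\epsilon t_n)e^{\mathrm{i}(j(k)\cdot x)}e^{-\mathrm{i}(k\cdot\omega)t_n}$. Taking the $j$th Fourier coefficient and splitting off the $k=\langle j\rangle$ term, I would write $u_j^n = z_j^{\langle j\rangle}e^{-\mathrm{i}\omega_j t_n} + R_j^n + \rho_j^n$ with $R_j^n := \sum_{k\neq\langle j\rangle} z_j^{k}e^{-\mathrm{i}(k\cdot\omega)t_n}$ and $\|\rho^n\|_s \leq C\tilde\epsilon^{N+2}$. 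Expanding $|u_j^n|^2$, the dominant deviation from $|z_j^{\langle j\rangle}|^2$ is the cross-term $2\,\mathrm{Re}\bigl(\overline{z_j^{\langle j\rangle}}\,e^{\mathrm{i}\omega_j t_n}R_j^n\bigr)$. Weighting by $|\omega_j|^s$, summing over $j$, and applying Cauchy--Schwarz together with the two estimates \eqref{OO}, I obtain
$$\sum_j|\omega_j|^s\,|z_j^{\langle j\rangle}|\,|R_j^n| \leq \Bigl(\sum_j|\omega_j|^s|z_j^{\langle j\rangle}|^2\Bigr)^{1/2}\Bigl(\sum_j|\omega_j|^s\bigl(\textstyle\sum_{k\neq\langle j\rangle}|z_j^{k}|\bigr)^2\Bigr)^{1/2}\lesssim \tilde\epsilon\cdot\tilde\epsilon^{5/2}=\tilde\epsilon^{7/2}.$$
The quadratic term $|R_j^n|^2$ yields $O(\tilde\epsilon^5)$, and the mixed contributions involving $\rho_j^n$, paired with $\|u^n\|_s\lesssim\tilde\epsilon$ via Cauchy--Schwarz, are $O(\tilde\epsilon^{N+3})$; both are absorbed in the $\tilde\epsilon^{7/2}$ estimate.

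For the first piece, the preceding proposition established $\mathcal{J}_{\langle j\rangle} = \tfrac{1}{2}|z_j^{\langle j\rangle}|^2 + \mathcal{O}(h\tilde\epsilon^2)$, where the correction arises from the higher-order Taylor terms of $L_3^{\langle j\rangle}(1/2)$ and $L^{\langle j\rangle}$ rewritten via the ``magic formulas'' of \cite{hairer2006}. Every such correction carries extra slow-time derivatives of $z_j^{\langle j\rangle}$, and the derivative estimates $|||\dot{\mathbf{a}}|||_s\leq C\tilde\epsilon$, $|||\dot{\mathbf{b}}|||_s\leq C\tilde\epsilon^{1/2}$ from Proposition \ref{pro: f} supply an additional $\tilde\epsilon$-factor per derivative, so that $\sum_j|\omega_j|^s|\mathcal{J}_{\langle j\rangle} - \tfrac{1}{2}|z_j^{\langle j\rangle}|^2|$ is dominated by $\tilde\epsilon^{7/2}$.

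The main obstacle will be the careful bookkeeping of the Taylor-correction terms appearing in $\mathcal{J}_{\langle j\rangle} - \tfrac{1}{2}|z_j^{\langle j\rangle}|^2$: each such term must be traced back through the magic-formula construction to a pairing of a derivative of $z_j^{\langle j\rangle}$ with $z_j^{\langle j\rangle}$ itself, and then estimated in the $|\omega_j|^s$-weighted norm using Proposition \ref{pro: f}. The cross-term bound in the $|u_j^n|^2$ side is comparatively transparent once the splitting $u_j^n = z_j^{\langle j\rangle}e^{-\mathrm{i}\omega_j t_n} + R_j^n + \rho_j^n$ is in place, since \eqref{OO} is already sharp enough.
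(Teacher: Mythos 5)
Your argument is correct and is in substance the very proof the paper relies on: the paper's own ``proof'' is a one-line citation of Proposition 6 of \cite{18}, and your decomposition through $\tfrac12|z_j^{\langle j\rangle}|^2$ --- with the cross term $\sum_j|\omega_j|^s|z_j^{\langle j\rangle}||R_j^n|$ bounded by Cauchy--Schwarz and \eqref{OO} as $\tilde{\epsilon}\cdot\tilde{\epsilon}^{5/2}=\tilde{\epsilon}^{7/2}$, the remainder controlled by Proposition \ref{pro: small error}, and the $\mathcal{J}_{\langle j\rangle}-\tfrac12|z_j^{\langle j\rangle}|^2$ corrections absorbed via the derivative bounds of Proposition \ref{pro: f} --- is exactly that argument. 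No discrepancy with the paper's (delegated) proof.
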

\begin{proof}
This result can be obtained by following the proof of  Proposition 6
given in \cite{18}.
 \hfill\hfill\end{proof}

 \subsubsection{Near-conservation of  actions, density and momentum}\label{subsec:momentum}
 According to the analysis stated above, we {consider}  the
interface between the modulated Fourier expansions and {extend}  it
from short to long time intervals  in the same way used in Sects.
4.10-4.11 of \cite{12}. Then the  near conservation  of actions
given in Theorem \ref{main theo} is obtained. Meanwhile,  it follows
from the results presented in Sect. 6.4 of \cite{19} and Sect. 4.11
of \cite{12} that the long-time near-conservation of actions implies
the long-time near-conservation of   density and of momentum.
Therefore, the other statements of Theorem \ref{main theo} are
proved.

This concludes the proof of  Theorem \ref{main theo} for the
integrator EP1.

\subsubsection{Proof for  EP2}\label{subsec:modification}
Consider the  one-point
  quadrature formula with $(\tilde{c}_1,\tilde{d}_1)$ and then the scheme of \eqref{cs ei-spe} becomes
%\begin{equation*}
%\left\{\begin{aligned}
%u^{n+\tilde{c}_1}&=C_{\tilde{c}_1}(V)u^n+ h \tilde{d}_1 A_{\tilde{c}_1,\tilde{c}_1}(V)f(u^{n+\tilde{c}_1}),\\
%u^{n+1}&=e^{V}u^n+h \tilde{d}_1A_{1,\tilde{c}_1}(V)  f(u^{n+\tilde{c}_1}),\\
%\end{aligned}\right.
%\end{equation*}
%which yields that
\begin{equation}
 \begin{aligned}
u^{n+1}&=e^{V}u^n+h \tilde{d}_1A_{1,\tilde{c}_1}(V)  f\Big(C_{\tilde{c}_1}(V)u^n+A_{\tilde{c}_1,\tilde{c}_1}(V) A^{-1}_{1,\tilde{c}_1}(V)(u^{n+1}-e^{V}u^n) \Big).
\end{aligned} \label{cs ei-spe-new}%
\end{equation}
 In terms of  this formula, we can derive the modulation equations
for  the modulation functions $z_j^{k}$ as $L^{k}
z_j^{k}(\tilde{\epsilon} t)= -\ii h
\sum\limits_{k^1+k^2-k^3=k } z^{k^1}_{j(k^1)}(\tilde{\epsilon}
t)z^{k^2}_{j(k^2)}(\tilde{\epsilon}
t)\overline{z^{k^3}_{j(k^3)}}(\tilde{\epsilon} t)$
 by defining
\begin{equation*}%\label{LLL}
\begin{aligned}L^{k}:&=\big(A_{\tilde{c}_1,\tilde{c}_1} A^{-1}_{1,\tilde{c}_1}(\mathrm{e}^{-\mathrm{i}(k \cdot \omega)
h}\mathrm{e}^{\tilde{\epsilon} hD}- \mathrm{e}^{\ii
h\Omega})+C_{\tilde{c}_1}\big)^{-1}(\mathrm{e}^{-\mathrm{i}(k
\cdot \omega) h}\mathrm{e}^{\tilde{\epsilon} hD}-
\mathrm{e}^{\ii h\Omega})
(\tilde{d}_1B_{\tilde{c}_1})^{-1}.\end{aligned}
\end{equation*} It can be seen that this formula has more concise expression than that of EP1.
 Then by
modifying the nonlinearity and concerning the property of
$L^{k}$, the analysis given above can be changed accordingly for  EP2.

\begin{rem}It is noted that the scheme \eqref{cs ei-spe-new} has been analysed in
\cite{12}. Under an assumption  on the coefficient functions of exponential integrator,
long term conservations have been derived there. However, for the coefficients $A_{\tilde{c}_1,\tilde{c}_1}(V), A_{1,\tilde{c}_1}(V)$  of EP2,
they do not satisfy that assumption required  in \cite{12}. Thus the part 4.2 of the proof given in \cite{12} cannot be used for EP2. Therefor we consider the above approach to proving the result. On the other side, the operator $L^{k}$ determined by EP3 does not have similar property as \eqref{L expansion}. Therefore, there is no invariant  of the
modulation system  and the near conservations are not true for EP3.
\end{rem}

\section{Numerical experiment}\label{sec:num exp}
\renewcommand\arraystretch{1.0}
\begin{table}[t!]$$
\begin{array}{|c|c|c|c|c|c|}
\hline%%%%%%%%%%%%%%%%%%%%%%%%%%%%%%%%%%%  parts of (f(y))^{(4)}
\text{Methods} &\text{Energy conservation} &\text{Optimal convergence}   & \text{Near conservations}      \\
\hline%%%%%%%%%%%%%%%%%%%%%%%%%%%%%%%%%%%
%%%%%%%%%%%%%%%%%%%%%%%%%%%%%%%%%%%%%%%%%%%%%%%%%%%%%%%%%%%%%%%%
 \text{EP1} & \surd    &\times\  (h^2)                 &    \surd \cr%%%%%%%%%%%%
  \text{EP2} & \surd & \surd \ (\varepsilon h^2)    & \surd \cr%%%%%%%%%%%%
 \text{EP3} & \surd  &\surd\  (\varepsilon h^3)     &  \times \cr%%%%%%%%%%%%
 \hline
\end{array}
$$
\caption{Properties of the methods.} \label{praERKN}
\end{table}
For the algorithms presented in this paper, their properties are
summarized in Table \ref{praERKN}. In order to show their advantages,  we choose the second-order
explicit exponential integrator  which is termed  pseudo
steady-state approximation which was given in
\cite{Verwer94} (denoted by EEI) and the fourth-order explicit exponential Runge--Kutta method which was given in
  \cite{Hochbruck05} (denoted by IEI4).
As a numerical experiment, we consider the problem with $d=1$ and
$\lambda=-2$ and the pseudospectral method with 64
points. In the practical computations, we apply  the three-point Gauss-Legendre's
rule to the integral in \eqref{cs ei} and use a fixed-point
iteration  with the error tolerance $10^{-16}$ and  the maximum
number $100$  for each iteration. In order to show the obtained methods behave well for different initial and boundary conditions, we will use various conditions in the experiment.

\textbf{Energy conservation.}
The initial value is given by $u^0(x)= 0.5\ii + 0.025
\cos(\mu x)$ and the periodic boundary condition is $u(t,0)=u(t,L)$. We
consider $L =4\sqrt{2}\pi$   and  integrate this problem  on $[0, 100]$  with $h=1/100$ for different $\varepsilon$. The conservation of discretised energy is shown in
Figures \ref{fig1}. From these results, it can be seen clearly that the EP integrators EP1-EP3 preserve the energy with a very good
accuracy, which supports the results of Theorem \ref{thm:ep}.
  \begin{figure}[h]
\centering
\includegraphics[width=6.5cm,height=5.0cm]{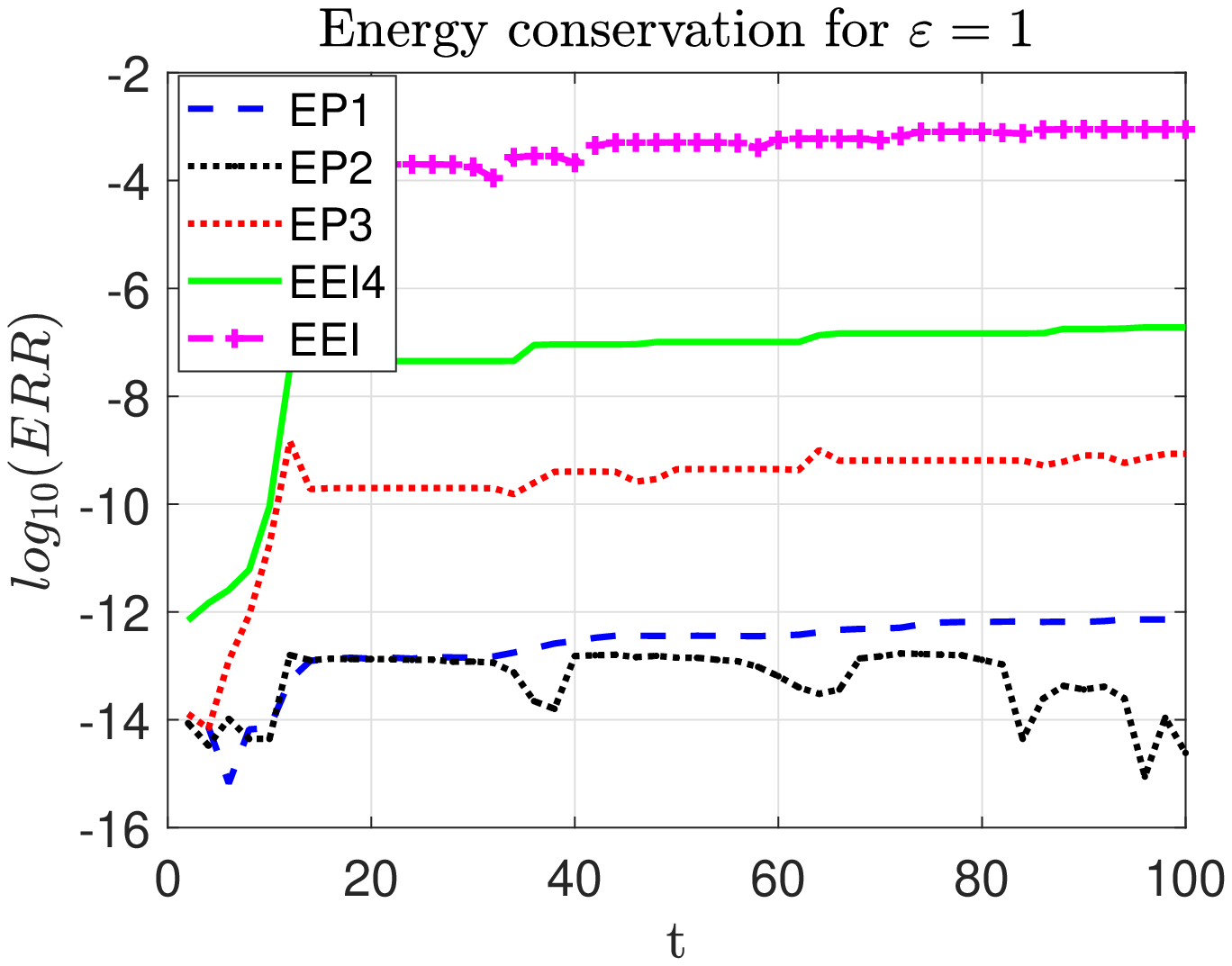}
\includegraphics[width=6.5cm,height=5.0cm]{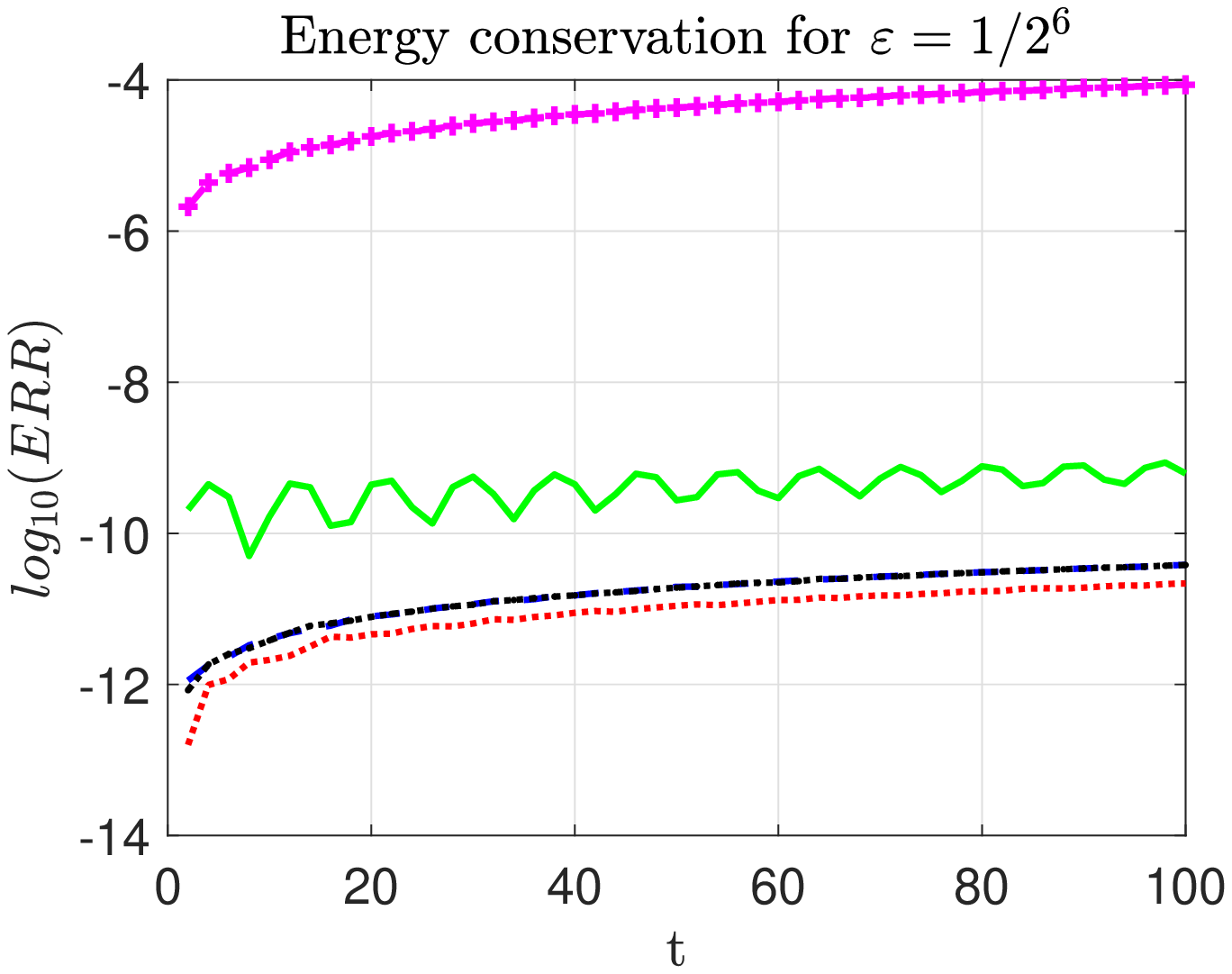}
\includegraphics[width=6.5cm,height=5.0cm]{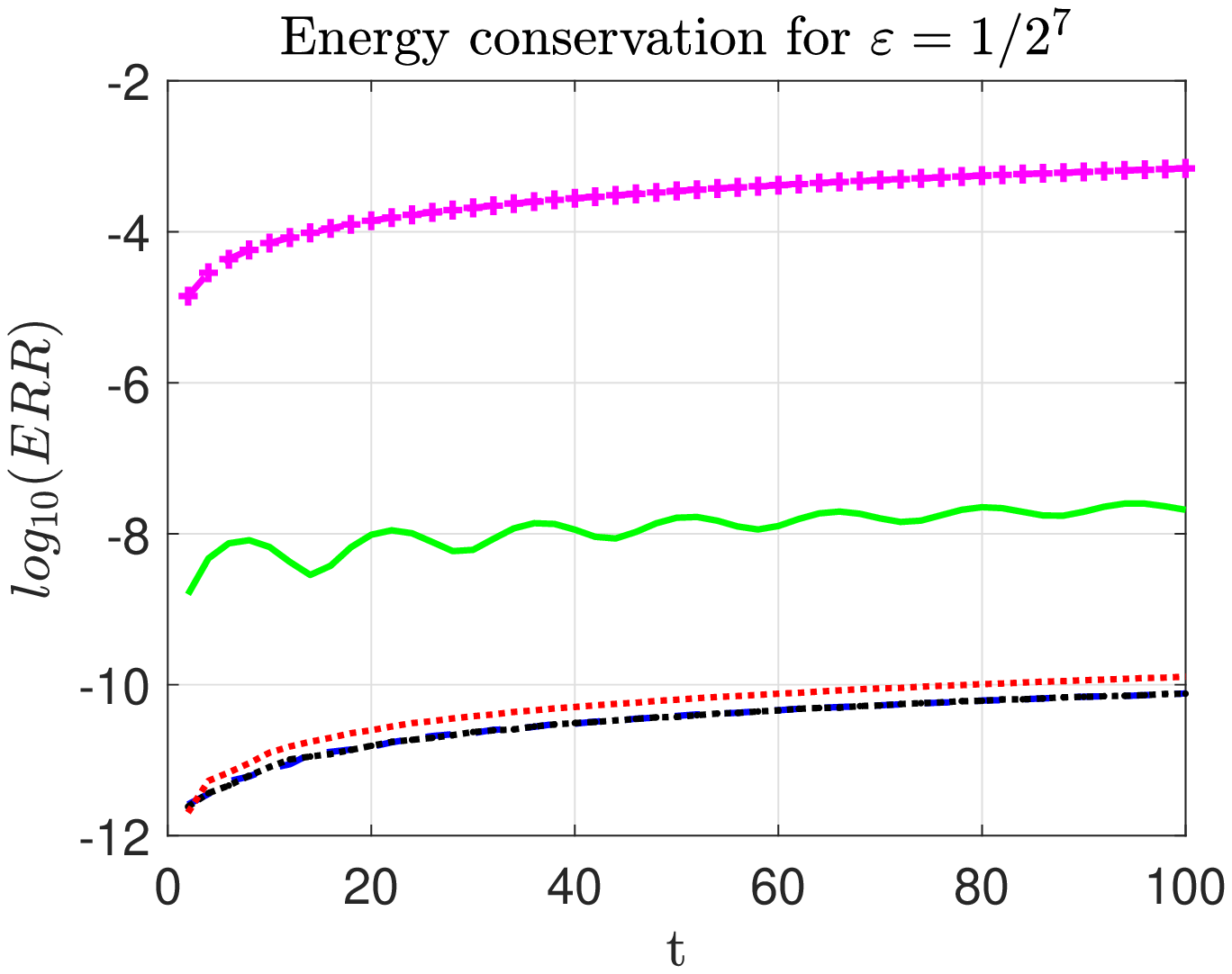}
\includegraphics[width=6.5cm,height=5.0cm]{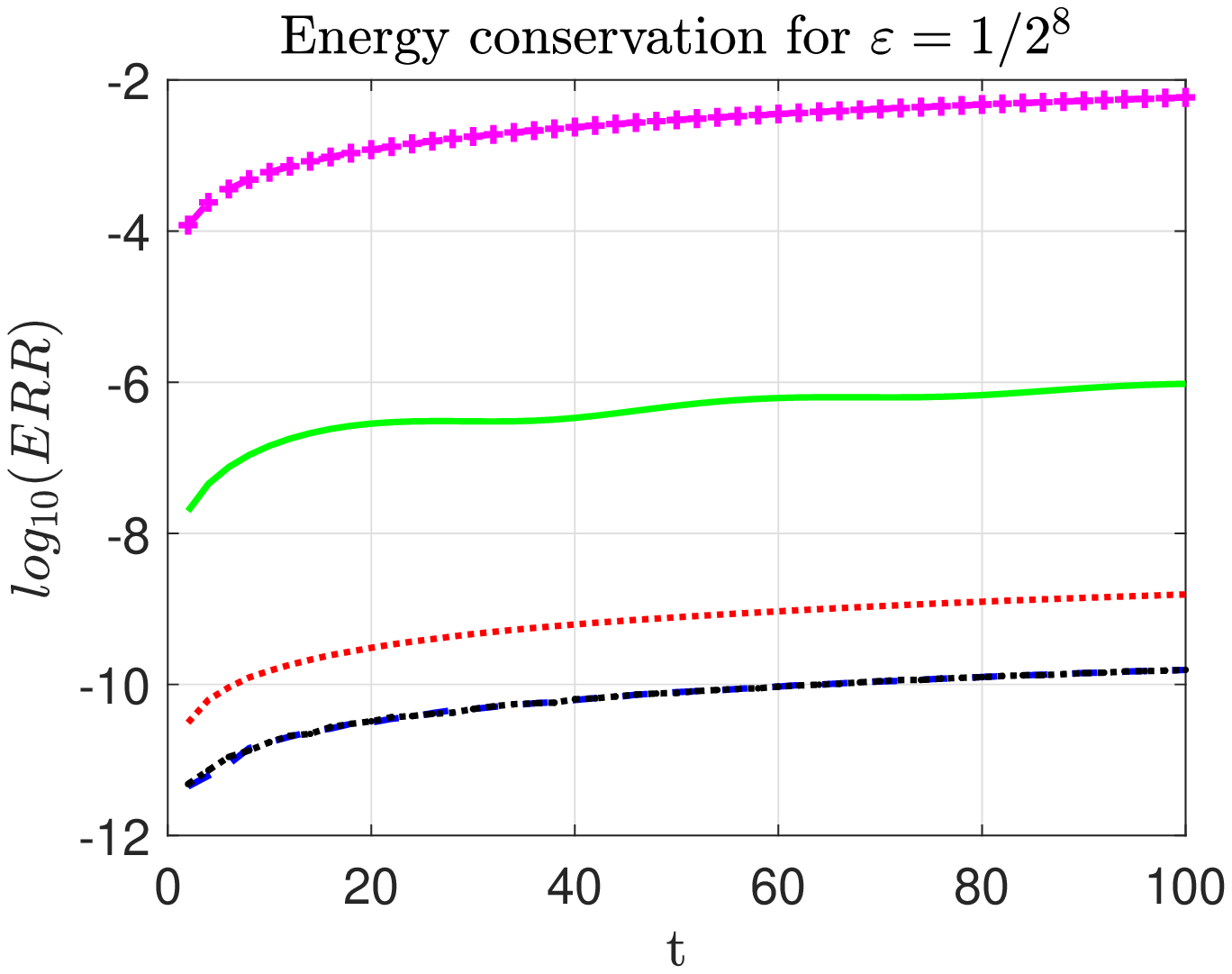}
\caption{The relative error of  discrete  energy (EER)  against $t$.} \label{fig1}
\end{figure}

\textbf{Convergence.} Following  \cite{Chartier16}, $u^0(x)$ is chosen as $u^0(x)=\cos(x)+\sin(x)$ and the boundary condition is $u(t,0)=u(t,2\pi)$.
The long term NSE \eqref{lon sch system} is solved in $[0,T/\varepsilon]$ with $T=1$ and
 $h= 1/2^{i}$ for  $i=1,\ldots,6$. The global errors of our methods measured in $L^2$ and $H^1$ for different $\varepsilon$ are presented in Figure \ref{fig0}.  For comparison, the errors of EEI are also displayed in  Figure \ref{fig0}.
It follows that EP1 only has the global error $\mathcal{O}(h^2)$ while EP2 has the error bound $\mathcal{O}(\varepsilon h^2)$ and EP3 shows $\mathcal{O}(\varepsilon h^3)$. This agrees with the results of Theorem  \ref{thm ful convergence}.  It seems here that EP3 has a better convergence than $\mathcal{O}(\varepsilon h^3)$. But after presenting the errors for   $\varepsilon=1$ in  Figure \ref{fig0-0}, it can be observed that EP3 still shows a third-order convergence.

\textbf{Near-conservations in other aspects.}
In order to show the near-conservations in other aspects, small initial value is required. Following  \cite{12,19}, we change the initial value into $u^0(x)=0.1\big(\frac{x}{\pi}-1\big)^3\big(\frac{x}{\pi}+1\big)^2+\ii
\times0.1\big(\frac{x}{\pi}-1\big)^3\big(\frac{x}{\pi}+1\big)^3$ and
consider the periodic boundary condition $u(t,-\pi)=u(t,\pi)$.
The problem is solved on $[0,10000]$ with $h=\frac{1}{100}$ and the relative errors of density and momentum
are shown in Figures \ref{fig21}-\ref{fig31}, respectively
%%%%%%%%%%%%%%%%%%%%%%%%%%%%%%%%%%%%%%%%%%%%%%%%%%%%%%%%%%%%%%%%%%%%%%%%
\footnote{The methods show similar conservation of actions and we omit the corresponding numerical results for brevity.}
%%%%%%%%%%%%%%%%%%%%%%%%%%%%%%%%%%%%%%%%%%%%%%%%%%%%%%%%%%%%%%%%%%%%%%%%
.  It can
be observed clearly from these results that the density and momentum
  are conserved  well by EP1-EP2 but not by EP3 over long terms, which supports the results stated in Theorem
\ref{main theo}. %Meanwhile, the integrators EEI and IEI4 perform badly but
%IEI has good long term conservations, which agrees with the analysis
%given in \cite{12}. The integrator DP-F has the same performance as IEI.
%It is also noted that the accuracy of these
%conservations for EP1 and EP2-F is lower than that for IEI. The
%reason might be that the constant $C$ before  $
%\tilde{\epsilon}^{\frac{3}{2}}$ in Theorem \ref{main theo} depends on
%$\max_{j}\big \{ \frac{1}{\abs{\cos(\frac{1}{2}h \omega_j)}}\big\}$
%while IEI does not have this restriction (see \cite{12}).

%
%Finally we take advantage of one-point Gauss quadrature formula
%for our integrators and see Figure \ref{fig51} for the results. From
%these results, it can be seen that our methods still have the long term conservation which supports the results given in Theorem \ref{main theo}.

Based on the numerical results, we can draw the following observations.

1) The energy-preserving methods EP1-EP3 preserve the energy with a very good accuracy for both regimes of $\varepsilon$, which is much better than the existed exponential integrators EEI and EEI4 (see Figure \ref{fig1}).  %We remark here that our fourth stiff order  algorithm  FS-F performs much
% better in the energy conservation than expected, which will be studied further in future.

2) For the highly oscillatory regime, the  integrators EP2-EP3 show improved error bounds while EP1 and EEI do not have the optimal convergence (see Figure \ref{fig0}). For the regime $\varepsilon=1$, EP1-EP3 show the normal global errors (see Figure \ref{fig0-0}).

3) The  integrators EP1-EP2 have  the long term near conservations in the density, momentum
and action  but the methods EP3, EEI and EEI4 do not show such long time behaviour (see Figures \ref{fig21}-\ref{fig31}).

\begin{figure}[ptb]
\centering
\includegraphics[width=6.5cm,height=5.0cm]{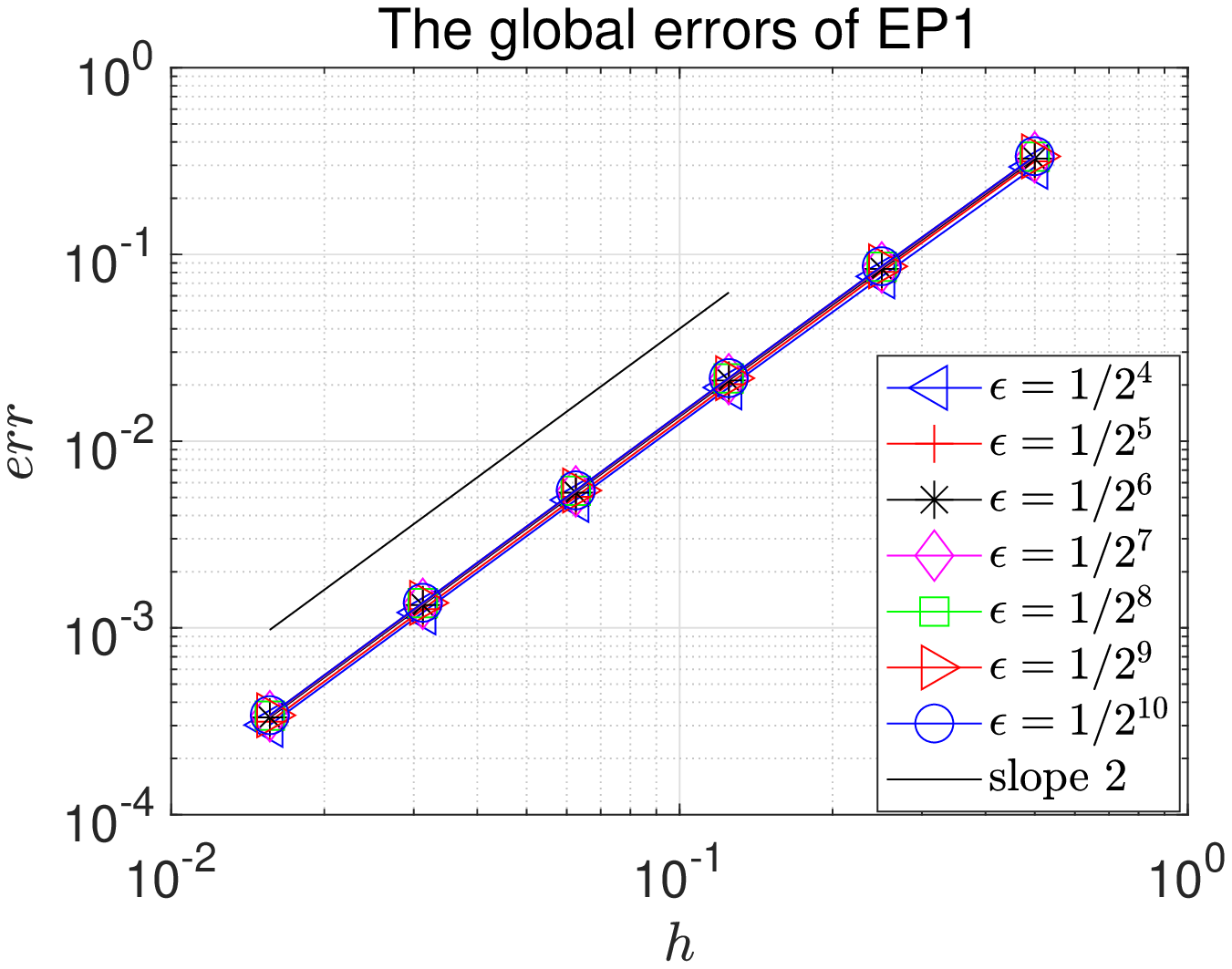}
\includegraphics[width=6.5cm,height=5.0cm]{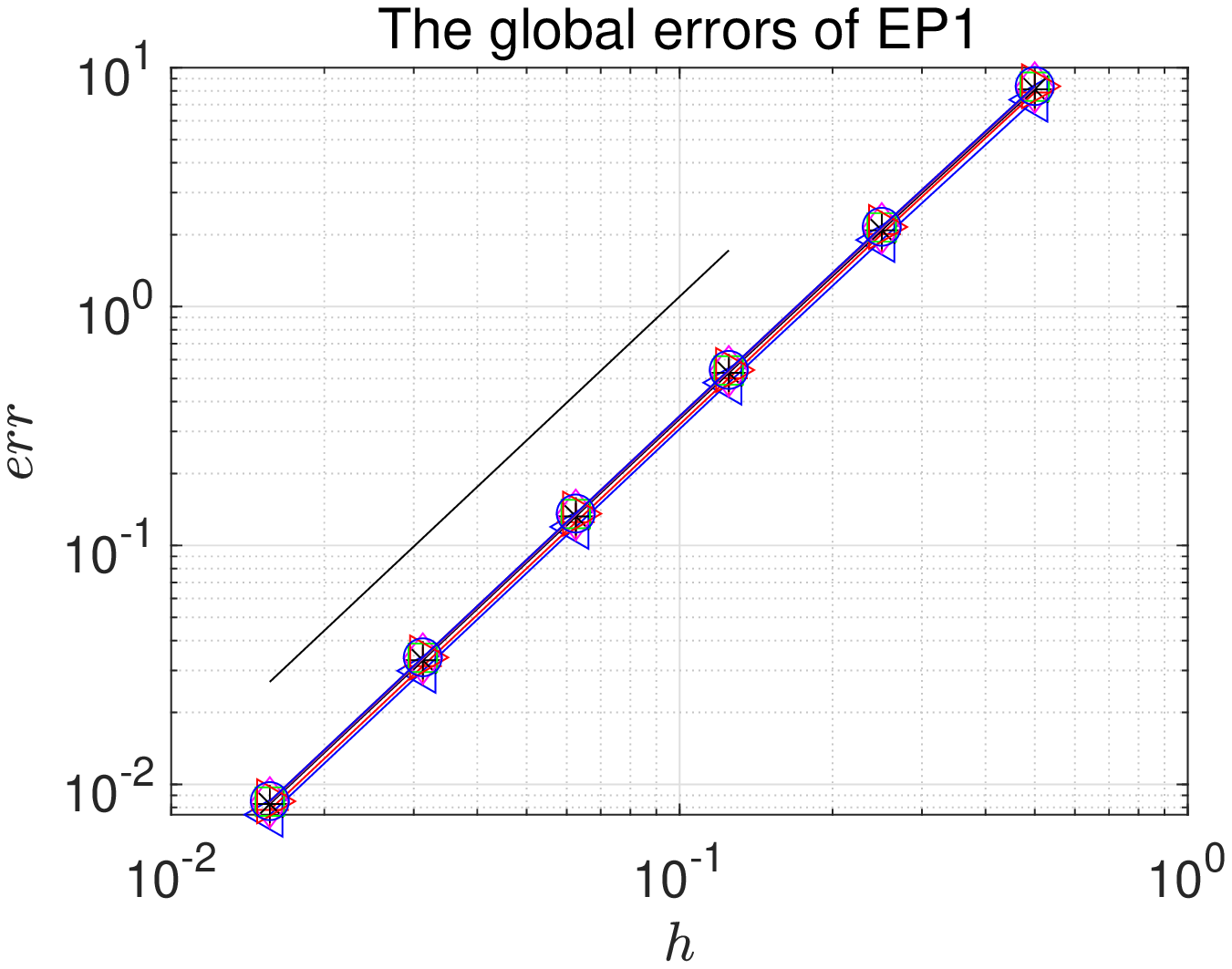}
\includegraphics[width=6.5cm,height=5.0cm]{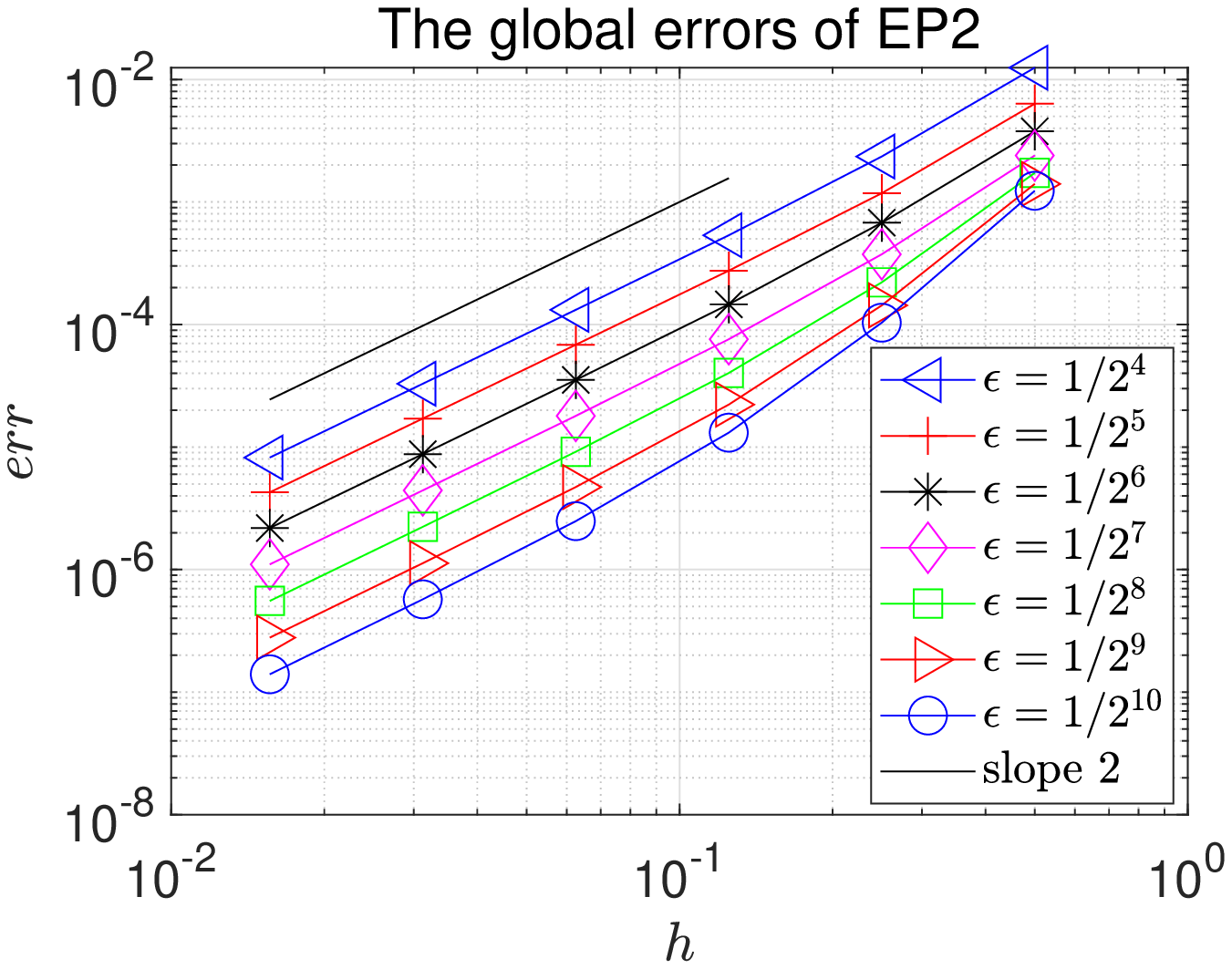}
\includegraphics[width=6.5cm,height=5.0cm]{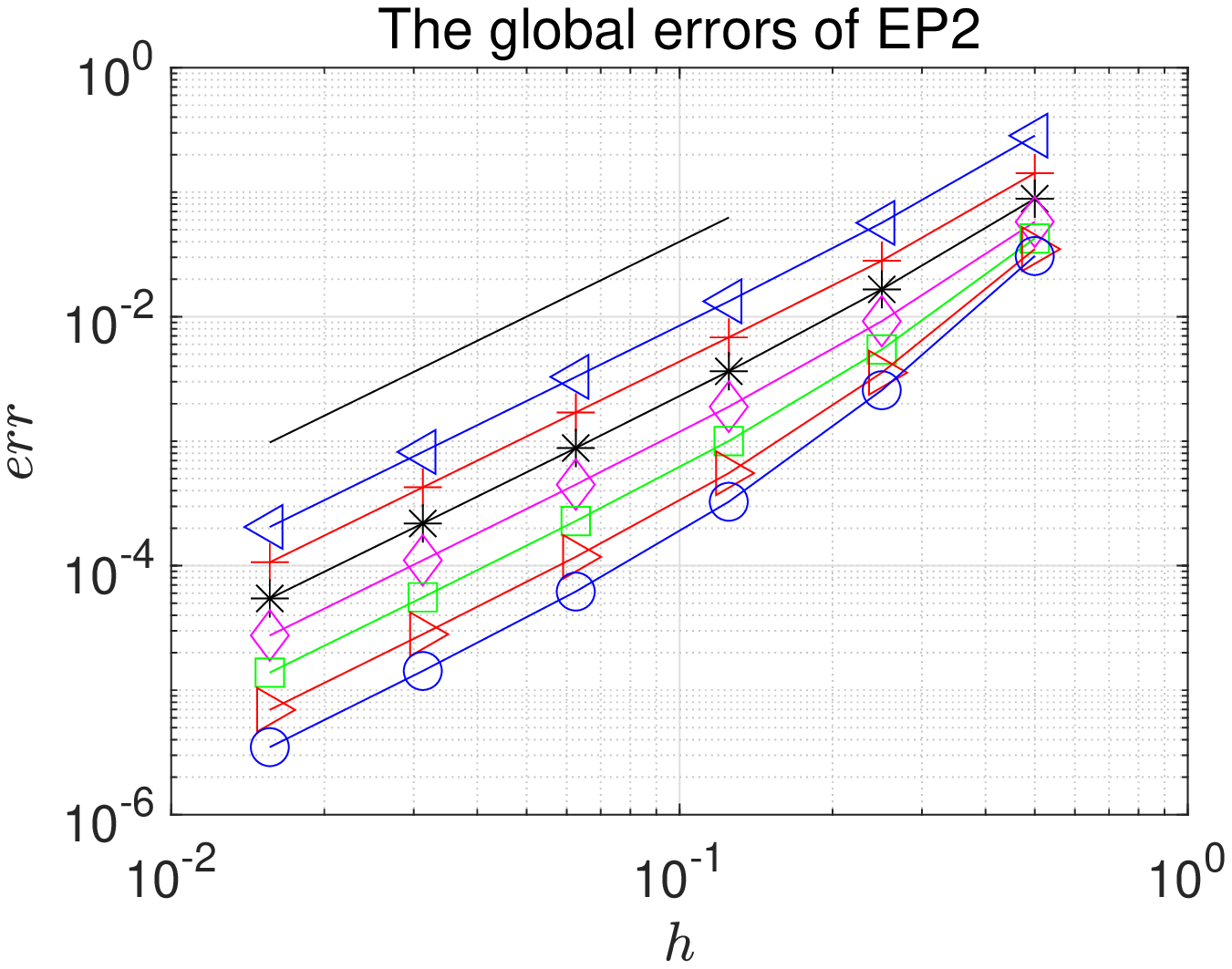}
\includegraphics[width=6.5cm,height=5.0cm]{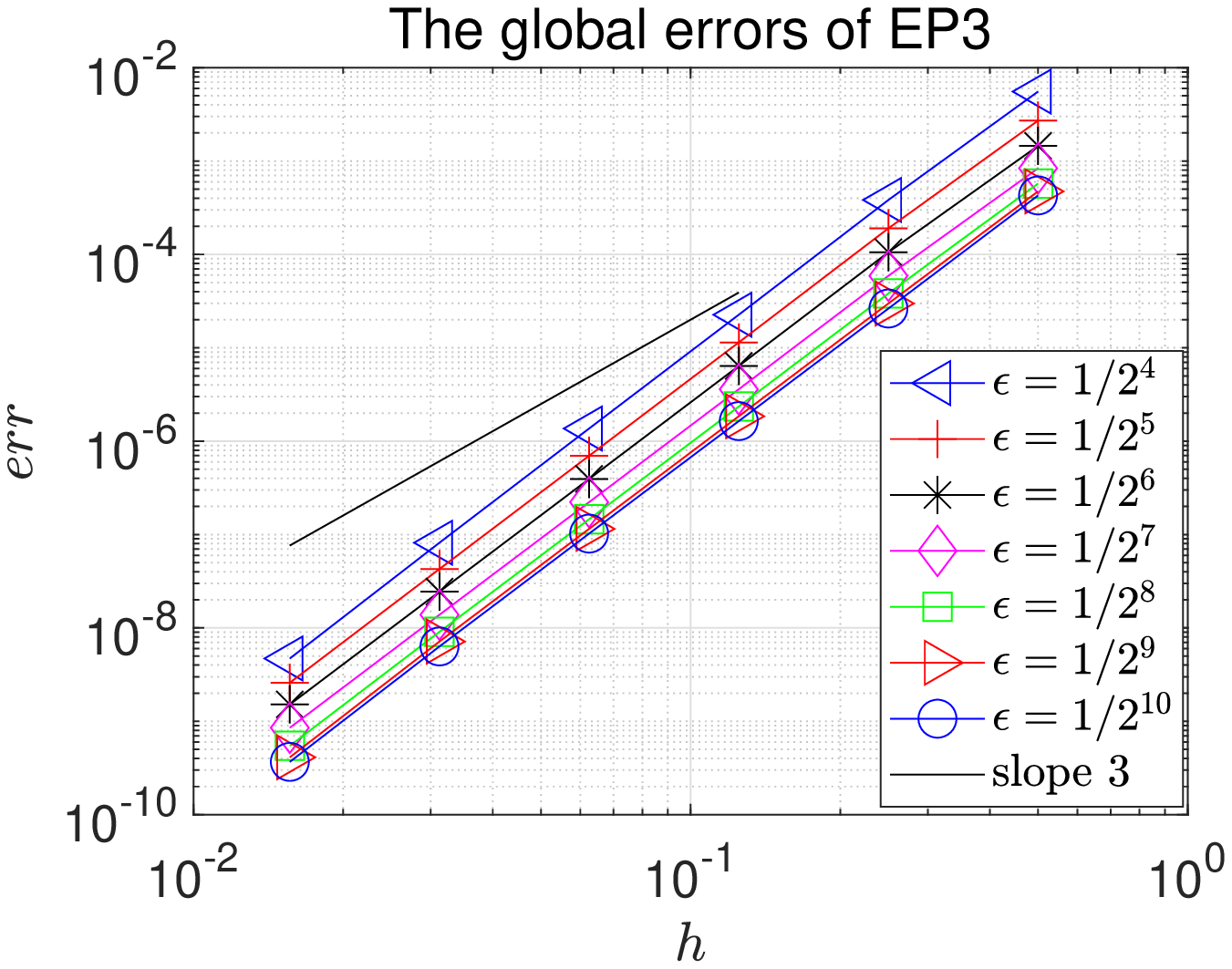}
\includegraphics[width=6.5cm,height=5.0cm]{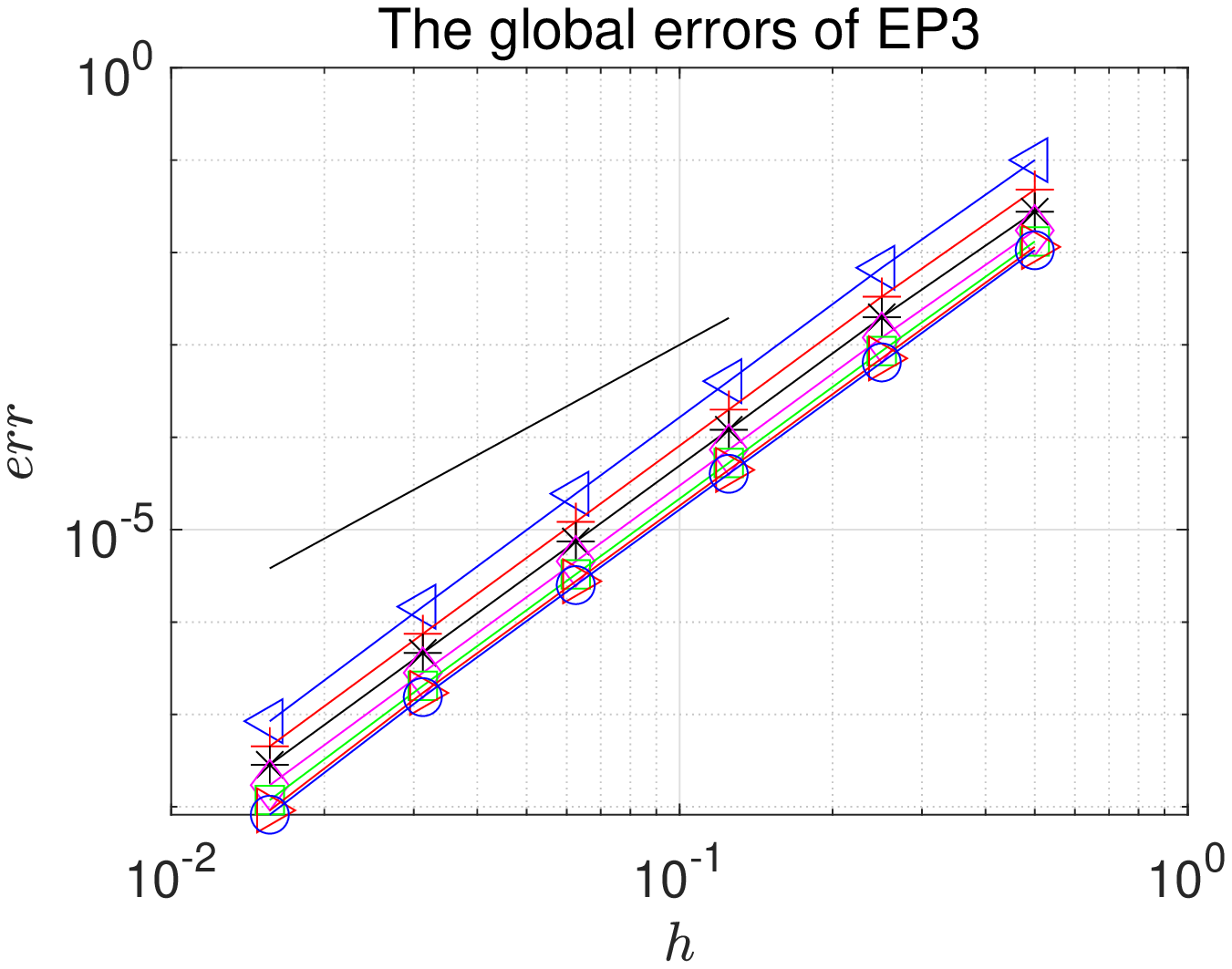}
\includegraphics[width=6.5cm,height=5.0cm]{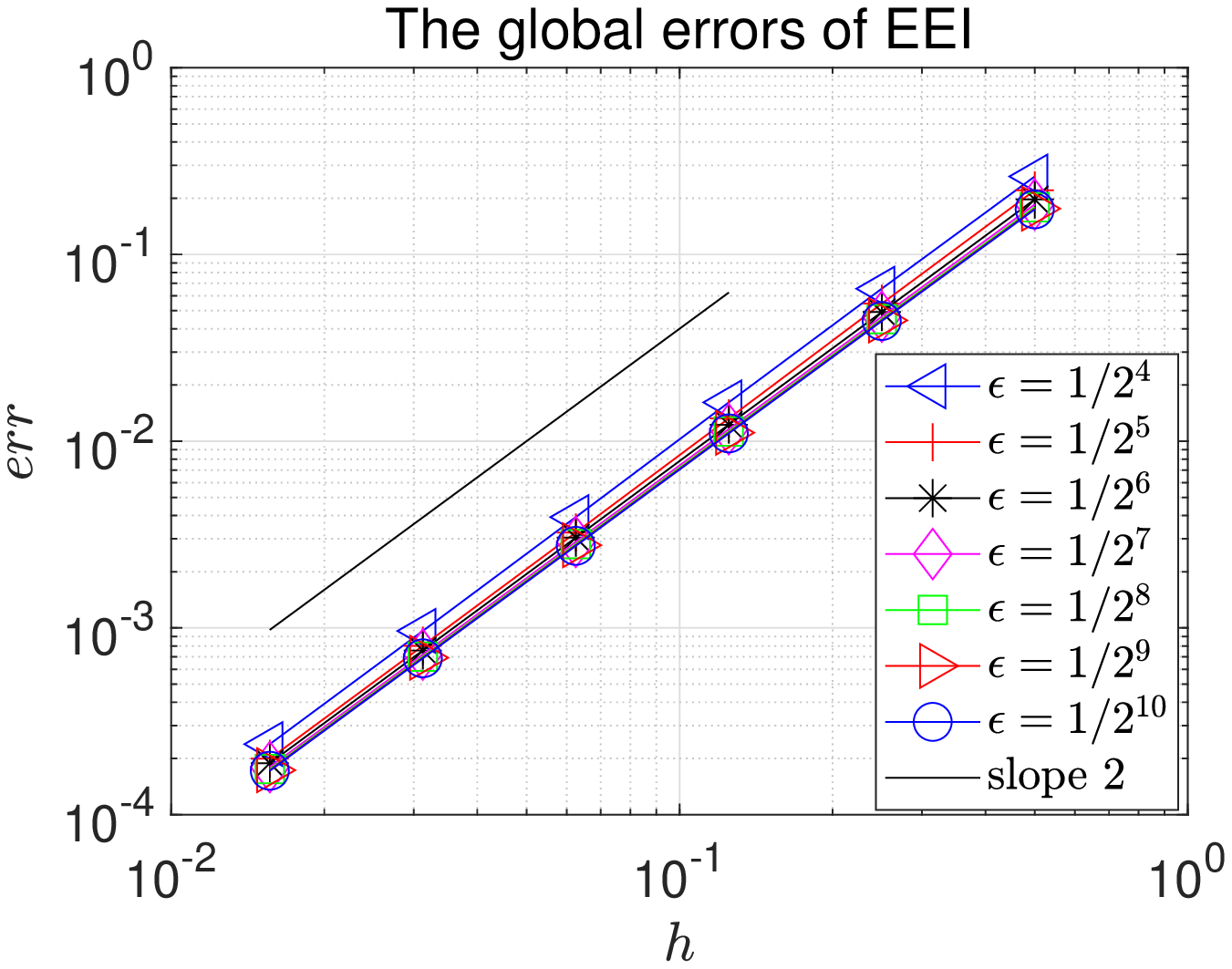}
\includegraphics[width=6.5cm,height=5.0cm]{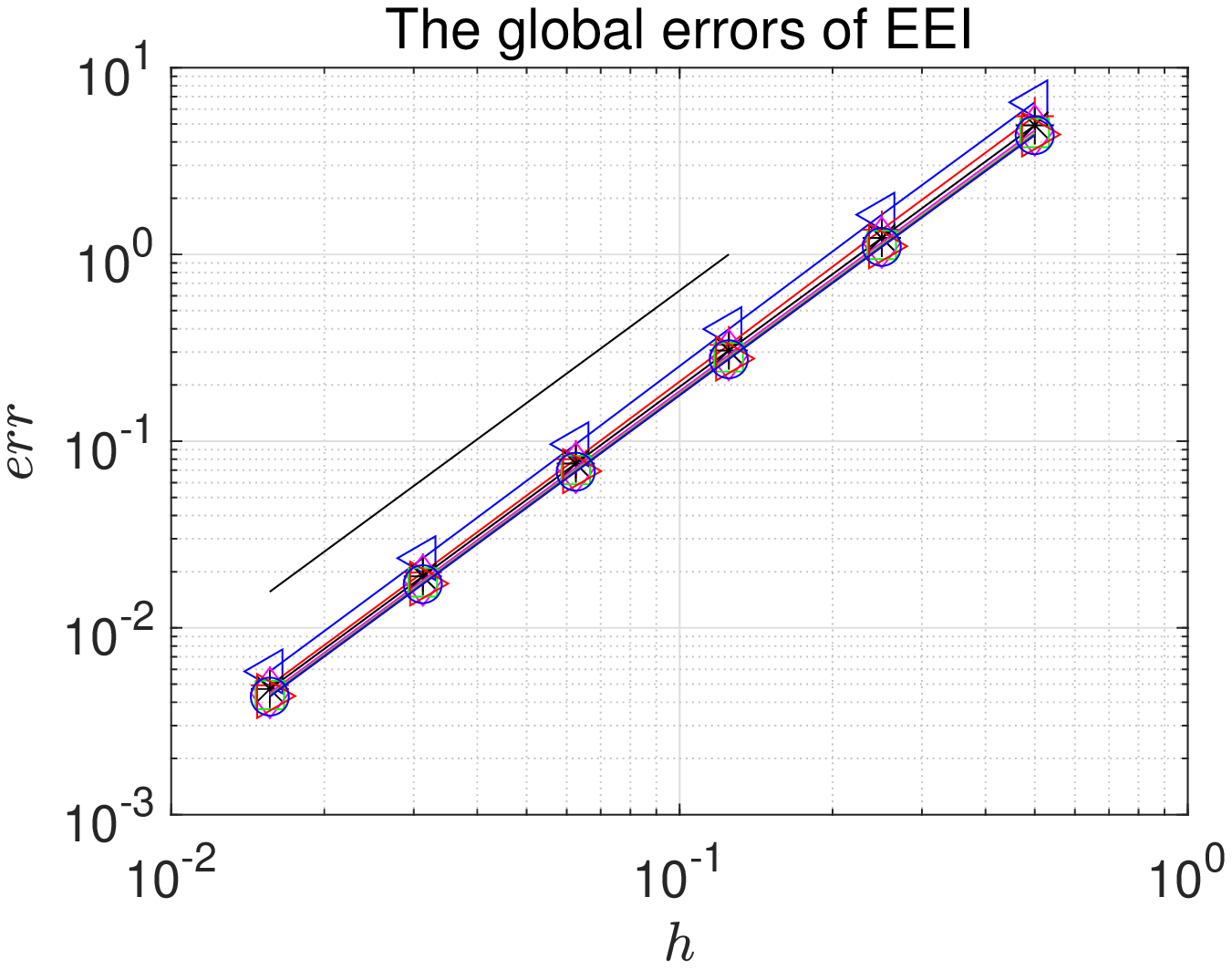}
\caption{The  global error (err) measured in $L^2$ (left) and $H^1$ (right) against the stepsize. } \label{fig0}
\end{figure}
\begin{figure}[ptb]
\centering
\includegraphics[width=6.5cm,height=5.0cm]{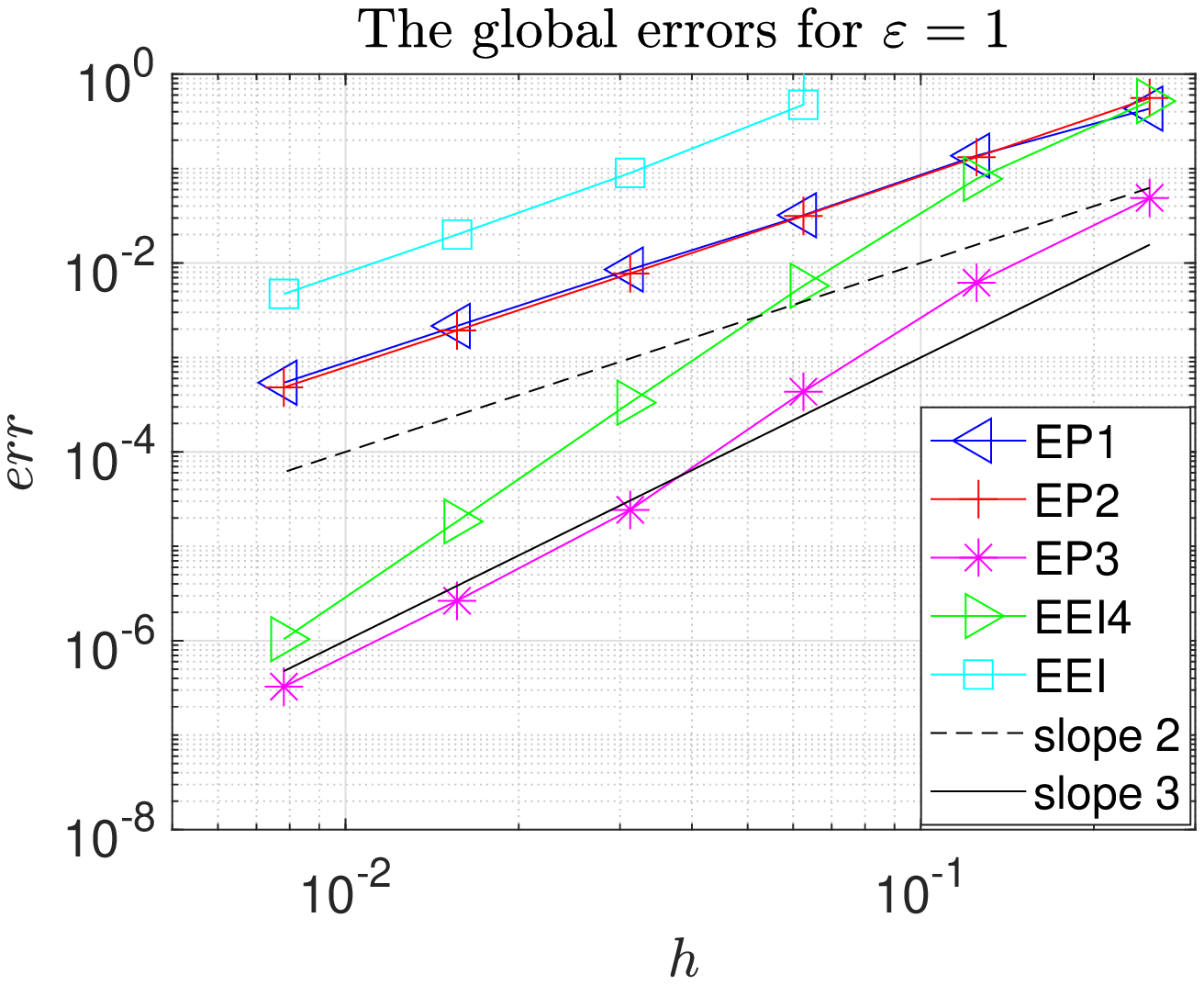}
\includegraphics[width=6.5cm,height=5.0cm]{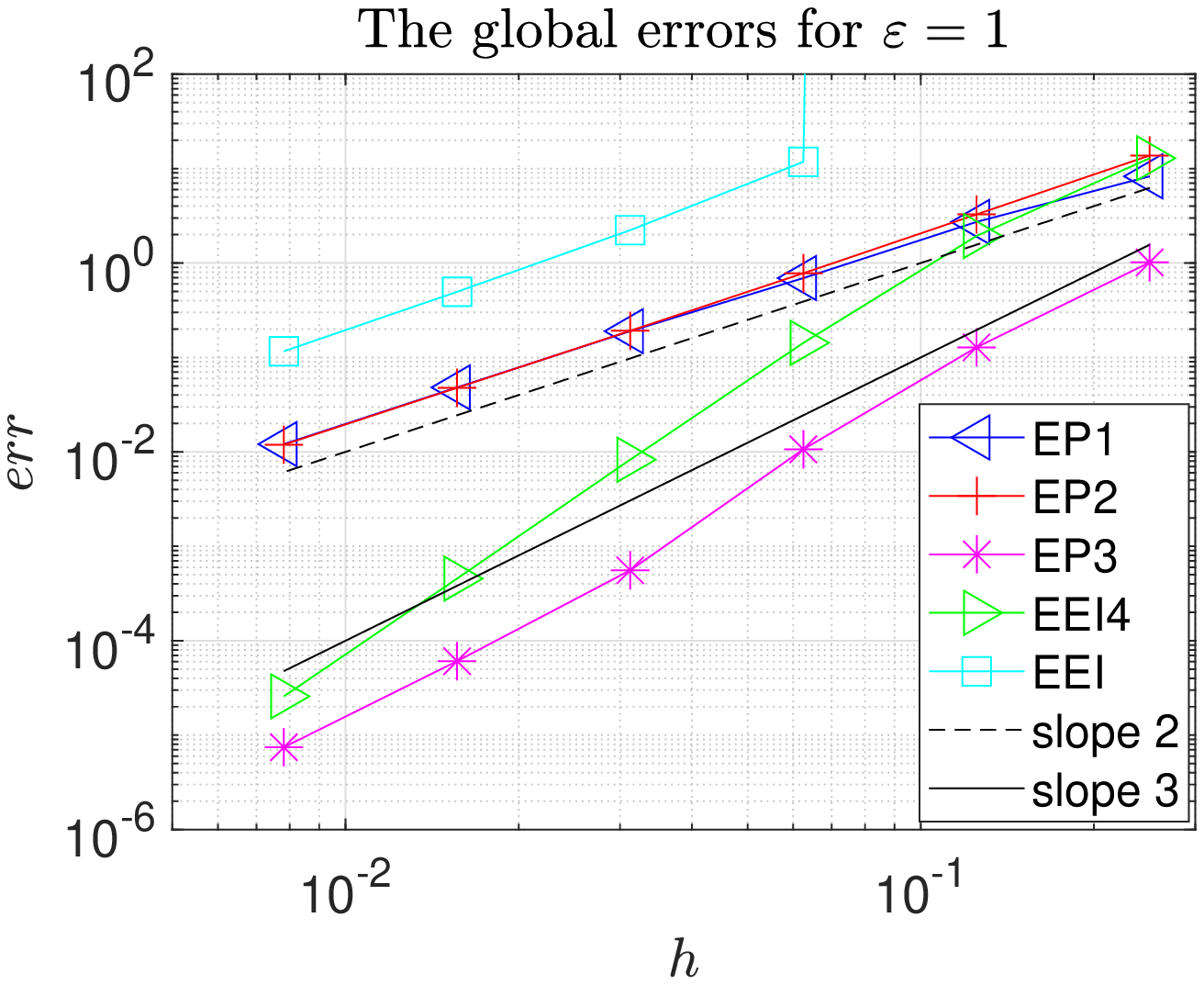}
\caption{The  global error (err) measured in $L^2$ (left) and $H^1$ (right) against the stepsize. } \label{fig0-0}
\end{figure}\begin{figure}[ptb]
\centering
\includegraphics[width=6.5cm,height=5.0cm]{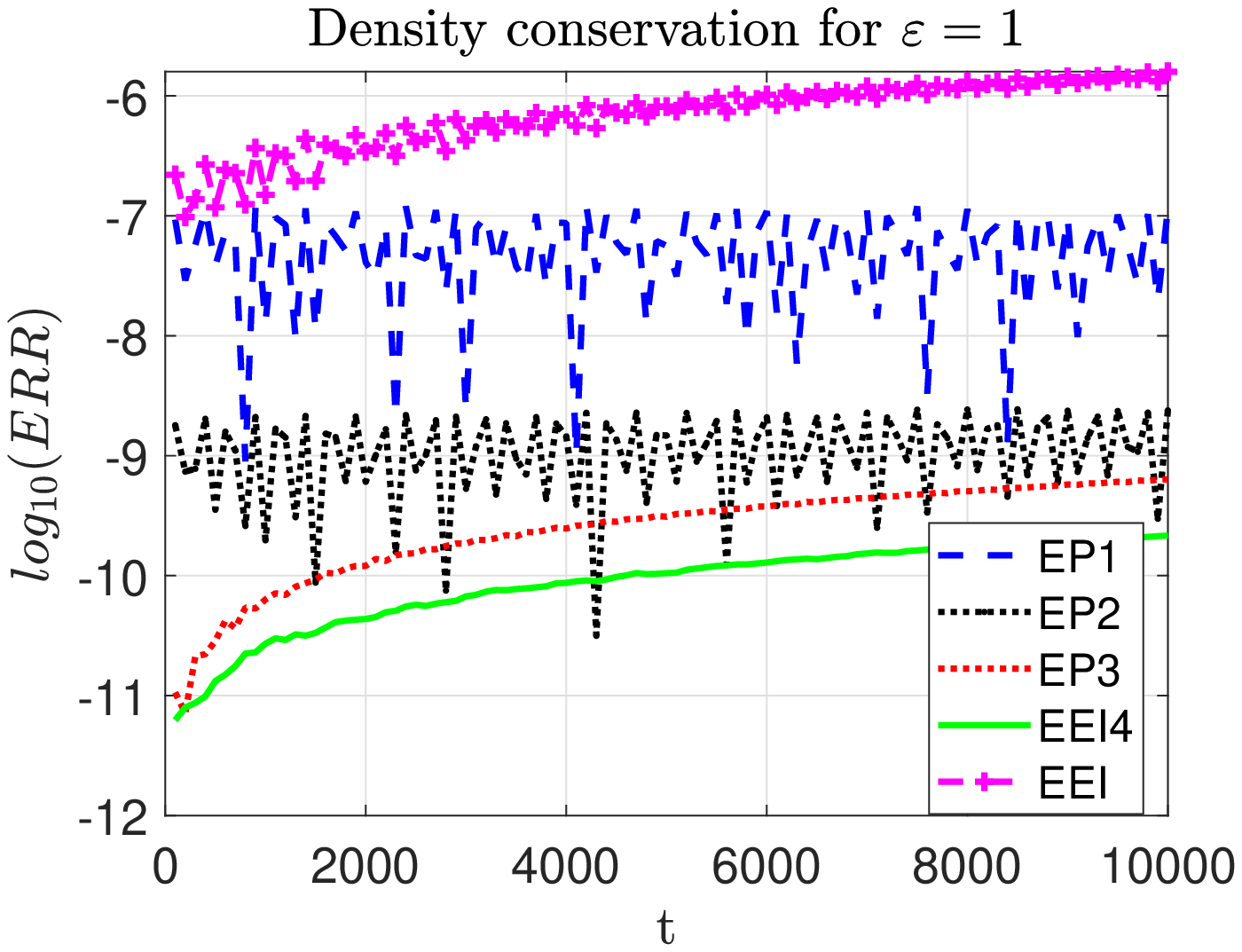}
\includegraphics[width=6.5cm,height=5.0cm]{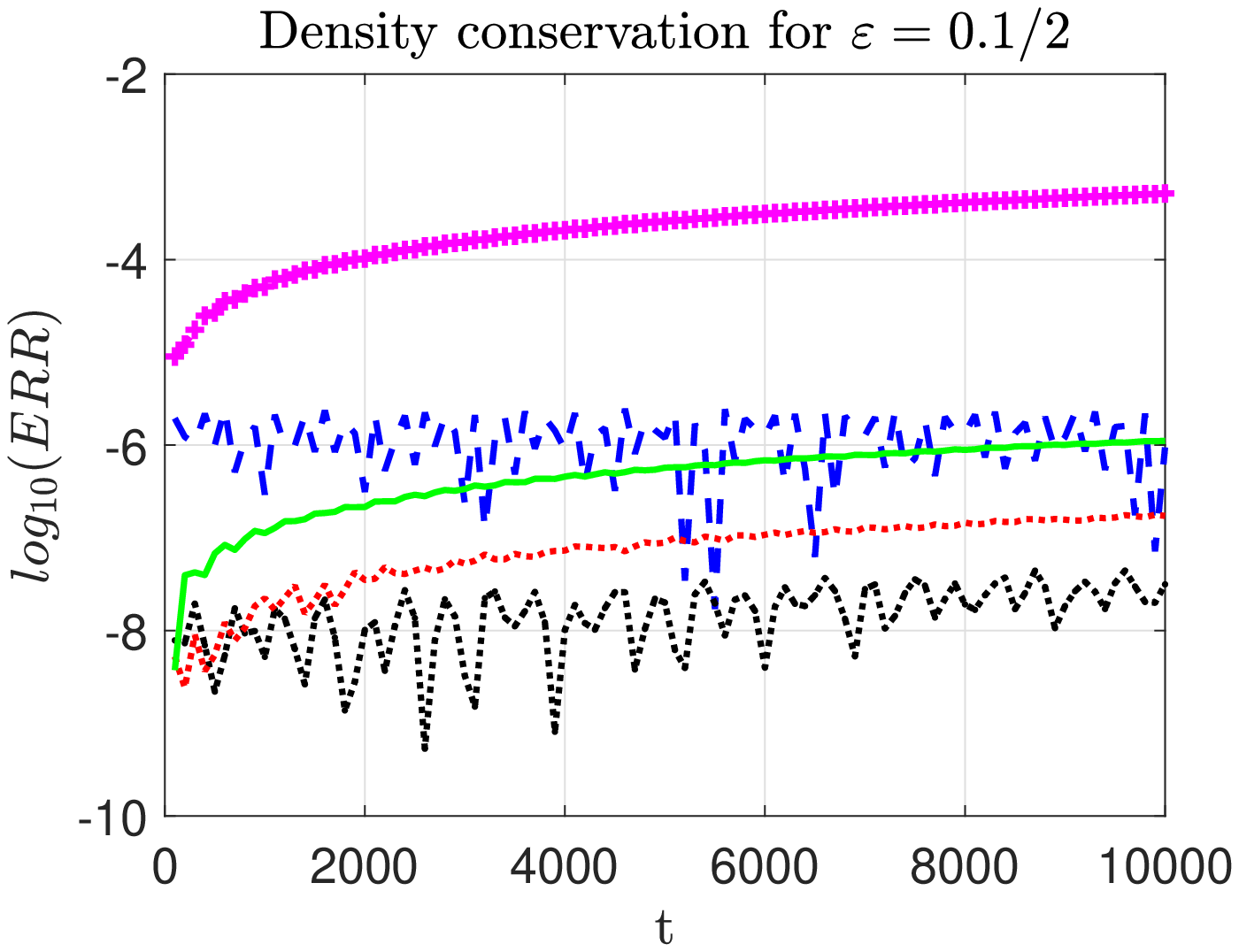}
\includegraphics[width=6.5cm,height=5.0cm]{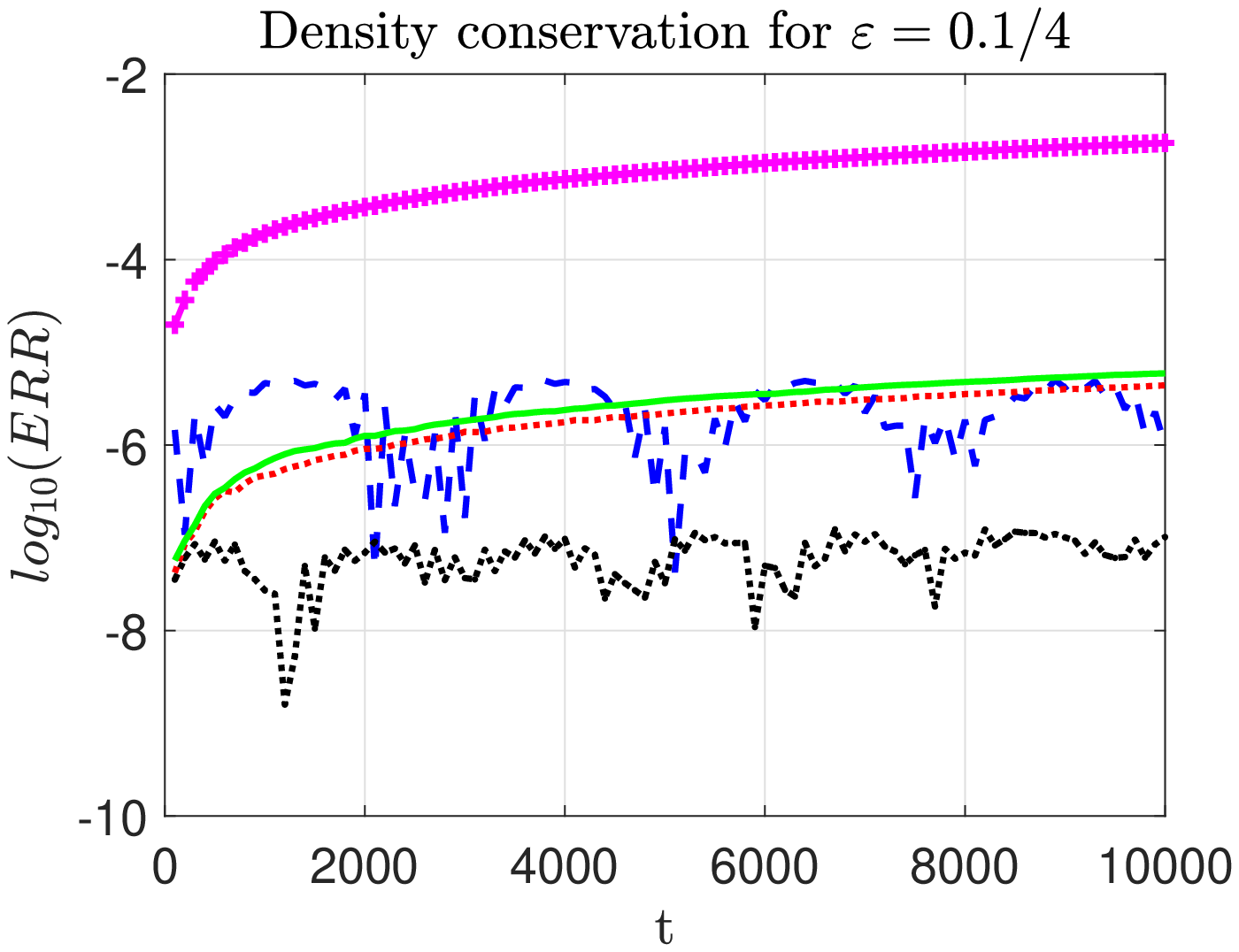}
\includegraphics[width=6.5cm,height=5.0cm]{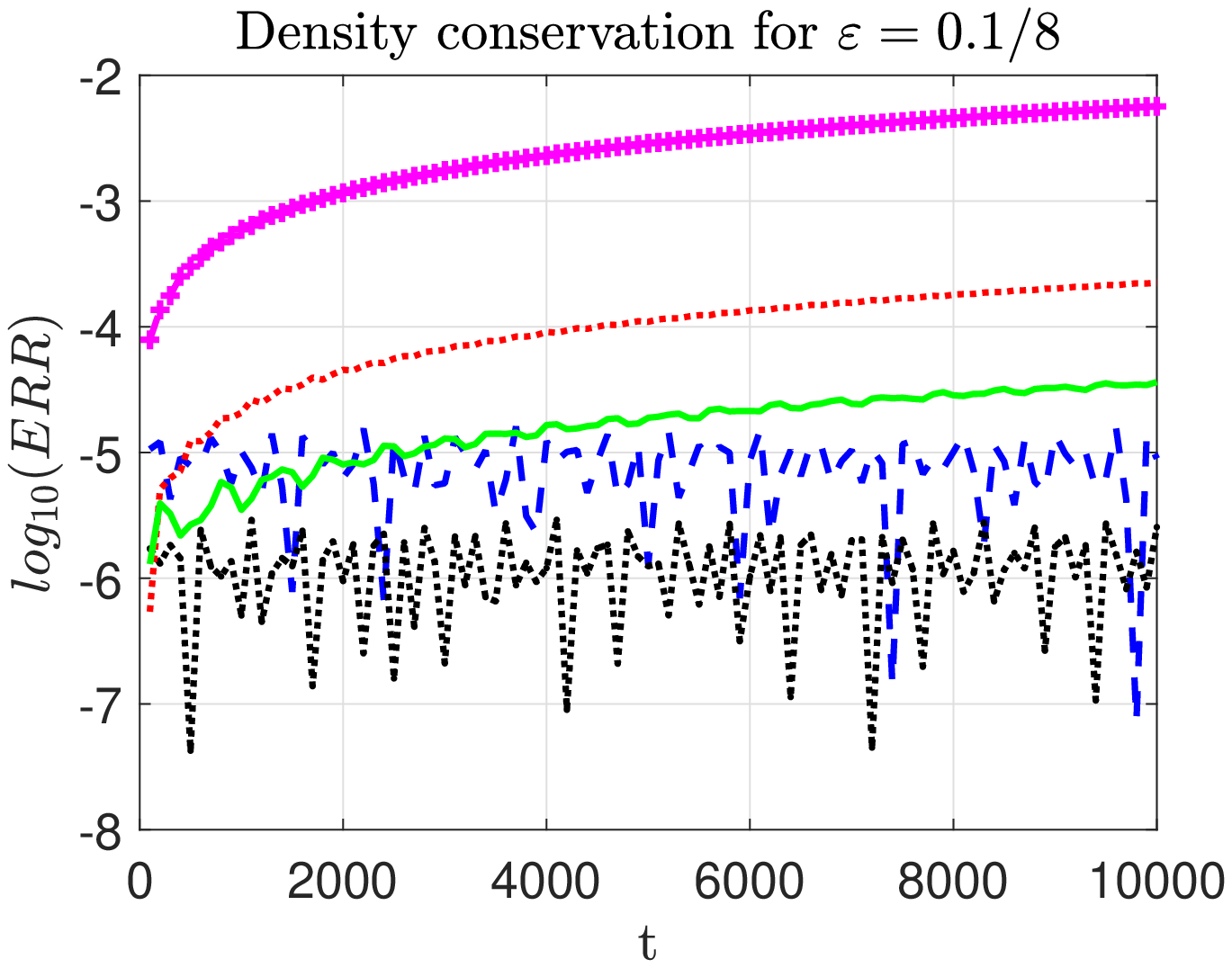}
\caption{The relative error of density against
$t$.}\label{fig21}
\end{figure}
 \begin{figure}[ptb]
\centering
\includegraphics[width=6.5cm,height=5.0cm]{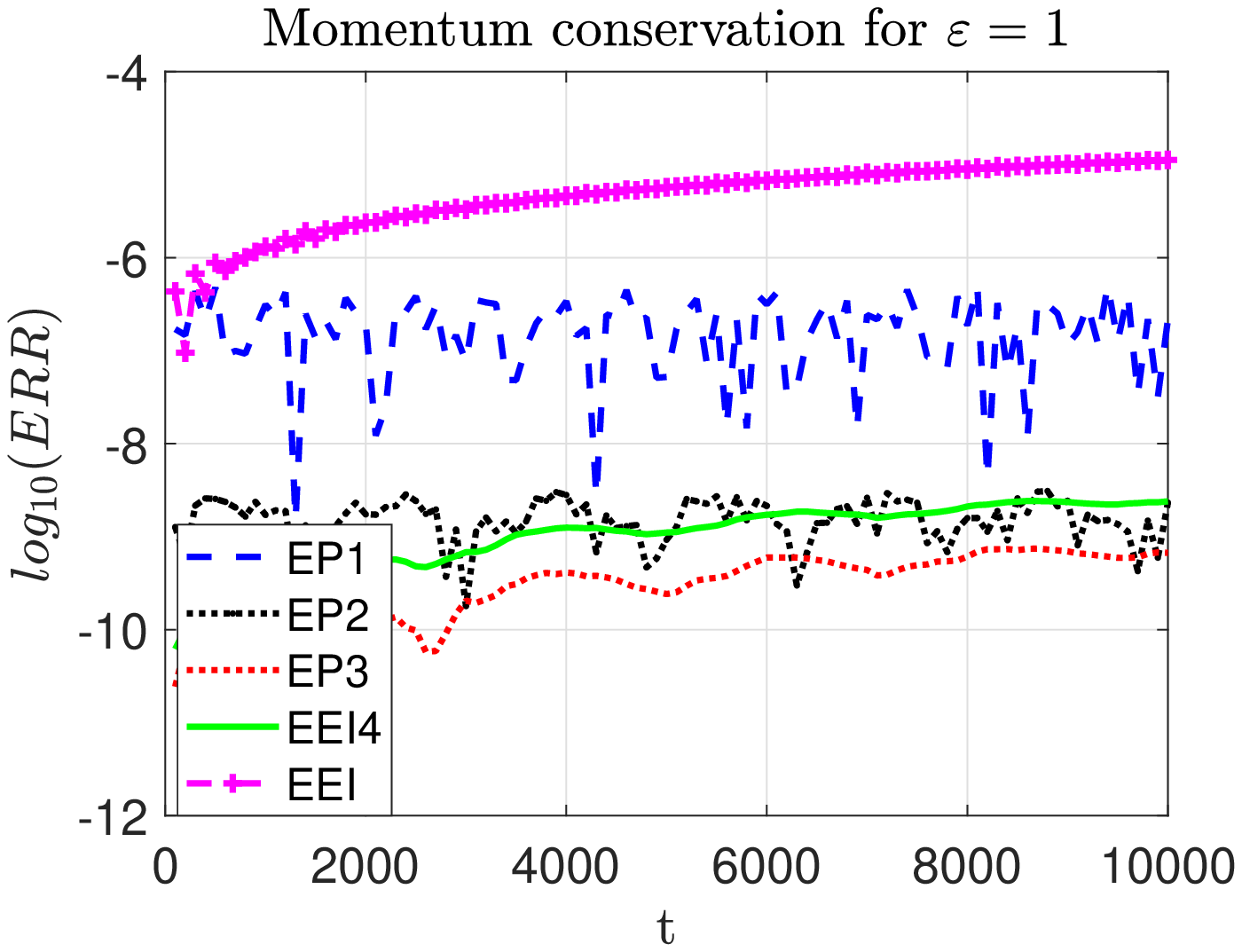}
\includegraphics[width=6.5cm,height=5.0cm]{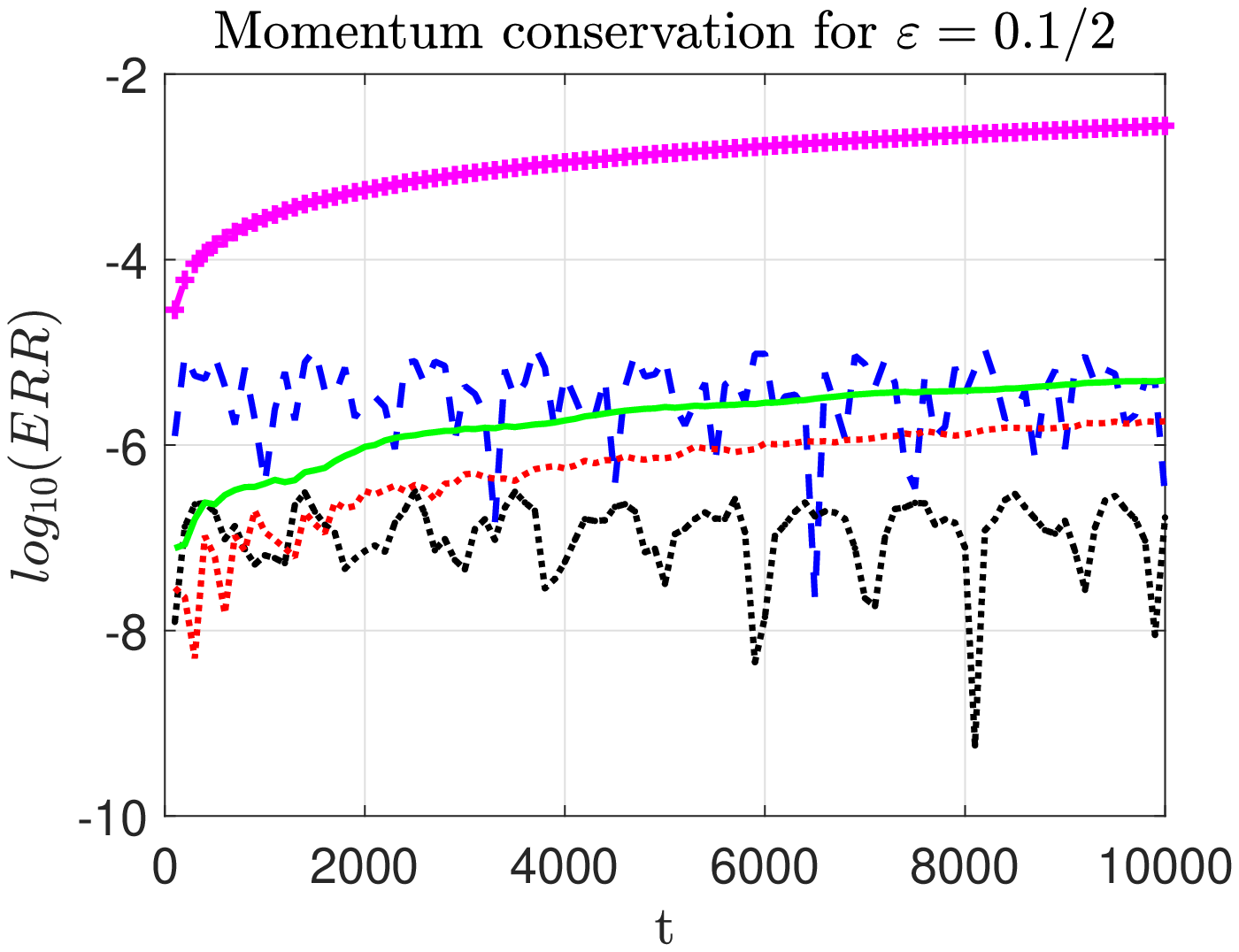}
\includegraphics[width=6.5cm,height=5.0cm]{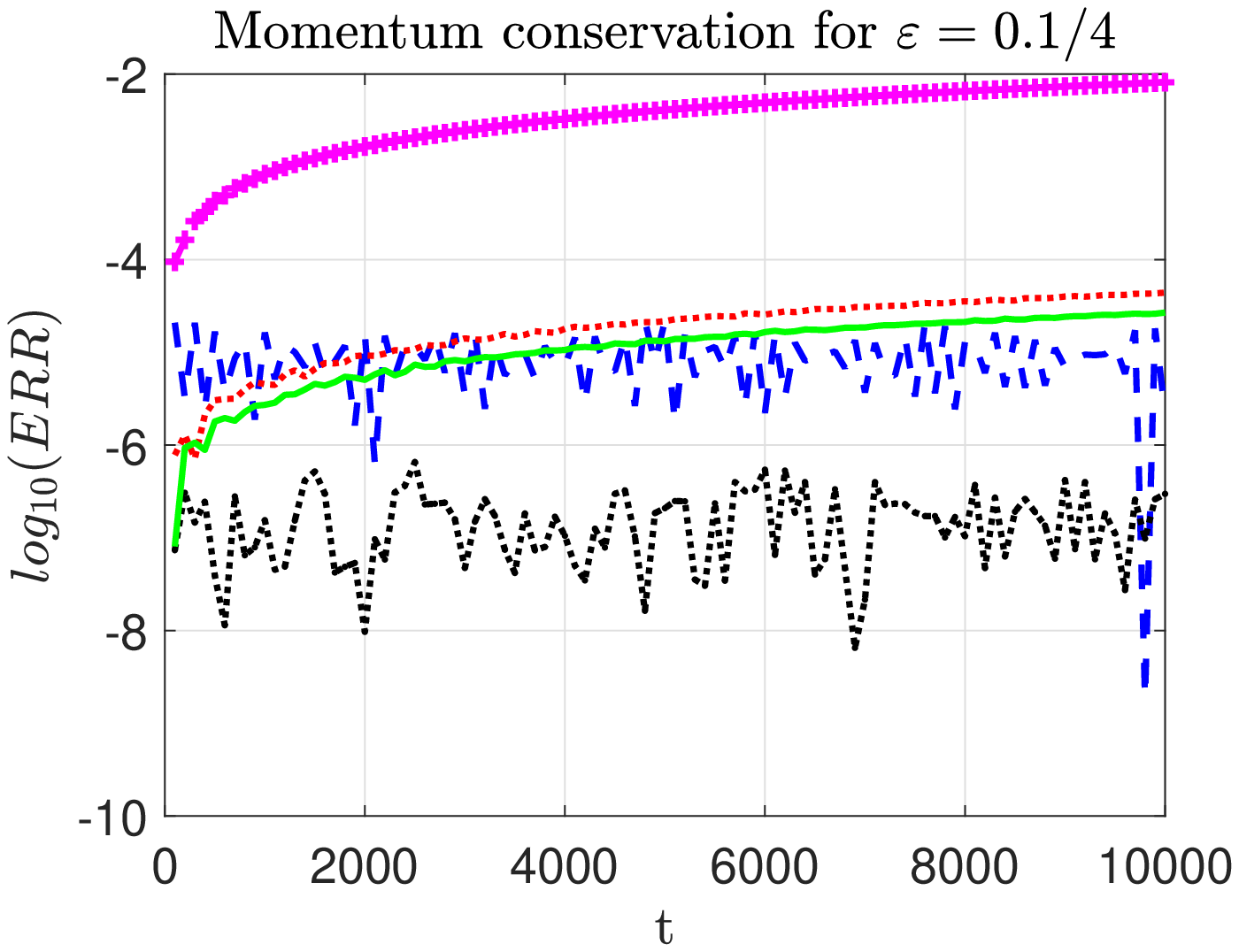}
\includegraphics[width=6.5cm,height=5.0cm]{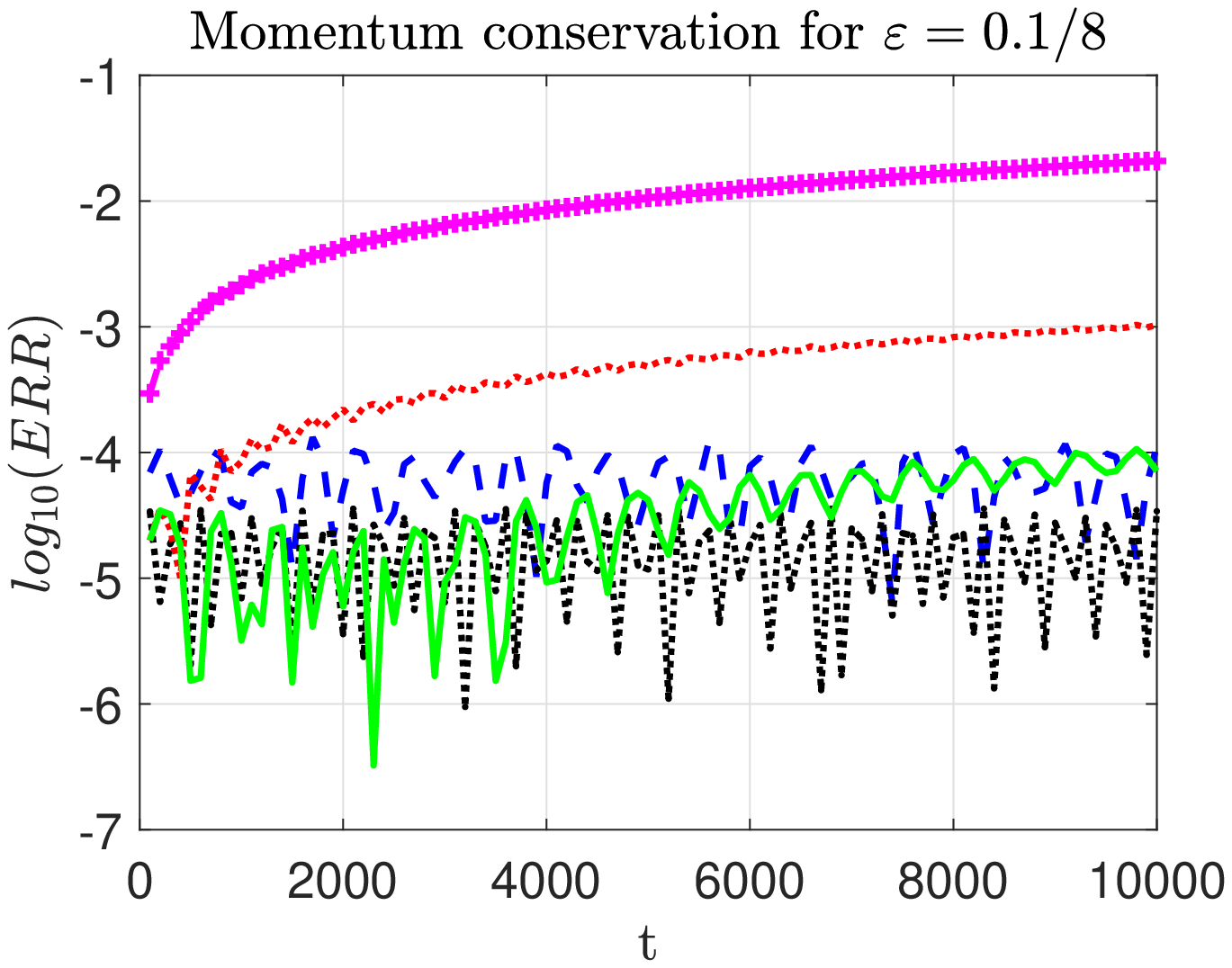}
\caption{The relative error of momentum against
$t$.}\label{fig31}
\end{figure}

{ \section{Applications and future issues}

This is a preliminary  research on the long-time behaviour of
 energy-preserving exponential integrators  and it is noted that
the algorithms  can be  extended to the numerical
solutions of the following equations (see Table \ref{appERKN}) by
replacing $\ii \mathcal{A}$ and $f$ in \eqref{sch ode} with the new ones.
\begin{table}
\newcommand{\tabincell}[2]{\begin{tabular}{@{}#1@{}}#2\end{tabular}}  %导言区
  \centering
  \begin{tabular}{|c|c|c|}
  \hline%%%%%%%%%%%%%
     \tabincell{c}{\text{Systems}}   & \text{Replace\ $\ii
\mathcal{A}$ by} & \text{New\ $f$} \\ %换行,单元格内的每个元素用\tabincell{c}{放表格内容}
\hline
   \tabincell{c}{\text{Hamiltonian system with} \\  $H(q,p)=\frac{1}{2}p^{\intercal}p+
\frac{1}{2} q^{\intercal}\Omega q+U (q)$}   & \text{$\left(
    \begin{array}{cc}
      0 & I \\
      -\Omega  & 0 \\
    \end{array}
  \right)$}    &\text{$\left(
                           \begin{array}{c}
                             0 \\
                          -\nabla U(q) \\
                           \end{array}
                         \right)$}\\ %换行,单元格内的每个元素用\tabincell{c}{放表格内容}
\hline
  \tabincell{c}{Wave equation\\ $u_{tt}-a^2\Delta u=g(u)$ } & $\left(
    \begin{array}{cc}
      0 & I \\
      -a^2\Delta  & 0 \\
    \end{array}
  \right)$ & $\left(
                           \begin{array}{c}
                             0 \\
                          g(u) \\
                           \end{array}
                         \right)$ \\
\hline
  \tabincell{c}{Damped Helmholtz-Duffing
oscillator\\ $q''+2\upsilon q'=-Aq-Bq^2-\varepsilon q^3$ } &$\left(
    \begin{array}{cc}
      0 & I \\
      0  & 2\upsilon \\
    \end{array}
  \right)$ & $\left(
                           \begin{array}{c}
                             0 \\
                          -Aq-Bq^2-\varepsilon q^3 \\
                           \end{array}
                         \right)$ \\
\hline
  \tabincell{c}{Charged-particle dynamics \\ in
a   constant magnetic field\\ $x''=\tilde{B} x'    +F(x)$} & $\left(
    \begin{array}{cc}
      0 & I \\
      0  & \tilde{B} \\
    \end{array}
  \right)$ & $\left(
                           \begin{array}{c}
                             0 \\
                          F(x) \\
                           \end{array}
                         \right)$ \\
\hline
  \tabincell{c}{First-order ODEs\\ $x'=\frac{1}{\varepsilon}A x    +f(x)$ } & $\frac{1}{\varepsilon}A$& $f(x)$ \\
\hline
\end{tabular}
%--------------------------------------------------------------------------------------------------
  \caption{Some systems which the presented methods can be applied.} \label{appERKN}
\end{table}

 We also note that there are some issues which
can be further considered.
\begin{itemize}\itemsep=-0.2mm

\item  %The construction and long-time behaviour of continuous-stage symplectic exponential integrators for  Schr\"{o}dinger equations will be discussed in another work.
The extensions of  the methods as well as their analysis in this
paper to the logarithmic Schr\"{o}dinger equation (\cite{Bao19}) and  time-dependent Schr\"{o}dinger equation
in semiclassical scaling (\cite{Lubich2020}) will   be researched
in future.

\item    The    long term analysis of other kinds of energy-preserving integrators
in other PDEs such as  Vlasov-Poisson system (\cite{Sonnend15,Sonnend14}) and Maxwell equations   will also be considered.

%\item For the analysis of near conservations, we only consider  methods with one-point quadrature formula in this section. The extension of
%the analysis to continuous-stage energy-preserving exponential
%integrators with higher-order quadrature formula is not obvious,
%which is an issue will be studied further.

\item Another issue for future exploration is the analysis of
parareal algorithms  of Schr\"{o}dinger equations.

\end{itemize}}

\section*{Appendix}
\subsection*{Appendix I. Proof of Proposition
\ref{pro: lem1}}

$\bullet$ Proof of the first result.

For the case that $(j, k)\in\mathcal{L}_{\tilde{\epsilon},M,h}$, we
have $\abs{ 2\ii \sin\big(\frac{1}{2}h(\omega_j-k
\cdot\omega)\big)}>\tilde{\epsilon}^{\frac{1}{2}}h$. Then from this it follows
that $ \abs{\frac{\sinc\big(\frac{1}{2}h\omega_j\big)}{2\ii
\sin\big(\frac{1}{2}h(\omega_j-k
\cdot\omega)\big)}}\leq\frac{1}{\tilde{\epsilon}^{\frac{1}{2}}h},$ which yields
$|||\WW^{-1}\vv|||_s\leq \tilde{\epsilon}^{-\frac{1}{2}}h^{-1}|||\vv|||_s.$
For  other $(j, k)$, the statement is obtained by
considering the definition of $\WW$.

$\bullet$ Proof of the second result. %([18] p 150 lemma2)

 Taking into account  the result proved in  \cite{18} $[[k^1]] +
[[k^2 ]] + [[k^3]]\geq \max([[k^1]],2)+1,$   one has
\begin{equation*}
\begin{aligned}
|||\FF(\vv)|||^2_s=&\sum\limits_{j}\abs{\omega_j}^s\Big(\sum\limits_{k}\tilde{\epsilon}
^{-\max([[k]],2)}\\
&\abs{\ii h \sum\limits_{k^1+k^2-k^3=k
}\int_{0}^{1}\tilde{\epsilon}^{[[k^1]] + [[k^2 ]] +
[[k^3]]}v^{k^1}_{j(k^1)}v^{k^2}_{j(k^2)}\overline{v^{k^3}_{j(k^3)}}d\sigma}\Big)^2\\
\leq &h^2\tilde{\epsilon}^2\sum\limits_{j}\abs{\omega_j}^s
\sum\limits_{k^1,k^2,k^3}\Big(\abs{
\int_{0}^{1}v^{k^1}_{j(k^1)}v^{k^2}_{j(k^2)}\overline{v^{k^3}_{j(k^3)}}d\sigma}\Big)^2\\
= &h^2\tilde{\epsilon}^2\norm{  \sum\limits_{k^1,k^2,k^3}
\int_{0}^{1}v^{k^1}_{j(k^1)}v^{k^2}_{j(k^2)}\overline{v^{k^3}_{j(k^3)}}d\sigma}^2
\leq C h^2\tilde{\epsilon}^2(|||\check{\mathbf{v}}|||^3_s) ^2.
\end{aligned}
\end{equation*}

$\bullet$ Proof of the last result. %([18] p 150 lemma2)

According to  \cite{18}, the following result is true
\begin{equation*}
\begin{aligned}
 &a_1a_2a_3- b_1b_2b_3
 =\sum\limits_{j=1}^3 2^{-j}(a_1+b_1)\cdots(a_{j-1}+b_{j-1})
 (a_{j}+b_{j})(a_{j+1}\cdots a_3+b_{j+1}\cdots b_3).
\end{aligned}
\end{equation*}
Then from this result and by a similar calculation to that for the
second result,  the last statement is arrived at.

  The same calculation is also true for $\hat{\vv}, \hat{\FF}$ and $|||\cdot|||_{\frac{d+1}{2}}$
instead of $\vv, \FF$ and $|||\cdot|||_{s}$, respectively.

\subsection*{Appendix II. Proof of Proposition
\ref{pro: f}}

$\bullet$ Proof of \eqref{bounds f12}.

 In the light of the choice of the initial iteration, we
 have
\begin{equation*}
\begin{aligned}
&|||\big[\mathbf{a}(\tilde{\tau})\big]^{0}|||_s\leq C,\
|||\big[\mathbf{a}^{(n)}(\tilde{\tau})\big]^{0}|||_s=0 \ \ \ \textmd{for}\ \
\ n\geq1,\\
&|||\big[\mathbf{b}^{(n)}(\tilde{\tau})\big]^{0}|||_s=0 \ \ \ \textmd{for}\
\ \ n\geq0.
\end{aligned}
\end{equation*}
From the third equality  of \eqref{variables iteration}, it follows
that %(see [18], p. $150$)
\begin{equation*}
\begin{aligned}
|||\big[\mathbf{a}(0)\big]^{l+1}|||_s&=\Big(\sum\limits_{j}\abs{\omega_j}^s\abs{\big[\mathbf{a}_j^{\langle
j\rangle}(0)\big]^{l+1}}^2\Big)^{1/2} \leq
\tilde{\epsilon}^{-1}\norm{u(0)}_s+ \tilde{\epsilon}\norm{\big[\bb(0)\big]^{l}}_s.
\end{aligned}
\end{equation*}
According to the first  equality  of \eqref{variables iteration}, we
have
\begin{equation*}
\begin{aligned}
|||\big[\mathbf{b}^{(n)}\big]^{l+1}|||_s &\leq
 |||\big[\Omega^{-1}\BB(\bb)^{(n)}\big]^{l}|||_s+|||\big[\Omega^{-1}\FF(\vv^{l})\big]^{(n)}|||_s\\
 &\leq\tilde{\epsilon}^{\frac{1}{2}}|||\big[\bb^{(n+1)}\big]^{l}|||_s+h^{-1}\tilde{\epsilon}^{-\frac{1}{2}}|||\big[\FF(\vv^{l})\big]^{(n)}|||_s.
\end{aligned}
\end{equation*}
With the second  equality  of \eqref{variables iteration}, it is
 deduced that
\begin{equation*}
\begin{aligned}
|||\big[\mathbf{a}^{(n+1)}\big]^{l+1}|||_s &\leq
 |||\big[\frac{\sinc(\frac{1}{2}h\Omega)}{h\tilde{\epsilon}}\AA(\aa)^{(n)}\big]^{l}|||_s+ |||\big[\frac{\sinc(\frac{1}{2}h\Omega)}{h\tilde{\epsilon}}\FF(\vv^{l})\big]^{(n)}|||_s\\
&\leq
h^{2}\tilde{\epsilon}^{2}|||\big[\aa^{(n+2)}\big]^{l}|||_s+h^{-1}|||\big[\FF(\vv^{l})\big]^{(n)}|||_s,\ \ \ l=0,1,\ldots,\\
|||\big[\mathbf{a}\big]^{l+1}|||_s &\leq
 |||\big[\aa(0)\big]^{l+1}|||_s+\sup_{\tilde{\tau}}|||\big[\dot{\aa}(\tilde{\tau})\big]^{l+1}|||_s.
\end{aligned}
\end{equation*}
By Proposition \ref{pro: lem1} and the same analysis as that
described in Section 3.6 of \cite{18}, the result \eqref{bounds f12}
can be proved.

$\bullet$ Proof of \eqref{O}. %([18] p151)

For
$\tilde{u}=[\tilde{u}]^L=\sum\limits_{k}[z^{k}]^L\mathrm{e}^{-\ii(k
\cdot\omega) t}$ with $L$ the number of ending iterate
and by the same calculations as those presented in \cite{18}, one
has
\begin{equation*}
\begin{aligned}
\norm{\tilde{u}}^2_s
&=\sum\limits_{j}\abs{\omega_j}^s\abs{\sum\limits_{k}[z_j^{k}]^L\mathrm{e}^{-\ii(k
\cdot\omega) t}}^2 \leq \tilde{\epsilon}^2
\sum\limits_{j}\abs{\omega_j}^s\Big(\sum\limits_{k}\abs{[c_j^{k}]^L}\Big)^2=\tilde{\epsilon}^2
|||[\cc]^{L}|||_s^2,
\end{aligned}
\end{equation*}
which proves \eqref{O}.

$\bullet$ Proof of \eqref{OO}.% ([18] p151)

We now turn to the size of the variables $\hat{\aa}$ and $\hat{\bb}$
in the second rescaling, and we have
$|||\hat{\aa}|||_{\frac{d+1}{2}}=|||\aa|||_{s},
|||\hat{\bb}|||_{\frac{d+1}{2}}=|||\bb|||_{s}.$ Then from this fact
and the above analysis, \eqref{OO} is obtained.

\subsection*{Appendix III. Proof of Proposition
\ref{pro: defect}}

$\bullet$ Proof of the first result. %([18] p152)

In order to estimate $\mathbf{f}$, the nonresonance condition
\eqref{non-resonance cond} and Proposition \ref{pro: lem1} are
considered.  Under these conditions and for $l = 0,\ldots,L$, one
has
\begin{equation*}
\begin{aligned}
|||[\mathbf{f}]^{l}|||^2_s=&\sum\limits_{j}\abs{\omega_j}^s\Big(\sum\limits_{k:
(j, k)\in\mathcal{R}_{\tilde{\epsilon},M,h}}\abs{[f_j^{k}]^{l}}\Big)^2\\
%=&\sum\limits_{j}\abs{\omega_j}^s\Big(\sum\limits_{k:
%(j, k)\in\mathcal{R}_{\tilde{\epsilon},M,h}}\tilde{\epsilon}^{\max([[k]],2)}\abs{[\FF(\mathbf{u})_j^{k}]^{l}}\Big)^2\\
=&\sum\limits_{j}\abs{\omega_j}^{\frac{d+1}{2}}\Big(\sum\limits_{k:
(j,
k)\in\mathcal{R}_{\tilde{\epsilon},M,h}}\frac{\abs{\omega_j}^{\frac{2s-d-1}{4}}\tilde{\epsilon}^{\max([[k]],2)}}{\abs{\omega^{\frac{2s-d-1}{4}\abs{k}}}}
\abs{[\hat{\FF}(\hat{\mathbf{u}})_j^{k}]^{l}}\Big)^2\\
\leq &
|||[[\hat{\FF}(\hat{\mathbf{u}})]^n|||_{\frac{d+1}{2}}^2\sup_{(j,
k)\in\mathcal{R}_{\tilde{\epsilon},M,h}}\Big(\frac{\abs{\omega_j}^{\frac{2s-d-1}{4}}}{\abs{\omega^{\frac{2s-d-1}{4}\abs{k}}}}
\tilde{\epsilon}^{[[k]]} \Big)^2 \\
\leq & C (h\tilde{\epsilon})^2\tilde{\epsilon}^{2N+4}=Ch^2(\tilde{\epsilon}^{N+3})^2.
\end{aligned}
\end{equation*}

$\bullet$ Proof of the second result. %([18] p152)

 From $\norm{k}>K$, it follows that $[[k]]\geq(K+2)/2=N+2$.  With the same arguments as those given in the proof of Proposition \ref{pro: lem1}, we
  obtain \begin{equation*}
\begin{aligned}
&\norm{\sum\limits_{ \norm{k}>K}[g^{k}]^{l}}_s\\
=&
\norm{\sum\limits_{\norm{k}>K}\tilde{\epsilon}^{[[k]]}\Big[\tilde{\epsilon}^{-[[k]]}\sum\limits_{k^1+k^2-k^3=k
}\int_{0}^{1}\tilde{\epsilon}^{[[k^1]] + [[k^2 ]] +
[[k^3]]}w^{k^1}_{j(k^1)}w^{k^2}_{j(k^2)}\overline{w^{k^3}_{j(k^3)}}d\sigma\Big]}_s\\
\leq & C\tilde{\epsilon}^{\frac{K+2}{2}}h\tilde{\epsilon}=C\tilde{\epsilon}^{N+3}h.
\end{aligned}
\end{equation*}

$\bullet$ Proof of the third and fourth results. %([12] p894)

The  off-diagonal part $\mathbf{e}$   and the diagonal part
$\dot{\mathbf{h}}$ of the defect can be expressed respectively by
\begin{equation*}
\begin{aligned}
&[e_j^{k}]
^{l}=\tilde{\epsilon}^{[[k]]}\big([(\WW\bb)_j^{k}]^{l}-[(\WW\bb)_j^{k}]^{l+1}\big),\
\ [h_j^{k}]
^{l}=\tilde{\epsilon}^{3/2}\big([(\WW\aa)_j^{k}]^{l}-[(\WW\aa)_j^{k}]^{l+1}\big).
\end{aligned}
\end{equation*}
Using a Lipschitz estimate given in Proposition \ref{pro: lem1}
for the nonlinearity and by an analysis of the iteration used as in
Sect. 5.7 of \cite{19}, it is obtained that
\begin{equation*}
\begin{aligned}
&|||[ \mathbf{h}(\tilde{\tau})]^{l}|||_s\leq C \tilde{\epsilon}{\frac{p+4}{2}}h,\
|||[ \mathbf{h}^{(n)}(\tilde{\tau})]^{l}|||_s\leq C
\tilde{\epsilon}{\frac{p+4}{2}}h,\ \
l\geq1,\\
&|||[ \mathbf{e}^{(n)}(\tilde{\tau})]^{l}|||_s\leq C
\tilde{\epsilon}{\frac{p+4}{2}}h,\ \ l\geq0
\end{aligned}
\end{equation*}
for $0\leq\tilde{\tau}\leq 1.$

$\bullet$ Proof of the last result.

The last result can follows from  the same arguments as
 the description of (29) in \cite{19}.

 \section*{Acknowledgements}
The authors are grateful to  Christian Lubich for his helpful
comments and discussions on the topic of modulated Fourier
expansions. We also thank Xinyuan Wu and Changying Liu for their
valuable comments. The research is supported in part by the NNSF of
China (11871393).  This work was done in part at UNIVERSITAT T\"{U}BINGEN when the first author worked there as a postdoctoral researcher (2017-2019, supported by the Alexander von Humboldt Foundation).

\end{document}